\documentclass{article}
\usepackage[utf8]{inputenc}
\usepackage{amsfonts}
\usepackage{amsmath}
\usepackage{amssymb}
\usepackage{amsthm}
\usepackage{xcolor}
\usepackage{bm}
\usepackage{mathtools}
\usepackage{enumitem}
\usepackage{verbatim}
\usepackage{pinlabel}
\usepackage{caption}
\usepackage{subcaption}
\usepackage{tikz}
\usepackage{graphicx}
\usepackage{hyperref}
\usepackage{tikz-cd}
\usepackage{float}
\usepackage{geometry}
\usepackage{soul}

\usepackage[rightcaption]{sidecap}

\DeclarePairedDelimiter\floor{\lfloor}{\rfloor}

\newtheorem{theorem}{Theorem}[section]
\newtheorem{lemma}[theorem]{Lemma}
\newtheorem{proposition}[theorem]{Proposition}

\newtheorem{corollary}[theorem]{Corollary}
\newtheorem{question}[theorem]{Question}

\theoremstyle{definition}
\newtheorem*{example}{Example}

\newtheorem*{remark}{Remark}

\DeclareMathOperator{\interior}{int}

\DeclareMathOperator{\Homeo}{Homeo}

\DeclareMathOperator{\Diffeo}{Diff}

\DeclareMathOperator{\sign}{sign}

\DeclareMathOperator{\ann}{Ann}

\DeclareMathOperator{\supp}{supp}
\DeclareMathOperator{\Star}{star}
\DeclareMathOperator{\coo}{c_{00}}

\newcommand{\fine}{\mathcal{C}^\dagger}
\newcommand{\finearc}{\mathcal{A}^\dagger}
\newcommand{\fiber}{\fine_{\alpha}}
\newcommand{\surf}{S_{g,b}}
\newcommand{\ourfine}{\mathcal{C}_{\mathcal{A}}^\dagger}

\newcommand{\p}[1]{\medskip\noindent\textbf{#1}\textbf{.}}

\newcommand{\pit}[1]{\medskip\noindent\textit{#1}\textit{.}}

\newcommand{\G}{\mathcal{G}}

\newcommand{\dfiber}{d_{\mathcal{C}^{\dagger}_{\alpha}}}

\title{Large flats in large subgraphs of fine curve graphs}
\author{Ryan Dickmann and Roberta Shapiro}
\date{\today}

\begin{document}

\maketitle

\begin{abstract}
    The fine curve graph of a surface is a graph whose vertices are essential simple closed curves and whose edges connect disjoint curves. Following a rich history of hyperbolicity of various graphs associated to surfaces, the fine curve graph was shown to be hyperbolic by Bowden–Hensel–Webb, while the curve graph, obtained from the fine curve graph by collapsing subgraphs corresponding to isotopy classes, was first proven to be hyperbolic by Masur--Minsky.
    
    We show that certain large subgraphs of fine curve graphs, including fibers over a vertex of the curve graph, are not hyperbolic. Indeed, such graphs contain flats of every finite dimension.

    We then compute bounds on distances in fibers over a vertex of the curve graph, which we call single-isotopy-class fine curve graphs.
    
\end{abstract}

\section{Introduction}

The \textit{fine curve graph} of an orientable surface $S_{g,b}$ with $g$ genus and $b$ boundary components, denoted $\fine(S_{g,b}),$ is the graph whose vertices are essential nonperipheral simple closed curves in $S_{g,b}$ and edges connect pairs of disjoint curves. The fine curve graph was originally introduced by Bowden--Hensel--Webb (with the restriction that vertices are smooth curves) to study $\Diffeo_0(S_{g,0})$ \cite{Bowden_Hensel_Webb_2021}. In particular, they showed that its space of quasimorphisms of $\Diffeo_0(S_{g,0})$ is infinite-dimensional, and, as a corollary, that $\Diffeo_0(S_{g,0})$ is not uniformly perfect. These results also apply to $\Homeo_0(S_{g,0})$ and the fine curve graph as we defined it above.

The fine curve graph differs from the usual curve graph, $\mathcal{C}(\surf)$, which has as its vertices \emph{isotopy classes of} curves. In the cases that $(g,b)=(1,0),\ (1,1), \text{ or } (0,4),$ we have that $\fine(\surf)$ is disconnected since any essential simple closed curves in distinct isotopy classes must intersect. When $g=0$ and $b<4$ there are no essential curves, so the fine curve graph is empty. Therefore, we will not consider these cases going forward.

Let $\mathcal{A}$ be a set of isotopy classes of essential nonperipheral simple closed curves in $\surf.$ Define $\fine_{\mathcal{A}}(\surf)$ to be the subgraph of $\fine(\surf)$ induced by all curves in the isotopy classes in $\mathcal{{A}}.$ When $\mathcal{A}$ consists of a single isotopy class $[\alpha]$, we instead denote the corresponding subgraph $\fiber(\surf)$. We will refer to $\fiber(\surf)$ as a \emph{single-isotopy-class fine curve graph}. By a similar proof to Long--Margalit--Pham--Verberne--Yao \cite[Section 3]{LMPVY}, we have that $\fine_{\mathcal{A}}(\surf)$ is connected if and only if the subgraph of $\mathcal{C}(\surf)$ induced by $\mathcal{A}$ is connected. We will assume henceforth that $\mathcal{A}$ is chosen such that it induces a connected subgraph of $\mathcal{C}(\surf)$.

Bowden--Hensel--Webb showed that $\fine(S_{g,0})$ is infinite diameter \cite{Bowden_Hensel_Webb_2021}.
In fact, there are isotopic curves that are arbitrarily far apart, so a similar argument shows that $\fine_{\mathcal{A}}(\surf)$ is infinite diameter. Additionally, Bowden--Hensel--Webb prove that $\fine(S_{g,0})$ is Gromov hyperbolic \cite{Bowden_Hensel_Webb_2021}. Contrary to this, using ideas related to witness subsurfaces and subsurface projections, we show that many $\fine_{\mathcal{A}}(\surf)$ are \emph{not} Gromov hyperbolic. 

Let $\coo$ be the space of real sequences that are eventually 0 endowed with the metric induced by the $\ell^\infty$ (supremum) norm. For a graph $G,$ define the \emph{star} of a vertex $v$ of $G,$ denoted $\Star(v)$, to be $v$ along with all of its neighbors.

\begin{theorem}\label{thm:linfty}
        Let $\surf$ be an orientable surface with $g\geq 1$ or $b\geq 4.$ Let $\beta$ be an essential nonperipheral simple closed curve in $\surf$ and let $\mathcal{B}$ be the set of isotopy classes corresponding to vertices of $\mathcal{C}(\surf)-\Star{\beta}$. Let $\mathcal{A}\subseteq \mathcal{B}$ be a set of isotopy classes such that the subgraph of $\mathcal{C}(\surf)$ induced by $\mathcal{A}$ is connected.
 Then there is a quasi-isometric embedding of $\coo$ into $\mathcal{C}_{\mathcal{A}}^{\dagger}(\surf)$.
 \end{theorem}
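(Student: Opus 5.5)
We construct the embedding by making local modifications of a fixed curve near $\beta$, one modification in each of infinitely many disjoint small regions along $\beta$. Every curve in $\mathcal{A}$ meets $\beta$ essentially, because $\mathcal{A}$ avoids $\Star(\beta)$. So each such curve must cross a thin annulus around $\beta$, and this annulus serves as a witness for all of $\mathcal{C}_{\mathcal{A}}^{\dagger}$. Inside the annulus we can twist independently in disjoint sub-bands.

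\textbf{Setup.} Fix a curve $\alpha$ with $[\alpha]\in\mathcal{A}$ in minimal position with $\beta$, and an arc $a\subset\alpha$ crossing a closed annular neighborhood $N$ of $\beta$. Choose infinitely many pairwise disjoint closed disks $D_1,D_2,\dots\subset N$, each meeting $\alpha$ in a single subarc of $a$. In each $D_i$ choose disjoint parallel arcs $c_i$ and $c_i'$ of $\beta$ type: take $N_i\subset N$ to be a thin sub-annulus parallel to $\beta$, and let the $N_i$ be disjoint. For $x=(x_1,x_2,\dots)\in\coo$, let $\alpha_x$ be obtained from $\alpha$ by applying $T_{N_i}^{\lfloor x_i\rfloor}$, a Dehn twist supported in the thin annulus $N_i$, for every $i$. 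Only finitely many twists are nontrivial, so $\alpha_x$ is a genuine curve in the isotopy class $[\alpha]$, and it is a vertex of $\mathcal{C}_{\mathcal{A}}^{\dagger}$. The map $x\mapsto\alpha_x$ is coarsely defined, and we aim to show
\[
\tfrac{1}{K}\,\|x-y\|_\infty - C\;\le\; d(\alpha_x,\alpha_y)\;\le\; K\,\|x-y\|_\infty + C .
\]

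\textbf{Upper bound.} Let $n=\max_i\lceil|x_i-y_i|\rceil$. We move from $\alpha_x$ to $\alpha_y$ by changing every coordinate by at most one twist at each step, all steps done simultaneously. So it suffices to show that curves differing by at most one twist in each $N_i$ (in arbitrarily many $N_i$ at once) lie at uniformly bounded distance in $\mathcal{C}_{\mathcal{A}}^{\dagger}$. Two such curves are isotopic and cross $N$ in controlled position relative to each other. Push $\alpha_x$ off itself, away from $N$, and then route a curve of an adjacent class in $\mathcal{A}$ through the complement. This should give a path of bounded length; if needed we use connectivity of $\mathcal{A}$ to pick an intermediate class. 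The construction must be uniform in the number of sub-annuli, which is where care is needed.

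\textbf{Lower bound (main obstacle).} We need a Lipschitz "projection" to each $N_i$. Every vertex $\gamma\in\mathcal{C}_{\mathcal{A}}^{\dagger}$ meets $\beta$, hence crosses every $N_i$. Define $\pi_i(\gamma)$ as the set of arcs of $\gamma\cap N_i$, viewed in the arc complex of the annulus $N_i$ (with fixed boundary points, as in annular subsurface projection). Disjoint curves give disjoint arc systems, so $\pi_i$ is $1$-Lipschitz up to a constant. Also $\pi_i(\alpha_x)$ and $\pi_i(\alpha_y)$ differ by roughly $|x_i-y_i|$ twists. Hence
\[
d(\alpha_x,\alpha_y)\ge \max_i |x_i-y_i| - C .
\]
The difficulty is that the arc complex of a topological annulus with non-fixed arcs is not naturally defined, since fine curves can wiggle wildly. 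We will handle this by measuring twisting relative to the fixed boundary components of $N_i$ after isotopy rel $\partial N_i$. A cleaner way is to work in the annular cover associated to $N_i$, or to use algebraic intersection with a fixed transverse arc $\tau_i$ of $N_i$ with endpoints on $\partial N_i$. If $\gamma$ and $\gamma'$ are disjoint, the winding numbers of their crossing arcs in $N_i$ differ by at most one. This uses that the hypothesis $[\gamma]\notin\Star(\beta)$ forces every arc to cross fully. That Lipschitz estimate is the key lemma, and it is where the hypothesis $\mathcal{A}\subseteq\mathcal{B}$ is used.
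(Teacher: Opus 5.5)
Your overall architecture matches the paper's: witness annuli around $\beta$, one coordinate per sub-annulus, a Lipschitz projection for the lower bound, and reducing the upper bound to simultaneous single steps. However, the construction breaks down at the first step. A Dehn twist $T_{N_i}$ in an annulus whose core is isotopic to $\beta$ is not isotopic to the identity. All the $N_i$ have cores isotopic to $\beta$, so $\prod_i T_{N_i}^{n_i}$ is isotopic to $T_\beta^{\sum_i n_i}$. Since $i(\alpha,\beta)>0$, we have $i(T_\beta^k\alpha,\alpha)=|k|\,i(\alpha,\beta)^2\neq 0$ for $k\neq 0$. Hence $\alpha_x$ lies in $[\alpha]$ only when $\sum_i\lfloor x_i\rfloor=0$. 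In general it is not a vertex of $\ourfine(\surf)$ at all: take $\mathcal{A}=\{[\alpha]\}$, the case singled out in Corollary~\ref{maincorollary}.

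For the same reason, measuring twisting in the annular cover of $\beta$ cannot separate the coordinates. That projection depends (coarsely) only on the isotopy class of the curve, so it sees only $\sum_i n_i$, and nothing within a single class.

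What the theorem needs is a homeomorphism of each witness annulus that is isotopic to the identity \emph{rel boundary} and still moves arcs linearly far in the fine arc graph of that annulus. The paper gets this from a point-pushing map that is pseudo-Anosov relative to at least three marked points (Proposition~\ref{prop:hypactiononfinearcgraph}, via Disarlo, Kra, and a Bowden--Hensel--Webb-type comparison with the surviving arc graph). Your winding-number idea can be repaired in exactly this form. Replace $T_{N_i}$ by a balanced pair $T_{N_i}T_{N_i'}^{-1}$ of opposite twists in two parallel sub-annuli of $A_i$; this is isotopic to the identity rel $\partial A_i$. Then apply your estimate in $N_i$ alone: disjoint arcs crossing $N_i$ have real-valued windings, measured in the universal cover of $N_i$, that differ by less than $1$. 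Concretely, use it as a projection to $\finearc(N_i)$ of those components of $\gamma\cap N_i$ that cross $N_i$. The hypothesis $\mathcal{A}\subseteq\mathcal{B}$ guarantees at least one such component, not that every arc crosses. This would be a more elementary substitute for Proposition~\ref{prop:hypactiononfinearcgraph}.

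The upper bound is also missing, and the mechanism you sketch cannot work. Every vertex of $\ourfine(\surf)$ must cross every $N_i$ (this is exactly the witness property your lower bound uses). So nothing can be pushed ``away from $N$'' or routed through its complement. Connectivity of $\mathcal{A}$ in $\mathcal{C}(\surf)$ also gives no bound independent of how many sub-annuli move at once.

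The missing ingredient is a path of fixed length $d$ from $\alpha$ to $\varphi(\alpha)$ inside a single annulus. Its vertices are multiarcs whose endpoints alternate between two fixed finite sets $\theta,\sigma\subset S^1$ on the boundary. Because the endpoints are fixed, the same path can be run simultaneously in all the $A_i$. Outside, it is glued to $\alpha$ at $\theta$-steps or to a disjoint parallel copy $\alpha'$ at $\sigma$-steps. This gives $d_{\ourfine(\surf)}(\alpha,\Phi(\alpha))\le d\cdot\max_i|n_i|$, and it is the paper's multiarc graph $\mathcal{G}$ with Lemma~\ref{lemma:connectivityofextragraph}. That lemma uses a homotopy rel $\partial A_i$ from $\alpha$ to $\varphi(\alpha)$, and the components of $\mathcal{G}$ are distinguished by twisting. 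So this step again fails for a single Dehn twist and works only for maps isotopic to the identity rel boundary.
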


 The following is an immediate consequence of Theorem~\ref{thm:linfty}.

\begin{corollary}\label{maincorollary}
    Let $\surf$ and $\mathcal{A}$ be as in the statement of Theorem~\ref{thm:linfty}. Then $\mathcal{C}_{\mathcal{A}}^{\dagger}(\surf)$ contains a quasi-isometrically embedded copy of $\mathbb{Z}^N$ for all positive integers $N.$ Thus $\fine_{\mathcal{A}}(\surf)$ is not Gromov hyperbolic. In particular, the subgraph of $\fine(\surf)$ induced by a single isotopy class is not hyperbolic. 
\end{corollary}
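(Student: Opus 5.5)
The plan is to compose the embedding of Theorem~\ref{thm:linfty} with an explicit embedding of $\mathbb{Z}^N$ into $\coo$, and then use the fact that hyperbolicity pulls back along quasi-isometric embeddings of geodesic spaces.

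\textbf{Step 1: $\mathbb{Z}^N$ into $\coo$.} Fix $N\geq 1$. Consider the map $\iota\colon\mathbb{Z}^N\to\coo$ sending $(n_1,\dots,n_N)$ to the sequence $(n_1,\dots,n_N,0,0,\dots)$.
\begin{itemize}
\item This map is an isometry from $(\mathbb{Z}^N,\ell^\infty)$ onto its image.
\item On $\mathbb{Z}^N$ one has $\|x\|_\infty\le \|x\|_1\le N\|x\|_\infty$.
\item Hence $\iota$ is a bi-Lipschitz, and in particular quasi-isometric, embedding of $\mathbb{Z}^N$ with its standard word (i.e.\ $\ell^1$) metric.
\end{itemize}
Let $\Phi\colon\coo\to\fine_{\mathcal{A}}(\surf)$ be the quasi-isometric embedding from Theorem~\ref{thm:linfty}. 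The composition of quasi-isometric embeddings is a quasi-isometric embedding, so $\Phi\circ\iota$ quasi-isometrically embeds $\mathbb{Z}^N$ into $\fine_{\mathcal{A}}(\surf)$. This proves the first assertion.

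\textbf{Step 2: non-hyperbolicity.} Apply Step 1 with $N=2$, and argue by contradiction. Suppose $\fine_{\mathcal{A}}(\surf)$ were $\delta$-hyperbolic; it is a connected graph, hence a geodesic space.
\begin{itemize}
\item Let $f$ be a $(\lambda,c)$-quasi-isometric embedding of the Cayley graph of $\mathbb{Z}^2$, a geodesic space, into $\fine_{\mathcal{A}}(\surf)$.
\item Take a geodesic triangle in $\mathbb{Z}^2$. Its image under $f$ is a quasi-geodesic triangle.
\item By the Morse lemma, each side of the image lies within uniform Hausdorff distance $R=R(\delta,\lambda,c)$ of a geodesic with the same endpoints. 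So the image triangle is $(\delta+2R)$-thin.
\item Pulling back through $f$, the original triangle in $\mathbb{Z}^2$ is $\delta'$-thin for a constant $\delta'$ depending only on $\delta,\lambda,c$.
\end{itemize}
This contradicts the fact that $\mathbb{Z}^2$ is not hyperbolic. For instance, the triangle with vertices $(0,0)$, $(n,0)$, $(n,n)$, whose sides are $L$-shaped staircase geodesics, has a point at distance about $n/2$ from the other two sides. (Equivalently, one can cite the standard fact that a geodesic space quasi-isometrically embedded in a hyperbolic geodesic space is itself hyperbolic.)

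\textbf{Step 3: the single-isotopy-class case.} Take $\mathcal{A}=\{[\alpha]\}$. Its induced subgraph in $\mathcal{C}(\surf)$ is a single vertex, which is connected. It remains to find $\beta$ with $[\alpha]\notin\Star(\beta)$, that is, $\beta$ neither isotopic to nor disjoint from $\alpha$.

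Since $g\ge1$ or $b\ge4$, the complement of $\alpha$ is not a union of pairs of pants. More concretely, $\alpha$ lies in a one-holed torus or a four-holed sphere subsurface, and there it has an essential curve $\beta$ with nonzero geometric intersection number with $\alpha$. Alternatively, one can use that $\mathcal{C}(\surf)$ has infinite diameter to pick $\beta$ at distance at least $2$ from $[\alpha]$. Then $[\alpha]\in\mathcal{B}$, and Steps 1 and 2 apply.

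The only step requiring any real input beyond bookkeeping is Step 2, and it is the standard Morse-lemma argument. So I expect no genuine obstacle: all the difficulty is contained in Theorem~\ref{thm:linfty}.
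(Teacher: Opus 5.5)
Your proposal is correct and follows the paper's route. The paper simply calls the corollary an immediate consequence of Theorem~\ref{thm:linfty}, and your Steps 1--3 are the routine details it leaves implicit: restricting to $\mathbb{Z}^N\subset\coo$, pulling hyperbolicity back along a quasi-isometric embedding of a geodesic space via the Morse lemma, and checking that $\{[\alpha]\}\subseteq\mathcal{B}$ for some $\beta$.

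One small slip in Step 3: the claim that the complement of $\alpha$ is not a union of pairs of pants is false (e.g.\ in $S_{0,4}$ or $S_{1,1}$). It is not needed, though, since your next sentence is the correct justification: every essential nonperipheral $\alpha$ meets some curve $\beta$ with $i(\alpha,\beta)\neq 0$.
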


Corollary~\ref{maincorollary} implies that $\mathcal{C}_{\mathcal{A}}^{\dagger}(\surf)$ cannot quasi-isometrically embed into $\fine(\surf)$; in particular, the inclusion is not a quasi-isometric embedding. We further have that although $\fine(S_{g,b})$ is hyperbolic, it can be expressed as a (non-disjoint) union of two connected non-hyperbolic spaces.

\begin{corollary}\label{cor:coveredbynonhyp}
    Let $\surf$ be an orientable surface with $g\geq 1$ or $b\geq 4.$ Then $\fine(\surf)$ can be expressed as the (non-disjoint) union of two connected non-hyperbolic spaces.
\end{corollary}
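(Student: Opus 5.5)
We need to write $\fine(\surf)$ as a union of two connected non-hyperbolic subgraphs, and we will get both pieces from Theorem~\ref{thm:linfty} via Corollary~\ref{maincorollary}. Fix two disjoint essential nonperipheral curves $\beta_1,\beta_2$, which exist under the hypothesis $g\ge 1$ or $b\ge 4$ in all the connected cases. Let $\mathcal{B}_i$ be the isotopy classes of $\mathcal{C}(\surf)-\Star(\beta_i)$. Since $\Star(\beta_1)\cap\Star(\beta_2)$ should miss nothing important, the idea is that every vertex of $\mathcal{C}(\surf)$ lies outside at least one star. The natural decomposition is then $X_i=\fine_{\mathcal{B}_i}(\surf)$.

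I would check the hypotheses in the following order.

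First, the $X_i$ should cover all vertices. A class $[\gamma]$ in both stars is disjoint from or equal to both $\beta_1$ and $\beta_2$. That is not impossible in general, so the choice of $\beta_1,\beta_2$ needs more care. Instead I would choose $\beta_1,\beta_2$ with $d_{\mathcal{C}}(\beta_1,\beta_2)\ge 3$, for example via a pseudo-Anosov image. Then the stars are disjoint, so $\mathcal{B}_1\cup\mathcal{B}_2$ is all vertices. This requires $\mathcal{C}(\surf)$ to have infinite diameter, which holds in the non-sporadic cases.

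Second, the $X_i$ should cover all edges. For $(g,b)=(1,0),(1,1),(0,4)$ the graph is disconnected, and those cases are excluded implicitly, so we may assume every edge of $\fine$ joins disjoint curves $a,b$ with $[a],[b]$ equal or adjacent in $\mathcal{C}$. If both endpoints lie in $\mathcal{B}_1$ the edge lies in the induced subgraph $X_1$, and similarly for $\mathcal{B}_2$. The bad case is $[a]\in\Star(\beta_1)$ and $[b]\in\Star(\beta_2)$, which forces $d(\beta_1,\beta_2)\le 3$. So I will choose $d(\beta_1,\beta_2)\ge 4$.

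Third, each $X_i$ should be connected and non-hyperbolic. The subgraph of $\mathcal{C}(\surf)$ induced by $\mathcal{B}_i$ is the complement of a vertex star. Its connectedness is a known fact for non-sporadic surfaces: the complement of a ball of radius $1$ in the curve graph is connected, as in the proof that the curve graph has one end, or via Masur--Minsky surgery paths avoiding $\beta$. By the Long--Margalit--Pham--Verberne--Yao criterion, $X_i$ is then connected. Corollary~\ref{maincorollary}, applied with $\mathcal{A}=\mathcal{B}_i$, gives that $X_i$ is non-hyperbolic. Finally, the union is non-disjoint, since any class at distance $2$ from both $\beta_i$ lies in both.

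The main obstacle is the connectedness of $\mathcal{C}(\surf)-\Star(\beta)$. My first attempt would be to cite that links and complements of stars are connected in non-sporadic curve graphs. Failing that, I would show it directly: join two curves outside the star by a path, then push each vertex of the path lying in the star off it using a curve that intersects $\beta$, taking a Dehn twist power about a curve filling with $\beta$.
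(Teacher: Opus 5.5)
Your argument is correct and is essentially the argument the paper implicitly relies on (the paper states this corollary in the introduction as a consequence of Corollary~\ref{maincorollary} and never writes out a proof): apply Corollary~\ref{maincorollary} with $\mathcal{A}=\mathcal{B}_1$ and $\mathcal{A}=\mathcal{B}_2$ for two curves $\beta_1,\beta_2$ far apart in $\mathcal{C}(\surf)$. You usefully make explicit two points the paper leaves unstated, namely that $d_{\mathcal{C}}(\beta_1,\beta_2)\ge 4$ is what puts every edge of $\fine(\surf)$ into one of the two induced subgraphs, and that connectivity of $\mathcal{C}(\surf)-\Star(\beta_i)$ is needed before the theorem applies.

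Two small fixes. First, for that connectivity you should cite the known result rather than rely on your Dehn-twist sketch, since twisting only the offending vertices of a path destroys their adjacencies to their neighbours. Second, the overlap is witnessed by a class at distance \emph{at least} $2$ from both $\beta_i$, for example the vertex at distance $2$ from $\beta_1$ on a geodesic to $\beta_2$; a class at distance exactly $2$ from both need not exist once $d_{\mathcal{C}}(\beta_1,\beta_2)\ge 5$.
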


We also study the geometry of the single-isotopy-class fine curve graph. Our work is an adaptation of Lemma 4.5 of Bowden--Hensel--Mann--Militon--Webb \cite{BHMMW} which gives an upper bound for distances in the fine curve graph of the torus (see Remark~\ref{rmk:torus} for more information). We define a similar notion for the crossing number (see Section \ref{sec:four})  between isotopic curves and use it to provide distance bounds on curves in the single-isotopy-class fine curve graph. This gives us the following description of the coarse geometry of the single-isotopy-class fine curve graph.

\begin{theorem} \label{thm:crossing}
    The distance between curves in the single-isotopy-class fine curve graph is bounded above and below by linear functions of the crossing numbers between the two curves.
\end{theorem}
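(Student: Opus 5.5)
We first fix the notion of crossing number that the statement refers to, adapting Bowden--Hensel--Mann--Militon--Webb \cite[Lemma 4.5]{BHMMW}. For isotopic curves $a,b\in\fiber(\surf)$ in minimal position up to small perturbation, lift to the annular cover $\widetilde{A}_\alpha$ corresponding to $\alpha$. There $a$ and $b$ lift to closed curves $\tilde a,\tilde b$ that are isotopic to the core. Pass to the universal cover of this annulus, a strip $\mathbb{R}\times(0,1)$, where $\tilde a$ and $\tilde b$ lift to properly embedded lines invariant under the deck translation $T$. Define the crossing number $c(a,b)$ to be the number of translates $T^k\hat b$ that intersect a fixed lift $\hat a$. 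Equivalently, it counts how many times $b$ ``winds across'' $a$ transversely in the $\alpha$-direction. We need $c(a,b)=0$ exactly when $a$ and $b$ are disjoint or coincide. We also need a near-triangle inequality $c(a,b)\le c(a,d)+c(d,b)+C$, which should follow by considering the order of the lines $T^k\hat a$ across the strip. This ordering, as a bi-infinite sequence of disjoint lines, is the key combinatorial object.

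\emph{Lower bound.} We show that if $a$ and $d$ are disjoint, then $c(a,d)\le 1$. Two disjoint isotopic curves have disjoint lifts, so a fixed lift of $a$ meets no translate of a lift of $d$, apart from a bounded count coming from the choice of normalization. Chaining this along a geodesic $a=a_0,a_1,\dots,a_n=b$ in $\fiber(\surf)$ and applying the triangle inequality gives $c(a,b)\le (1+C)\,d(a,b)$. This yields the lower bound $d(a,b)\ge c(a,b)/(1+C)$.

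\emph{Upper bound.} This is the main obstacle. We must construct a path of length at most $K\,c(a,b)+K'$ between curves with $c(a,b)=n$. Following the torus argument, we would show that for any $a,b$ with $c(a,b)\ge 1$ there exists a curve $d$ isotopic to $\alpha$ such that $d$ is disjoint from $a$ and $c(d,b)\le c(a,b)-1$. The natural candidate is to take the boundary of a regular neighbourhood of the ``upper envelope'' of $\hat a$ together with the portion of $\bigcup_k T^k\hat b$ lying above $\hat a$. More precisely, we would push $\hat a$ off itself in the direction in which $b$ crosses it, staying just above the lowest crossing translate of $\hat b$. This curve is $T$-invariant, so it descends to a simple closed curve in the annulus. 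We then need that it projects to a simple curve in $\surf$ (not merely in the annular cover) isotopic to $\alpha$. This projection step is the delicate point. We would handle it by performing the construction inside a fixed regular annular neighbourhood after an ambient isotopy, or by the surgery technique used for the torus, which takes the outermost arcs of $b$ relative to $a$. If a single surgery does not reduce $c$ by a whole unit, it should at least reduce it after a bounded number of steps.

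Iterating this step $n$ times gives a curve $d_n$ with $c(d_n,b)=0$, hence $d_n$ is disjoint from or equal to $b$. This produces a path of length at most $Kn+1$ and proves the theorem. The case $c=0$ requires separate care: there are isotopic curves that intersect without ``crossing'', and these might still be at distance $2$. This is absorbed into the additive constant.
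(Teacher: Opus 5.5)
There is a genuine gap, and it starts with your definition of the crossing number. As written, $\hat b$ is the $T$-invariant lift of $\tilde b$, so every translate $T^k\hat b$ equals $\hat b$ and $c(a,b)\in\{0,1\}$. Since $\fiber(S)$ has infinite diameter, no upper bound on distance that is linear in this $c$ can hold.

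What you are really using is the torus picture of Bowden--Hensel--Mann--Militon--Webb. There the lifts of $b$ form a bi-infinite, linearly ordered family of disjoint lines permuted by a \emph{transverse} deck translation, and your ``key combinatorial object'' is that linear order. This picture is accurate only for the torus. For $g\ge 2$, in the annular cover $\tilde S_a$ the curve $b$ has, besides its compact elevation, infinitely many noncompact elevations. These are organized as a branching tree (the paper's tree of elevations), not a sequence.

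The natural repair is to count all elevations of one curve met by the compact elevation of the other. This also fails. Take $d$ disjoint from and isotopic to $a$. Let $b$ be obtained from a parallel copy of $a$ lying between $a$ and $d$ by pushing one of its points $k$ times around a loop that exits across $a$, runs around a handle, and returns across $a$, all away from $d$. Then $\dfiber(a,b)\le 2$. Yet the compact elevation of $b$ meets on the order of $k$ distinct elevations of $a$, all at level $\pm1$, as Lemma~\ref{lowerbound} forces. So neither your near-triangle inequality nor a linear lower bound holds for an elevation count. The missing idea is to count \emph{levels}, i.e.\ signed depth in the tree of elevations, which is exactly how $C_\gamma(\beta)$ is defined. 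This is also why that count is symmetric for separating curves but not for nonseparating ones, and why the bounds take different forms in the two cases.

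Even with the right quantity, both halves of your argument are asserted rather than proved, and the omitted steps carry the content.

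\emph{Lower bound.} The paper's substitute for your triangle inequality is Lemma~\ref{lowerbound}. Along a path $\beta=\beta_0,\beta_1,\dots,\beta_N$ of successively disjoint curves, $\hat\beta_{n+1}$ is trapped between $\hat\beta_n$ and the level-$1$ elevations of $\beta_n$ on one side. Those level-$1$ elevations reach at most level $M_n+1$ of $\beta$. This is proved by translating an arc that runs along a level-$1$ elevation of $\beta_n$ to one running along $\hat\beta_n$ (Lemma~\ref{lemma: arcs}), and using monotonicity of level lists in minimal position (Lemma~\ref{lemma:hitscompact}). It gives $M_N\le N-1$, hence at most $2N-1$ levels met, which is the linear lower bound.

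\emph{Upper bound.} You correctly locate the difficulty: the push-off built in the cover must descend to a simple curve in $S$. But you leave it unresolved, hoping a bounded number of surgeries reduces $c$. The paper resolves it by observing that the bigons between $\hat\gamma$ and the outermost levels of $\beta$ it meets project to embedded disks in $S$, and the outermost ones have disjoint interiors. So $\beta$ can be pushed off all of them simultaneously, and each push strips the outermost intersected level(s). This gives $\dfiber(\beta,\gamma)\le\min\{C_\gamma(\beta),C_\beta(\gamma)\}+1$, improved to $\lceil C/2\rceil+1$ for separating curves using Lemma~\ref{lemma:symint}.

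Finally, your concern about intersecting curves with $c=0$ vanishes with the level count. A point of $\beta\cap\gamma$ lifts to a point of $\hat\beta$ on some elevation of $\gamma$, so $C_\gamma(\beta)=0$ exactly when the curves are disjoint or equal.
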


These bounds can be found in Propositions~\ref{prop:distboundssep} (for separating isotopy classes), \ref{prop:distboundssingleisotopyclass} (for nonseparating isotopy classes), and \ref{prop:disttorus} (for curves on tori).

From our analysis, we can also explicitly produce a sequence of curves $\{\alpha_n\}_{n=0}^\infty$ in each case such that $\dfiber(\alpha_0, \alpha_n) = n$ (Corollary \ref{cor:exactdistance}).

\p{Curve graphs, past and present}  The \textit{curve graph} of a surface is a classically studied object that has as its vertices isotopy classes of essential simple closed curves while edges connect isotopy classes that admit disjoint representatives. We obtain the fine curve graph from the curve graph by collapsing each subgraph corresponding to a single isotopy class of curves to a single vertex.

Curve graphs have been used to study the mapping class group (group of boundary-preserving homeomorphisms up to isotopy) and to find out more about surfaces in general. We are often interested in more geometric aspects of curve graphs, such as hyperbolicity (Masur--Minsky \cite{MM}), or combinatorial, such as the automorphism group (Ivanov \cite{Ivanov}). Moreover, variants of curve graphs, such as arc graphs \cite{Hatcher}, pants graphs \cite{HT}, and $k$-curve graphs \cite{kcurve}, have also been constructed and studied.

Some direct analogues of these questions have been explored for fine curve graphs. For example, hyperbolicity is proven in \cite{Bowden_Hensel_Webb_2021}; automorphism groups are found in \cite{LMPVY}, \cite{kk}, and \cite{swwz}; and several variants are introduced in Le Roux--Wolff \cite{LRW}, Booth--Minahan--Shapiro \cite{BMS}, Booth \cite{booth1} \cite{booth2}, and Shapiro \cite{shapiro}.

Since mapping class groups and curve graphs of surfaces are so well-studied, we ask: what are the aspects of fine curve graphs that do \emph{not} have curve graph analogues? One way to approach such a question is to see what information becomes hidden when we pass from the fine curve graph (which we know a bit about) to the curve graph (which we know quite a lot about). 

In fact, the following result on the topology of the fine curve graph is known. Consider fine curve graphs and curve graphs as simplicial complexes by setting $k$-simplices as $k+1$-cliques in the graphs.
Let $f:\fine(\surf)\to \mathcal{C}(\surf)$ be the collapsing map sending curves to their isotopy classes; Dickmann--Himes--Nolte--Shapiro show that $f$ is a homotopy equivalence \cite{dhns}. In fact, they show that single-isotopy-class fine curve graphs (turned simplicial complex by considering $k+1$-cliques as $k$-simplices) are contractible (as are preimages under $f$ of any simplex in $\mathcal{C}(S)$).

We propose the following questions for further study of the fine curve graphs.

\begin{question} (Asked by Alex Wright)
    Does there exist any quasi-isometric embedding of $\ell^\infty$ into some $\mathcal{C}_{\mathcal{A}}^{\dagger}(\surf)$?
\end{question}

\begin{question}
Classify quasi-isometric embeddings between $\mathcal{C}_{\mathcal{A}}^{\dagger}(\surf)$. In particular, is $\mathcal{C}_{\alpha}^{\dagger}(\surf)$ quasi-isometric to $\mathcal{C}_{\beta}^{\dagger}(\surf)$ when $\alpha$ and $\beta$ are not the same topological type?
\end{question}

\p{Proof outline for Theorem~\ref{thm:linfty}} Our goal in Theorem~\ref{thm:linfty} is to show that there is a quasi-isometric embedding of $\coo$ into $\fine_{\mathcal{A}}(S_{g,b}).$ The main idea of the proof is that we find specially chosen annuli in $S_{g,b}$, called witnesses, and then perform very ``mixing" point pushes on each witness. 

\pit{Witnesses of $\mathcal{C}_{\mathcal{A}}^{\dagger}(\surf)$} Let $\beta$ be as in the statement of Theorem~\ref{thm:linfty}. Every vertex in $\fine_{\mathcal{A}}(\surf)$ intersects $\beta$ essentially by construction. We may then take infinitely many disjoint (except potentially at the boundary) annuli whose core curves are isotopic to $\beta.$ Each vertex of $\fine_{\mathcal{A}}(\surf)$ intersects every one of the annuli essentially since none of the curves can be isotoped to be disjoint from any of the annuli, and we refer to the annuli as witnesses for the curve.

Using results inspired by Bowden--Hensel--Webb, in Section \ref{sec:two} we show these point push maps act with linear speed on a specifically chosen graph and have unbounded orbits (akin to the action of pseudo-Anosov homeomorphisms on curve graphs). We then use this in Section \ref{sec:prooftheoremlinfty} to show the image of some $\alpha$ whose isotopy class is in $\mathcal{A}$ under iteration of these point pushes becomes a quasi-isometrically embedded copy of $\coo$ and then construct explicit paths in $\mathcal{C}_{\mathcal{A}}^{\dagger}(\surf)$.

\p{Proof outline for Theorem~\ref{thm:crossing}} In Section \ref{sec:four} we first define the crossing number and discuss some preliminaries on lifts and covers. 

\pit{Crossing number} For two isotopic curves $\beta, \gamma$ in a surface, we define the crossing number $C_{\beta}(\gamma)$ by lifting both curves to the cyclic cover of the surface corresponding to the homotopy class of the given curves. We then count the number of intersections of certain elevations (a connected component of the union of all lifts) to define the crossing number.

We discuss the symmetry of the crossing number in Section \ref{sec:five}, and then use these ideas in Section \ref{sec:six} to prove Theorem~\ref{thm:crossing}.  Our proofs throughout these sections are similar and involve a direct analysis of lifts in the cyclic and universal cover to count the crossing number.

\p{Acknowledgements} The authors thank Dan Margalit for his support and for many conversations. The authors also thank Sarah Koch for extensive conversations and Alex Wright for proposing the question on quasi-isometric embeddings of $\ell^\infty.$ The second author was partially supported by the National Science Foundation under Grant No. DMS-2203431.

\section{Linear-speed action on the fine arc graph of a witness}\label{sec:two}

 Define the \textit{fine arc graph of $\surf$} to be the graph whose vertices are properly embedded (simple) essential arcs in $\surf$ and whose edges connect arcs that are disjoint. In this section, we mostly consider a surface $\Sigma$, an orientable, unpunctured planar surface with at least 2 boundary components. We will denote all planar surfaces by $\Sigma$ and all surfaces that need not be planar by $\surf.$

The main goal of this section is to prove Proposition~\ref{prop:hypactiononfinearcgraph}, which states that there are homeomorphisms isotopic to the identity whose actions on $\mathcal{A}^\dagger(\Sigma)$ are loxodromic-like. That is, we show that there exists a homeomorphism $\varphi:\Sigma\to \Sigma$ such that there exist $K\in \mathbb{R}_{> 0}$ and $L\in \mathbb{R}_{\geq 0}$ such that $\frac{1}{K}n-L\leq d_{\finearc(\Sigma)}(\varphi^n(\gamma),\gamma)$ for all $n$ and for any arc $\gamma.$ We say that such elements act with \textit{linear speed} (rather than being loxodromic) since we do not know whether the graphs in question are hyperbolic. These homeomorphisms will allow us to find a quasi-isometrically embedded copy of $\coo$ in $\fine_{\mathcal{A}}(S_{g,b}),$ where $S_{g,b}$ is the original surface.

Along the way, we adapt results of Bowden--Hensel--Webb \cite{Bowden_Hensel_Webb_2021} about bounds on distances in the fine arc graph in terms of those in the surviving arc graph (a subgraph of the arc graph that we define later).

\begin{proposition}\label{prop:hypactiononfinearcgraph}
    Let $\Sigma$ be a planar surface with at least 2 boundary components and $P\subset \Sigma\setminus\partial\Sigma$ such that $3\leq|P|< \infty.$ Then there is a homeomorphism $\varphi\in\Homeo(\Sigma,P)$ isotopic to the identity in $\Sigma$ that acts with linear speed on $\mathcal{A}^\dagger(\Sigma).$ That is, there are constants $K>0,L\geq 0$ such that, for any arc $\gamma$ in $\Sigma,$ 
    \[\frac{1}{K}|n|-L\leq d_{\finearc(\Sigma)}(\varphi^n(\gamma),\gamma).\]
\end{proposition}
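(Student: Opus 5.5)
The plan is to follow Bowden--Hensel--Webb and compare the fine arc graph with the arc graph of the punctured surface $\Sigma\setminus P$, through the \emph{surviving arc graph}. Let $\mathcal{A}^s(\Sigma, P)$ be the graph whose vertices are isotopy classes (rel $P$) of essential arcs in $\Sigma\setminus P$ that remain essential in $\Sigma$, with edges joining classes that have disjoint representatives.

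\textbf{Step 1: choosing $\varphi$.} Since $|P|\geq 3$ and $\Sigma$ is planar with at least two boundary components, $\Sigma\setminus P$ is a surface of sufficiently high complexity. Hence it carries a pseudo-Anosov mapping class $f$ lying in the point-pushing (Birman) kernel of the forgetful map $\mathrm{Mod}(\Sigma\setminus P)\to \mathrm{Mod}(\Sigma)$; for example, take a product of pushes of the points of $P$ along filling loops. Let $\varphi\in\Homeo(\Sigma,P)$ be a representative of $f$. It is then isotopic to the identity in $\Sigma$, as the statement requires.

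\textbf{Step 2: linear speed on $\mathcal{A}^s$.} The surviving arc graph should be hyperbolic, or at least should admit a coarsely Lipschitz relation to the curve graph of $\Sigma\setminus P$ (via subsurface-projection-type arguments as in BHW). Moreover, a pseudo-Anosov that fills $\Sigma\setminus P$ should act loxodromically on it, with some translation length $\tau>0$. Concretely, I would map each surviving arc to the boundary curves of a regular neighbourhood of the arc together with $\partial\Sigma$ and the nearby punctures. Using Masur--Minsky, this gives
\[
d_{\mathcal{A}^s}(f^n a, a)\geq c\,d_{\mathcal{C}(\Sigma\setminus P)}(f^n \pi(a),\pi(a)) - C \geq c\tau|n| - C'.
\]

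\textbf{Step 3: fine to surviving.} Prove a BHW-type inequality: for fine arcs $\gamma,\delta$ in $\Sigma$, there is a choice of $P'$ with
\[
d_{\mathcal{A}^s(\Sigma,P')}([\gamma],[\delta])\leq d_{\finearc(\Sigma)}(\gamma,\delta).
\]
The idea is that a fine path $\gamma=\gamma_0,\dots,\gamma_k=\delta$ consists of consecutive disjoint arcs. One perturbs the points of $P$ slightly, via an isotopy supported near $P$, so that they miss all the $\gamma_i$. Each $\gamma_i$ then gives a surviving arc, and consecutive arcs remain disjoint.

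The subtlety is that $P$ must be moved differently for different paths, while $\varphi$ is fixed. I would handle this as BHW do: the set of configurations near $P$ is connected, and the resulting perturbation changes the mapping class only by a bounded amount, or by conjugation by a homeomorphism close to the identity. Alternatively, one can choose $\varphi$ equal to the identity near $P$ and argue that the class $[\varphi^n\gamma]$ relative to $P'$ is within bounded distance of $f^n[\gamma]$ for $P'$ near $P$. A cleaner route, if it is available, is to first isotope $\gamma$ off $P$ by a small move, at bounded fine distance ($\leq 2$), so that we may assume $\gamma\cap P=\emptyset$. Then perturb only points that lie on intermediate arcs, using disks around $P$ that are $\varphi$-invariant.

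\textbf{Step 4: combining.} Putting Steps 2 and 3 together gives
\[
d_{\finearc}(\varphi^n\gamma,\gamma)\geq d_{\mathcal{A}^s}(f^n[\gamma],[\gamma]) - C'' \geq \tfrac1K|n|-L,
\]
with constants independent of $\gamma$. Uniformity holds because the translation length bound of a loxodromic is uniform over all base points.

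\textbf{Main obstacle.} I expect Step 3 to be the main obstacle, together with establishing loxodromicity on the surviving arc graph. The difficulty is controlling the perturbation of $P$ uniformly, so that the lower bound holds for all arcs, including those passing through $P$.
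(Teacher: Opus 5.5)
Your proposal is correct and follows the paper's proof. The ingredients are the same: a Kra point-push that is pseudo-Anosov on $\Sigma\setminus P$; a linear lower bound in $\mathcal{A}^s(\Sigma\setminus P)$ coming from $\Sigma\setminus P$ (the paper uses Disarlo's quasi-isometry $\mathcal{A}(\Sigma\setminus P)\simeq\mathcal{C}(\Sigma\setminus P)$ together with $d_{\mathcal{A}}\le d_{\mathcal{A}^s}$, where you use the coarsely Lipschitz arc-to-curve map); the Bowden--Hensel--Webb comparison $d_{\mathcal{A}^s(\Sigma\setminus P)}\le d_{\finearc(\Sigma)}$; and a perturbation of $\gamma$ off $P$ costing an additive $2$. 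Your ``cleaner route'' in Step 3 is exactly the paper's, and the obstacle you anticipate does not arise: once $\gamma\cap P=\emptyset$ you also have $\varphi^n(\gamma)\cap P=\emptyset$ because $\varphi(P)=P$, so you only push the intermediate arcs of a fine geodesic off $P$ inside small disjoint disks around points of $P$ that miss the neighbouring arcs, and neither $P$ nor $\varphi$ ever has to be changed.
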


We begin by stating and proving the well-known result that there are point-pushing pseudo-Anosov maps that act loxodromically on arc graphs.

\begin{lemma}\label{cor:pointpushactshyponarcgraphrelp}
    Let $\Sigma$ be a planar surface with at least 2 boundary components. Let $P\subset \Sigma\setminus\partial\Sigma$ be a finite set with $|P|\geq 3.$ Then there exists a point-pushing mapping class of $\Sigma\setminus P$ that acts loxodromically on $\mathcal{A}(\Sigma\setminus P).$
\end{lemma}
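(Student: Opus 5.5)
We combine three standard ingredients: the Birman exact sequence, Kra's theorem on point-pushing pseudo-Anosovs, and the Masur--Schleimer description of the arc graph in terms of subsurface projection to the whole surface. Set $X=\Sigma\setminus P$; it is a planar surface with at least $2$ boundary components and at least $3$ punctures. In particular $X$ is not a sporadic surface, so $\mathcal{A}(X)$ is connected and infinite diameter, and it is quasi-isometric to $\mathcal{C}(X)$ via a coarsely well-defined projection.

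\emph{Step 1: construct a pseudo-Anosov point push.} Fix $p\in P$ and put $X'=\Sigma\setminus(P\setminus\{p\})$. This surface has at least two punctures besides the filled-in point, so $\pi_1(X',p)$ is nonabelian free and $X'$ is hyperbolic. Choose a loop $\gamma$ in $\pi_1(X',p)$ that \emph{fills} $X'$, meaning every essential simple closed curve of $X'$, including curves around $\partial$-components and punctures in combination, intersects every representative of $\gamma$. For example, take a product of generators whose closed geodesic crosses every arc of a filling arc system. By Kra's theorem, the point push $\mathrm{Push}(\gamma)\in\mathrm{Mod}(X)$ is pseudo-Anosov. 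It lies in the kernel of the forgetful map $\mathrm{Mod}(X)\to\mathrm{Mod}(X')$, so it is isotopic to the identity in $\Sigma$ once all of $P$ is forgotten, which is the point-pushing property we need.

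\emph{Step 2: pseudo-Anosovs act loxodromically on $\mathcal{C}(X)$ and $\mathcal{A}(X)$.} By Masur--Minsky, a pseudo-Anosov $f$ satisfies $d_{\mathcal{C}(X)}(f^n c,c)\ge n/K_0$ for some $K_0>0$ independent of $c$. Next, consider the map $\mathcal{A}(X)\to\mathcal{C}(X)$ sending an arc to a boundary component of a regular neighborhood of the arc together with the boundary components or punctures it meets. This map is coarsely well defined and $2$-Lipschitz, since disjoint arcs give disjoint or nearly disjoint curves. It is also $\mathrm{Mod}(X)$-equivariant. Hence
\[ d_{\mathcal{A}(X)}(f^n a,a)\ge \tfrac12 d_{\mathcal{C}(X)}(f^n\sigma(a),\sigma(a))-C\ge \tfrac{n}{2K_0}-C, \]
which gives positive translation length, so $f$ acts loxodromically. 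If the arc graph here includes arcs between punctures, the same surgery still applies.

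\emph{Main obstacle.} The subtle point is checking that the surgered curve is essential and nonperipheral in $X$, so that the projection is defined. This requires $X$ to have enough topology, and that is where $|P|\ge 3$ is used. If we are near-sporadic, a neighborhood of an arc plus one boundary could bound a once-punctured disc or annulus. With at least $3$ punctures and $2$ boundary components, one of the two boundary curves of the neighborhood is always essential: its complement contains at least two punctures or boundary components on each side. I would verify this by a short case check on the two endpoints of the arc: boundary to same boundary, boundary to different boundary, and the cases involving punctures.
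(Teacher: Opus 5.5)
Your proposal is correct and follows essentially the paper's route: Kra's theorem gives a pseudo-Anosov point push, this acts loxodromically on $\mathcal{C}(\Sigma\setminus P)$, and the action transfers to $\mathcal{A}(\Sigma\setminus P)$ through the coarsely equivariant surgery map; you use only its Lipschitz direction and check that the surgered curve is essential, whereas the paper cites Disarlo's full quasi-isometry. One small fix: your suggested example of a filling loop does not work as stated, since a simple closed curve separating two pairs of ends can cross every arc of a filling arc system without filling, so instead cite the standard existence of filling loops based at $p$.
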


\begin{proof}
    By work of Disarlo, $\mathcal{A}(\Sigma\setminus P)$ is quasi-isometric to $\mathcal{C}(\Sigma\setminus P)$ via surgery arguments \cite[Theorem 4.1]{Disarlo}. This quasi-isometry is coarsely equivariant in the sense that a mapping class acting on $\mathcal{A}(\Sigma\setminus P)$ acts in a very similar way on $\mathcal{C}(\Sigma\setminus P).$ (This similarity can be made more rigorous by factoring through the arc and curve graph, which we do not define here.) By work of Kra \cite{Kra}, a point-push along a filling curve in $\Sigma\setminus P$ is a pseudo-Anosov. We conclude that this point-push acts loxodromically on $\mathcal{C}(\Sigma\setminus P)$, and, using the coarse equivariance, therefore on $\mathcal{A}(\Sigma\setminus P)$ as well.
\end{proof}

For a connected, orientable surface $\surf,$ define the \textit{surviving arc graph of $\surf\setminus P$}, denoted $\mathcal{A}^s(\surf \setminus P)$, to be the subgraph of $\mathcal{A}(\surf\setminus P)$ spanned by arcs that remain proper and essential after filling in $P$ (and are therefore proper and essential in $\surf$). For a homeomorphism $\varphi \in \Homeo(S)$ denote by $[\varphi]_{S\setminus P}$ the mapping class of $\varphi$ restricted to $S\setminus P.$

\begin{lemma}\label{lemma:distanceinequalitysurvivingfine}
    Let $S=\surf$ be a connected, orientable surface with $b\geq 2$, and $P\subset\interior(S)$ be a finite set. Let $\varphi\in \Homeo(S)$ such that $\varphi(P)=P,$ 
    then for any vertex $\gamma$ of $\finearc(S)$ disjoint from $P,$ we have that
    \[d_{\mathcal{A}^s(S\setminus P)}\left([\gamma]_{S\setminus P}, [\varphi]^i_{S\setminus P}[\gamma]\right) \leq d_{\finearc(S)}(\gamma,\varphi^i(\gamma))\]
    for all $i\in \mathbb{Z}.$
\end{lemma}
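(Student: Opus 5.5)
The idea is to take a geodesic in $\finearc(S)$ from $\gamma$ to $\varphi^i(\gamma)$ and map it, vertex by vertex, to a path in $\mathcal{A}^s(S\setminus P)$ of no greater length. The map will send an arc to the isotopy class rel $P$ of a perturbation of it that avoids $P$.

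Let $\gamma=\gamma_0,\gamma_1,\dots,\gamma_n=\varphi^i(\gamma)$ be a geodesic in $\finearc(S)$, so $n=d_{\finearc(S)}(\gamma,\varphi^i(\gamma))$. The endpoints $\gamma_0$ and $\gamma_n$ already avoid $P$, since $\gamma$ does and $\varphi(P)=P$. The intermediate arcs may pass through points of $P$. Because $P$ is finite and consecutive arcs are disjoint, I will modify each intermediate $\gamma_j$ by a small isotopy in $S$, supported in tiny disjoint discs around the points of $P\cap\gamma_j$. Each such isotopy pushes $\gamma_j$ off $P$. The discs are chosen small enough to miss $\gamma_{j-1}$ and $\gamma_{j+1}$, which are closed sets disjoint from $\gamma_j$. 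A subtlety is that a point $p\in P$ could lie on $\gamma_{j-1}$ and $\gamma_{j+1}$ simultaneously with $\gamma_j$ disjoint from both; this is harmless, since we only perturb $\gamma_j$ near points of $P$ lying on $\gamma_j$ itself. Performing the perturbations for all $j$ at once, with pairwise disjoint tiny discs, yields a new path $\gamma_0',\dots,\gamma_n'$. In this path consecutive arcs are still disjoint, every arc avoids $P$, and $\gamma_0'=\gamma$ and $\gamma_n'=\varphi^i(\gamma)$.

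Next, I check that each $[\gamma_j']_{S\setminus P}$ is a vertex of $\mathcal{A}^s(S\setminus P)$. The arc $\gamma_j'$ is isotopic in $S$ to $\gamma_j$, which is essential and proper in $S$, so after filling in $P$ it is still essential. It is therefore also essential in $S\setminus P$, since an inessential arc in $S\setminus P$ bounds a disc there and so also in $S$. Hence it survives. Disjoint arcs give equal or adjacent vertices, so we obtain a path of length at most $n$ in $\mathcal{A}^s(S\setminus P)$.

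Finally, I identify the endpoints. We have $[\gamma_n']=[\varphi^i(\gamma)]_{S\setminus P}=[\varphi]^i_{S\setminus P}[\gamma]$, because $\varphi$ restricts to a homeomorphism of $S\setminus P$. Together with $[\gamma_0']=[\gamma]_{S\setminus P}$, this gives the inequality.

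The main obstacle is the perturbation step. One must ensure that it is simultaneous and consistent, and that pushing an arc off a puncture does not destroy essentiality in $S$. The latter is immediate since the perturbation is an isotopy in $S$. The rest is routine.
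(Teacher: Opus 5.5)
Your proposal is correct and follows the same route as the paper, which defers to the perturbation argument of Bowden--Hensel--Webb's Lemma 4.2 with arcs in place of curves: you push the intermediate vertices of a geodesic off $P$ by small isotopies that preserve disjointness of consecutive arcs, then pass to isotopy classes in $\mathcal{A}^s(S\setminus P)$. The only imprecision is the phrase ``pairwise disjoint tiny discs'', since discs for $\gamma_j$ and $\gamma_{j+2}$ around a common point of $P$ cannot be disjoint; but your argument only uses that discs around distinct points are disjoint and that each disc misses the neighboring arcs, and both can be arranged.
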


The proof of Lemma~\ref{lemma:distanceinequalitysurvivingfine} is analogous to that of Lemma 4.2 of Bowden--Hensel--Webb, but we replace all instances of ``curve" with ``arc" \cite{Bowden_Hensel_Webb_2021}. It is further important to note that $\finearc(S)$ is connected for surfaces with boundary but not punctures (the proof is analogous to that of the connectivity of fine curve graphs).

\begin{proof}[Proof of Proposition~\ref{prop:hypactiononfinearcgraph}]
    By Lemma~\ref{cor:pointpushactshyponarcgraphrelp}, there is a point-pushing homeomorphism $\varphi\in \Homeo_0(\Sigma)$ with $\varphi(P)=P$ such that the mapping class $[\varphi]$ acts loxodromically on $\mathcal{A}(\Sigma\setminus P).$ Let $K$ and $L'$ be positive integers such that $\frac{1}{K}|n| - L' \leq d_{\mathcal{A}(\Sigma\setminus P)}([\varphi]^n(\alpha),\alpha).$ 

    Let $\gamma$ be an arc disjoint from $P$ that is a vertex of $\mathcal{A}^{\dagger}(\Sigma)$. We then have the following sequence of inequalities:
    \begin{align*}
        \frac{1}{K}|n| - L' &\leq d_{\mathcal{A}(\Sigma\setminus P)}([\varphi]_{\Sigma\setminus P}^n[\gamma]_{\Sigma\setminus P},[\gamma]_{\Sigma\setminus P})\\
        &\leq d_{\mathcal{A}^s(\Sigma \setminus P)}([\varphi]_{\Sigma\setminus P}^n[\gamma]_{\Sigma\setminus P},[\gamma]_{\Sigma\setminus P})\\
        &\leq d_{\finearc(\Sigma)}(\gamma,\varphi^n(\gamma)).
    \end{align*}

    It remains to show the above for $\gamma$ with $\gamma\cap P\neq \emptyset.$ Let $\gamma'$ be a perturbation of $\gamma$ with $\gamma'\cap \gamma = \emptyset$ and $\gamma'\cap P = \emptyset.$ It follows that $\varphi^n(\gamma)\cap \varphi^n(\gamma')=\emptyset$ for all $n.$ We then have that \[d_{\finearc(\Sigma)}(\gamma',\varphi^n(\gamma'))-2\leq d_{\finearc(\Sigma)}(\gamma,\varphi^n(\gamma)) \leq d_{\finearc(\Sigma)}(\gamma',\varphi^n(\gamma'))+2.\] Taking $L=L'+2,$ we obtain 
    \[
    \frac{1}{K}|n| - L \leq d_{\finearc(\Sigma)}(\gamma,\varphi^n(\gamma))
    \]
    for all vertices $\gamma$ in $\finearc(\Sigma)$ and $n\in \mathbb{Z}.$
\end{proof}

The results in this section are significant only when the arc graphs are connected (and indeed they are connected by work of Long--Margalit--Pham--Verberne--Yao \cite{LMPVY}). Connectivity of surviving arc graphs follows by considering arcs in $\Sigma$ to be (isotopy classes of) arcs in $\Sigma \setminus P,$ potentially after some deformation to avoid $P$. 

If $\finearc(\Sigma)$ is hyperbolic, Proposition~\ref{prop:hypactiononfinearcgraph} implies that $\varphi$ acts loxodromically on $\finearc(\Sigma).$ However, that is not strictly necessary for the proof of our theorem. 

\section{A quasi-isometric embedding of $\coo$}\label{sec:prooftheoremlinfty}

The goal of this section is to prove Theorem~\ref{thm:linfty}. In Section~\ref{subsec:constructinghomeo}, we (somewhat) explicitly construct homeomorphisms of $\surf$ that allow for the embedding. In Section~\ref{subsec:prooftheoremlinfty}, we complete the proof of Theorem~\ref{thm:linfty}.

\subsection{Constructing homeomorphisms of $\surf$ and paths in $\mathcal{C}_{\mathcal{A}}^\dagger(\surf)$}\label{subsec:constructinghomeo}

In this section, we construct homeomorphisms that will allow us to embed $\coo$ into $\ourfine(\surf).$ We will then use our knowledge of these homeomorphisms to build paths (and therefore bound distances) in $\ourfine(\surf).$

\p{Summary} To build homeomorphisms, we will begin with countably many disjoint annuli, one for each entry of an element of $\coo.$ These annuli will ``witness'' all curves in the collection $\mathcal{A}$, so each curve is impacted by any homeomorphism supported on any of these annuli.

Then we will use a model homeomorphism from Section~\ref{sec:two} that acts with linear speed on the fine arc graph of an annulus to build homeomorphisms supported on each of the annuli from the previous paragraph. 

The quasi-isometric embedding from $\coo$ to $\ourfine(\surf)$ will then be defined by approximating an element of $\coo$ as an integer element of $\coo$ and sending this integer element to the image of a fixed curve under a product of homeomorphisms. The integer in the $i$th slot of the element of $\coo$ becomes the power of a homeomorphism supported on the $i$th annulus. 

The lower bound for the distance between the fixed curve and its image is given by the lower bound constants in Section~\ref{sec:two}. The upper bound is given by the construction of explicit paths in this section.

\p{Set-up and a result about an auxiliary graph} Let $A=S^1\times I$ be an annulus. A \emph{multiarc} in $A$ is a finite collection of disjoint (including at the endpoints) proper simple essential arcs in $A.$ Let $\theta_1,\ldots,\theta_\ell\subset S^1$ be a collection of distinct points in the circle. We can view these points as subsets of each boundary component of $A.$ Similarly, let $\sigma_1,\ldots,\sigma_\ell \subset S^1$ be a collection of distinct points in $S^1$ such that $\sigma_i\neq \theta_j$ for all $i,j.$ 

Define a \emph{$\theta$-multiarc} to be a multiarc that consists of $\ell$ arcs, where the $i$th arc connects $\{\theta_i\}\times\{0\}$ to $\{\theta_i\}\times \{1\}.$ Similarly, define a \emph{$\sigma$-multiarc} to be a multiarc that consists of $\ell$ arcs, where the $i$th arc connects $\{\sigma_i\}\times\{0\}$ to $\{\sigma_i\}\times \{1\}.$ The \emph{$i$th component} of a $\theta$-multiarc (resp. $\sigma$-multiarc) is the component that connects $\{\theta_i\}\times \{0\}$ to $\{\theta_i\}\times\{1\}$ (resp. $\{\sigma_i\}\times \{0\}$ to $\{\sigma_i\}\times\{1\}$). The $i$th components of a $\theta$-multiarc and a $\sigma$-multiarc are called \emph{corresponding components.}

We now define a graph $\G.$ The vertices of $\G$ are all $\theta$-multiarcs and $\sigma$-multiarcs. An edge connects two multiarcs if they are disjoint (including at their endpoints). (We remark that $\G$ is naturally bipartite, with one part of the partition consisting of $\theta$-multiarcs and the other part consisting of $\sigma$-multiarcs, though that is not important to our argument.)

We now have the following connectivity result about $\G.$

\begin{lemma}\label{lemma:connectivityofextragraph}
    Let $\G$ be a multiarc graph of an annulus $A$ as defined above. Let $\alpha,\beta$ be two $\theta$-multiarcs such that there is a homotopy of $A$ (relative to the boundary) taking $\alpha$ to $\beta.$ Then $\alpha$ and $\beta$ are in the same connected component of $\G.$
\end{lemma}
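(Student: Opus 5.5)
Throughout, set $A=S^1\times I$ and work in the universal cover $\tilde A=\mathbb{R}\times I$. Two $\theta$-multiarcs $\alpha,\beta$ are homotopic rel boundary exactly when they have the same winding data. Concretely, after lifting, for each $i$ the lift of the $i$th component of $\alpha$ starting at $(\tilde\theta_i,0)$ and the corresponding lift of the $i$th component of $\beta$ end at the same point $(\tilde\theta_i+k_i,1)$.

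The plan is in two steps. First, reduce to the case where $\alpha$ and $\beta$ are in minimal position, or even disjoint in their interiors. Second, handle general pairs by induction on the number of intersections, inserting $\sigma$-multiarcs in between.

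\textbf{Step 1 (disjoint or parallel case).} Suppose $\alpha$ and $\beta$ have disjoint interiors. Their components share only the endpoints $\theta_i\times\{0,1\}$, and they are homotopic rel boundary.

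Choose small disjoint closed disks $D_i$ about the points $\theta_i\times\{0\}$ and $\theta_i\times\{1\}$ on the boundary. The region between corresponding components of $\alpha$ and $\beta$ is a bigon, since they are homotopic rel boundary in an annulus and simple. So there is room to draw an arc from $\sigma_j\times\{0\}$ to $\sigma_j\times\{1\}$ disjoint from both $\alpha$ and $\beta$. Here we use that $\sigma_j\neq\theta_i$.

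More carefully, cut $A$ along $\alpha$ to get disks (one per complementary region between consecutive $\theta$-arcs). For each $j$, the point $\sigma_j\times\{0\}$ lies in some region of $A\setminus(\alpha\cup\beta)$. The strategy is to construct a $\sigma$-multiarc $\tau$ inside $A\setminus(\alpha\cup\beta)$, following a parallel pushoff of $\alpha$ and $\beta$ near the boundary. If $\tau$ exists, then $\alpha-\tau-\beta$ is a path in $\G$.

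Existence of $\tau$ needs the $\sigma_j\times\{0\}$ and $\sigma_j\times\{1\}$ to lie in the same complementary component with compatible cyclic order. I would handle a mismatch by modifying $\beta$ slightly at its endpoints. Alternatively, I would go through an intermediate $\theta$-multiarc that agrees with $\alpha$ except in tiny neighborhoods of the boundary, so that near the boundary it is "straight" (vertical) like a fixed model multiarc.

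\textbf{Step 2 (reduction to a model).} I think the cleaner route is to show every $\theta$-multiarc is connected in $\G$ to a \emph{standard} multiarc $\alpha^{(k)}$. Here $k=(k_1,\dots,k_\ell)$ records the winding, and $\alpha^{(k)}$ consists of straight-line segments in $\tilde A$ projected down. Since the winding is a homotopy invariant, $\alpha$ and $\beta$ would then both connect to the same $\alpha^{(k)}$.

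To connect an arbitrary $\alpha$ to $\alpha^{(k)}$, I would use the standard surgery/induction on $|\alpha\cap\alpha^{(k)}|$. Pass to general position, which is possible by a small perturbation fixing endpoints, since homeomorphisms isotopic to the identity rel boundary act on $\G$ by automorphisms. Then take an innermost bigon between $\alpha$ and $\alpha^{(k)}$. Bigons exist whenever there are interior intersections, because the arcs are homotopic rel endpoints in a surface with contractible-lift geometry.

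Next, find a multiarc $\alpha'$ with fewer intersections with $\alpha^{(k)}$, and with $\alpha,\alpha'$ at $\G$-distance $2$ via a $\sigma$-multiarc. Choose $\alpha'$ as a pushoff of $\alpha$ across the innermost bigon, slightly displaced so that $\alpha'$ is disjoint from $\alpha$ away from the endpoints. Near the endpoints $\theta_i\times\{0,1\}$ the two arcs must share endpoints. So the $\sigma$-multiarc must thread between them, and it can, because it avoids the points $\theta_i$.

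\textbf{Main obstacle.} The main obstacle is the endpoint issue. Two $\theta$-multiarcs always share endpoints, so they are never adjacent. We must therefore verify that for two $\theta$-multiarcs meeting only at endpoints (or differing by a bigon move), some $\sigma$-multiarc is disjoint from both. This needs each $\sigma_j$-arc to find a complementary corridor from bottom to top. I would prove this in $\tilde A$ by showing the relevant strip between consecutive lifted arcs of $\alpha\cup\alpha'$ connects $\sigma_j\times\{0\}$ to $\sigma_j\times\{1\}$. This works when the two multiarcs are homotopic rel boundary, so the strips are consistently ordered. Finally, I would check that the infinitely many lifts can be realized by a single embedded equivariant choice.
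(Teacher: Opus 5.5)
\textbf{There is a genuine gap, in the reduction to general position at the start of Step 2.}

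You justify perturbing $\alpha$ by saying that homeomorphisms isotopic to the identity rel $\partial A$ act on $\G$ by automorphisms. That only shows such a homeomorphism $h$ permutes the components of $\G$. That $h(\alpha)$ lies in the \emph{same} component as $\alpha$ is exactly the content of the lemma, so the step is circular. The same objection applies to your proposal to fix a ``mismatch'' in Step 1 by modifying $\beta$. In fact no mismatch can occur there: disjoint homotopic arcs cut $A$ into strips whose top and bottom sides are the same $\theta$-intervals.

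This is not a technicality. Vertices of $\G$ are arbitrary topological arcs; in the application they are images of $\alpha\cap A$ under an arbitrary homeomorphism $\varphi$. Such an arc can cross $\alpha^{(k)}_i$ infinitely often, with the crossings accumulating at the common endpoint $\theta_i\times\{0\}$. Then $|\alpha\cap\alpha^{(k)}|$ is infinite and the innermost-bigon induction cannot start. Your Step 1 also cannot connect $\alpha$ to a generic small perturbation, since the perturbation will typically meet the interior of $\alpha$, possibly infinitely often.

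You can close the gap in either of two ways.

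\emph{Build the tame replacement by hand so that Step 1 applies.} Choose $\alpha''_i$ inside the open complementary region between $\alpha_i$ and $\alpha_{i+1}$. Make it piecewise linear away from its endpoints, and near $\theta_i\times\{0,1\}$ let it hug the boundary intervals closely enough to miss $\alpha^{(k)}_i$ there. Then $\alpha''$ is a $\theta$-multiarc with interiors disjoint from $\alpha$. It is homotopic to $\alpha$ rel $\partial A$, and it meets $\alpha^{(k)}$ in finitely many transverse points. Your bigon induction then runs, provided you also allow half-bigons with a corner at a shared endpoint (for example when $\alpha_i$ crosses $\alpha^{(k)}_i$ exactly once).

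\emph{Strengthen Step 1.} Prove that if only \emph{corresponding} components of two $\theta$-multiarcs meet, then $\sigma_j\times\{0\}$ and $\sigma_j\times\{1\}$ lie in one component of the complement, so the two multiarcs have a common $\sigma$-neighbor. This is the only geometric input in the paper's proof, and with it no general position or bigons are needed. The paper takes an isotopy $H$ from $\alpha$ to $\beta$. The condition ``only corresponding components of $H(\alpha,s)$ and $H(\alpha,t)$ meet'' is open in $s$ and holds at $s=t$. Extracting a finite subcover of $I$, it chains $H(\alpha,t_i)$, $H(\alpha,s_i)$, $H(\alpha,t_{i+1})$ through $\sigma$-multiarcs.

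Once repaired, your normal-form route gives an explicit path to a standard representative. The paper's compactness argument handles wild arcs directly.
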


The proof of this lemma is mainly inspired by the proof of the connectivity of the fine arc graph \cite{LMPVY}, which is itself based on the outline of the proof of connectivity of the fine curve graph \cite{Bowden_Hensel_Webb_2021}. We include all details for completeness and we will clarify certain distinctions after we conclude the proof.

\begin{proof}[Proof of Lemma~\ref{lemma:connectivityofextragraph}]
    We will use the fact that $\alpha$ and $\beta$ are homotopic to produce a path from $\alpha$ to $\beta$ in $\G$ that alternates between $\theta$-multiarcs and $\sigma$-multiarcs.

    Since $\alpha$ and $\beta$ are homotopic, there exists an isotopy \[H:A\times I\to A\] fixing $\partial A$ such that $H(x,0)=x$ for all $x\in A$ and $H(\alpha,1)=\beta.$ 

    We will now define an open cover of $I.$ For all $t\in I,$ define
    \begin{align*}
        U_t =\ &\text{the connected component of } \{s\in I\ |\ \text{the } i\text{th component of } H(\alpha, s) \text{ intersects the }\\ & \qquad\qquad\qquad\qquad\qquad\qquad\qquad\qquad \ \  j\text{th component of } H(\alpha, t) \text{ only if } i=j\}\\ &\text{containing }t.
    \end{align*}
    By the compactness of $I,$ and since $\{U_t\}_{t \in I}$ is an open cover of $I$, there exists a finite subcover $\{U_{t_1},\ldots,U_{t_k}\}$ of $I.$ Without loss of generality, by excluding some of these sets, shrinking some of the sets, including at most 2 other sets, or reordering the sets, we have that:
    \begin{itemize}
        \item $U_h\cap U_i\cap U_j=\emptyset$ for all distinct $h,i,j,$
        \item $0\in U_{t_i}$ if and only if $t_i=0$ and $1\in U_{t_i}$ if and only if $t_i=t_k=1,$ and
       
        \item there exists a natural ordering on the $U_{t_i}$'s such that $t_i<t_j$ if and only if the left endpoint of $U_{t_i}$ is less than the left endpoint of $U_{t_j}$.
        
    \end{itemize}
    Let $s_i\in U_{t_i}\cap U_{t_{i+1}}$ for $1\leq i\leq k-1.$ We note that $H(\alpha,s_i)$ and $H(\alpha,t_i)$ (and similarly, $H(\alpha, s_i)$ and $H(\alpha,t_{i+1})$) are each $\theta$-multiarcs such that only corresponding components of arcs (potentially) intersect. This implies that $\{\sigma_j\}\times \{0\}$ is in the same connected component of $A\setminus\left(H(\alpha,t_i)\cup H(\alpha,s_i)\right)$ as $\{\sigma_j\}\times\{1\}$ for all $j.$

    For this reason, for $1\leq i\leq k-1$, we can define a $\sigma$-multiarc 
    $\alpha_{q_{2i-1}}$ disjoint from $H(\alpha,t_i)$ and $H(\alpha,s_i)$ and another $\sigma$-multiarc $\alpha_{q_{2i}}$ disjoint from $H(\alpha,s_i)$ and $H(\alpha,t_{i+1}).$ We therefore have the following path in $\G:$
    \[\alpha=H(\alpha,t_1)-\alpha_{q_1}-H(\alpha,s_1)-\alpha_{q_2}-\cdots-\alpha_{q_{2k-2}}-H(\alpha,t_k)=\beta.\]
    We point out that the path alternates between $\theta$-multiarcs and $\sigma$-multiarcs. We conclude that $\alpha$ and $\beta$ are in the same connected component of $\G.$
\end{proof}

The proof of Lemma~\ref{lemma:connectivityofextragraph} closely follows that of the connectivity of the arc graph, but with the following two distinctions. First, we work with multiarcs instead of single arcs, causing a mild difference in our definition of the open cover. Second, we prescribe specific possible endpoints for our multiarcs.
As a result, $\G$ is disconnected, with connected components in bijection with $\mathbb{Z}$ (corresponding to Dehn twists about a boundary component of the annulus).

\p{The construction of homeomorphisms} 
Take $\beta$ and $\mathcal{A}$ to be as in the statement of Theorem~\ref{thm:linfty}. Let $\alpha\in[\alpha]\in\mathcal{A}$. Let $A\subset \surf$ be an annulus with a curve isotopic to $\beta$ as the core curve. To make it explicit, we impart the following coordinates on $A$: $A=S^1\times I=I/(\{0\}\sim \{1\}) \times I.$ Without loss of generality, $\alpha \cap A = \bigcup_J\left(\{\theta_j\}\times I\right)$ for $J$ a finite set. Define $A_i\subset A$ for $i$ positive to be 
\[A_i=S^1\times \left[\frac{1}{2^i },\frac{1}{2^{i-1}}\right].\]

\begin{figure}[h]
\begin{center}
\begin{tikzpicture}
    \node[anchor = south west, inner sep = 0] at (0,0) {\includegraphics[width=2.5in]{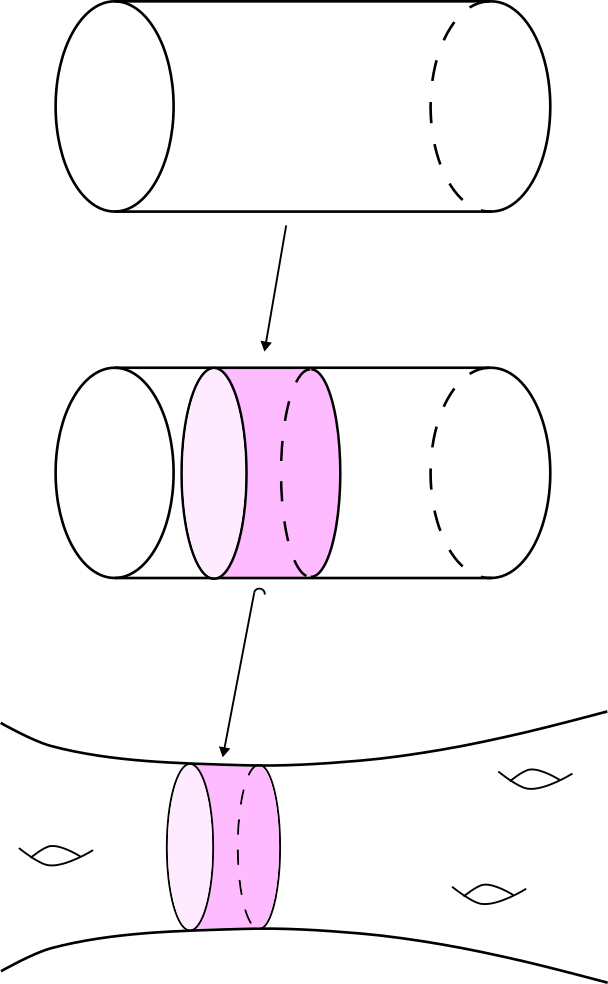}};

    \node at (6.3,9.3){$\text{Ann}$};
    \node at (3,9.3) {$\varphi \curvearrowright$};
    \node at (6.1,5.5){$A$};
    \node at (6,1.5){$\surf$};
    \node at (1,2){$\varphi_i\curvearrowright$};
    
    \node at (3.25,5.5){$A_i$};

    \node at (3.15,7.3){$f_i$};

    \node at (2.8,3.35){$\iota_i$};

    \end{tikzpicture}
\caption{A schematic of the surfaces and maps used in Section~\ref{subsec:constructinghomeo}.}\label{fig:linfinitymaps}
\end{center}
\end{figure}

Let $\text{Ann}=S^1\times I$ be a model annulus and let $\varphi\in \Homeo_0(\text{Ann})$ be a homeomorphism isotopic to the identity fixing the boundary that acts with linear speed on $\finearc(\text{Ann})$; one is guaranteed to exist by Proposition~\ref{prop:hypactiononfinearcgraph}. 

To identify each $A_i$ with $\ann,$ for positive $i$ we define $f_i:\ann\to A_i$ by $f_i(\theta, x)=\left(\theta, \frac{x}{2^{i}}+1-
\frac{1}{2^{i-1}}\right)$ so that $f_i$ maps $[0,1]$ to $[1- \frac{1}{2^{i-1}}, 1- \frac{1}{2^{i}}]$.

Let $\iota_i:A_i\hookrightarrow \surf$ be the inclusion of $A_i$ into $\surf$. The main purpose of these inclusion mappings is to change a cylindrical coordinate of $A_i$ into a single point in $\surf.$ Abusing notation, we may refer to $\iota_i(A_i)$ by $A_i$.

Define homeomorphism $\varphi_i\in \Homeo_0(\surf)$ by
\[\varphi_i(x)= \begin{cases}
\iota_i\circ f_i \circ\varphi \circ f_i^{-1}\circ \iota_i^{-1}(x) & x\in \iota_i(A_i)\subset \surf\\
x & \text{otherwise.}
\end{cases}\]

A schematic of the above subsurfaces and maps is in Figure~\ref{fig:linfinitymaps}.

\p{Building paths} Abusing notation, we can consider $\alpha\subset \ann$ by considering $f_i^{-1}\iota_i^{-1}(\alpha\cap A_i)= \coprod_J \{\theta_j\}\times I \subset \ann$ for some finite set $J.$ 

Let $\alpha'$ be a curve isotopic to and disjoint from $\alpha$; we may assume (up to reparametrization or isotopy) that $\alpha'\cap A = \coprod_J \{\sigma_j\}\times I$ for some collection of points $\{\sigma_j\}$ in $S^1$.

Using the collection of $\theta_j$s from the previous paragraph and these $\sigma_j$'s, we apply Lemma~\ref{lemma:connectivityofextragraph} to get a canonical path $P$ of length $d$ in $\G$ between $\alpha$ and $\varphi(\alpha):$ 
\[P=(\alpha = v_0-v_1-\ldots- v_d=\varphi(\alpha))\subset \G.\]
Since homeomorphisms act isometrically on $\G,$ we can apply a power of $\varphi$ to each multiarc in the path to obtain a canonical path of length $d$ from $\varphi^m(\alpha)$ to $\varphi^{m+1}(\alpha)$ for all $m:$ 
\[\varphi^m(P) = (\varphi^m(\alpha) = \varphi^m(v_0)- \varphi^m(v_1)-\ldots- \varphi^m(v_d)=\varphi^{m+1}(\alpha))\subset \G.\]
Each such path alternates between $\theta$-multiarcs and $\sigma$-multiarcs. Moreover, by applying $\iota_i\circ f_i$ to the paths, we obtain (not properly embedded) multiarcs in $\surf.$

Given $\vec n=(n_1,n_2,\ldots,n_k, 0,0,\ldots)$ with each $n_i\in \mathbb{Z},$ we define \[\Phi=\Phi_{\vec n}=\Pi_{i=1}^k \varphi_i^{n_i}.\] Recall $|\vec n| = \max_i \{|n_i|\}$. We will now build a path from $\alpha$ to $\Phi(\alpha)$ of length $|\vec n|\cdot d.$ We define vertex (curve) $v_\ell$ along the path by defining it piecewise on the various parts of $\surf$. 

\begin{minipage}{0.8\textwidth}
\p{$\boldsymbol{A_i\subset \supp \Phi}$} Let $q^\ell = \bigcup_{i=1}^k \left(\iota_if_i \varphi^{n_i-\sign(n_i)}\right),$ where $\sign(x)=\frac{x}{|x|}.$

\p{\boldsymbol{$A\setminus\supp(\Phi)$}} If $\ell$ is odd, let $r^\ell$ be $\coprod_J \{\theta_j\}\times I$. If $\ell$ is even, let $r^\ell$ be $\coprod_J\{\sigma_j\}\times I.$

\p{\boldsymbol{$\surf\setminus A$}} If $\ell$ is odd, let $s^\ell$ be $\alpha\setminus A.$ If $\ell$ is even, let $s^\ell$ be $\alpha'\setminus A.$
\end{minipage}
\bigskip

Define a vertex $v_\ell$ along the path to be the curve $q^\ell \cup r^\ell \cup s^\ell.$ As a result, we have a path of length $d$ between $\Phi(\alpha)$ and $\Phi'(\alpha)$ where $\Phi'$ is a product of the $\varphi_i$ with the absolute values of all nonzero degrees of the $\varphi_i$ reduced by 1.

We repeat this recursively, reducing the (nonzero) absolute values of the degrees one at a time, to obtain a path of length $|\vec n|\cdot d$ between $\Phi(\alpha)$ and $\alpha.$

\subsection{Proof of Theorem~\ref{thm:linfty}}\label{subsec:prooftheoremlinfty}

Before we prove Theorem~\ref{thm:linfty}, we note that $\coo$ is quasi-isometric to the sequences of integers that eventually stabilize to 0. This is so because the inclusion of integer sequences into real sequences is quasi-surjective (each real sequence is distance less than 1 from an integer sequence) and distances in the space of integer sequences are inherited from distances in $\coo$ under the $\ell^\infty$ metric. 

\begin{proof}[Proof of Theorem~\ref{thm:linfty}]
    Let $\mathcal{A}$ be as in the statement of the theorem and fix $\alpha\in[\alpha]\in\mathcal{A}$.  For each $i\in \mathbb{N}$, let $\varphi_i\in\Homeo_0(\surf)$, and let the annuli $A$ and $A_i$ be as in Sections~\ref{sec:two} and~\ref{subsec:constructinghomeo}. Let $K'$ and $L$ be the linear action constants from Proposition~\ref{prop:hypactiononfinearcgraph}, let $d$ be the length of the path guaranteed by Lemma~\ref{lemma:connectivityofextragraph} and the work in the previous section, and let $K=\max\{K',d\}.$
    
    Let $\vec x = (x_1,x_2,\ldots) \in \coo$ and let $\vec n = (n_1,n_2,\ldots)=(\floor*{x_1},\floor{x_2},\ldots).$ For the purposes of quasi-isometry, it is enough to consider $\vec n$ in lieu of $\vec x.$ Let $N$ be an index at which the absolute values of the entries of $\vec n$ achieve their maximum, so $|n_N|=\max_i\{|n_i|\}.$ Define the following map:
    \begin{align*}
        f:\coo&\to \ourfine(\surf)\\
        \vec n &\mapsto \Pi_i \varphi_i^{n_i}(\alpha) = \Phi(\alpha).
    \end{align*}

    We claim that $f$ is a quasi-isometry. Since homeomorphisms act isometrically on fine curve graphs and fine arc graphs, to check that $f$ is a quasi-isometric embedding, it is enough to consider the distance between $\alpha$ and $\Phi(\alpha)$. Let $a$ be a single connected component of $\alpha\cap A$. Then we have the following sequence of inequalities.

    \begin{align}
        \frac{1}{K}|\vec n|-L &\leq \frac{1}{K'}|\vec n|-L \label{ineq12}\\
        &= \frac{1}{K'}|n_N|-L\label{ineq13}\\
        &\leq d_{\finearc (A_N)}(\varphi_N^{n_N}(a),a)\label{ineq14}\\
        &\leq d_{\ourfine(\surf)}(\Phi(\alpha),\alpha)\label{ineq15}\\
        &\leq d|n_N|\label{ineq16}\\
        &\leq K|n_N|+L\label{ineq17}\\
        &=K|\vec n|+L\label{ineq18}
    \end{align}

    \ref{ineq12} is true since $K'\leq K.$ 
    
    \ref{ineq13} holds by the definition of $n_N$ and the $\ell^\infty$ norm. 
    
    \ref{ineq14} holds by the definition of $\varphi$ in Section~\ref{subsec:constructinghomeo} and the properties in Proposition~\ref{prop:hypactiononfinearcgraph}.
    
    \ref{ineq15} holds since the vertices of any path $\Phi(\alpha)$ to $\alpha$  in $\ourfine(\surf)$ project to a path in $\finearc(A_N)$ and 
    
    $\Phi(\alpha)$ projects to the same thing as $\varphi_N^{n_N}(\alpha)$. 
    
    \ref{ineq16} holds by the construction of a path in Section~\ref{subsec:constructinghomeo}. \ref{ineq17} is true since $d\leq K$ and $L\geq 0.$
    
    \ref{ineq18} holds by the definition of the $\ell^\infty$ norm. 
\end{proof}

\section{Crossing Numbers}\label{sec:four}

The goal of this and the following section is to relate the distance in the single-isotopy-class fine curve graph to crossing numbers in the cyclic cover. We follow suggestions from Remark 4.6 of Bowden--Hensel--Mann--Militon--Webb \cite{BHMMW}. Our definition of the crossing number is modified  to account for more complex behavior of lifts in covers of hyperbolic surfaces (as opposed to tori). 

\begin{remark} \label{rmk:torus}
The relevant work of Bowden--Hensel--Mann--Militon--Webb concerns the fine curve graph of the torus. The fine curve graph often has a special definition in the torus case, as additional edges are added between curves that intersect exactly once. Due to these additional edges, it is more difficult to produce a lower bound for distances in this enlarged fine curve graph.

We emphasize that our definition of the fine curve graph of the torus in this and earlier sections does \emph{not} include edges corresponding to intersection number 1.
\end{remark}

For the remainder of the paper, we fix a closed hyperbolic orientable surface $S=S_{g}$, in particular, $g \geq 2$. We note our arguments apply to any compact hyperbolic surface with boundary that has connected fine curve graph. In particular, if $g=0$ then $b\geq5$ and if $g=1$ then $b \geq 2$. One modification to note is that the corresponding surface covers will also have boundary, but the same techniques work regardless.

Let $\alpha$ be an essential simple closed curve in $S$ and denote by $[\alpha]$ its isotopy class. Denote the annular cover of $S$ corresponding to $[\alpha]\in\pi_1(S)$ by $\tilde S_\alpha.$ Since $\alpha$ is a fixed representative, we will fix the precise annular cover corresponding to $\alpha.$ Let $p:\tilde S_\alpha\to S$ be the covering map.

 \begin{figure}[h]
\begin{center}
\begin{tikzpicture}[scale=0.9, transform shape]
    \node[anchor = south west, inner sep = 0] at (0,0) {\includegraphics[width=\textwidth]{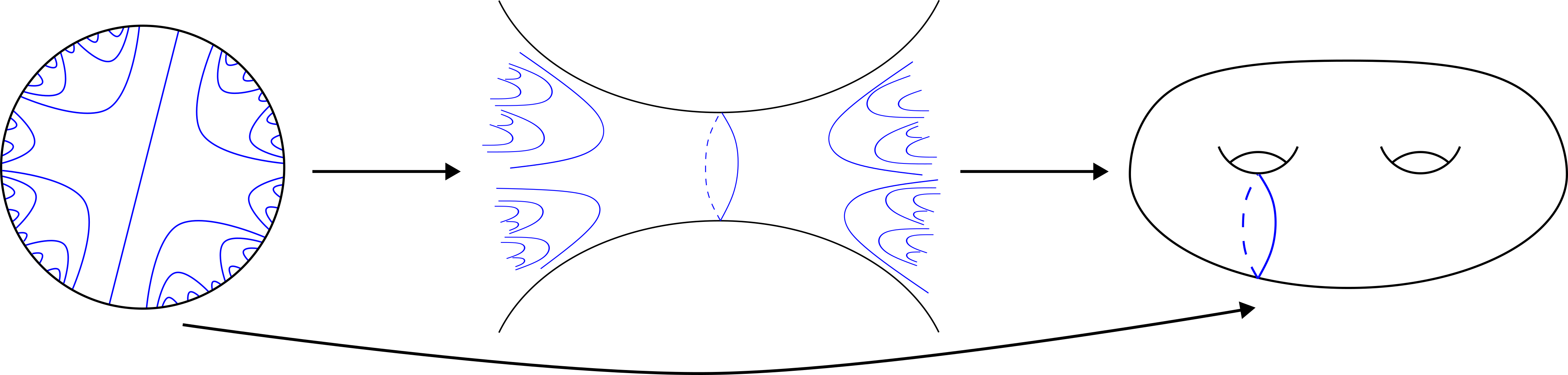}};

    \node at (12.6,1.5){\color{blue}$\alpha$};
    
    \node at (1.4,3.75){$\tilde S$};
    \node at (7,3){$\tilde S_\alpha$};
    \node at (13,3.35){$S$};

    \node at (3.5,2.23){$\hat P$};
    \node at (10,2.2){$p$};
    \node at (6.8,0.2){$P$};

    \end{tikzpicture}
\caption{The relevant covers and maps used throughout Sections~\ref{sec:four} and \ref{sec:five}.}\label{fig:covers}
\end{center}
\end{figure}

 Let $\tilde S$ be the universal cover of $S$ (the hyperbolic plane) and let $P:\tilde S \to S$ be the covering map. Let $\hat P: \tilde S \to \tilde S_\alpha$ be the intermediate covering map, so $P=p\circ \hat P$. If a statement applies to both $\tilde S$ and $\tilde S_\alpha,$ we will write $\tilde S_{(\alpha)}.$ Unless otherwise stated, we will be working in the annular cover.
 
Generally, we will have curves $\beta$ and $\gamma$ isotopic to $\alpha.$ We may refer to $\tilde S_\alpha$ by $\tilde S_\gamma$ as well. Since we will be focused on the topology rather than the geometry of the cyclic cover, this distinction does not matter. However, for visualization purposes, we note that since we restrict our work to hyperbolic surfaces, we may equip $S$ with a hyperbolic metric, making $\tilde S_\alpha$ a flared annulus.

Now we explore intersection patterns of curves in $\tilde S_\alpha$ with the goal of bounding distances in single-isotopy-class fine curve graphs. Let $\beta,\gamma\in[\alpha]$. We will bound $\dfiber(\beta,\gamma)$, the distance between $\beta$ and $\gamma$ in the subgraph of $\fine(S)$ induced by all curves homotopic to $\alpha,$ using the \emph{crossing numbers} between $\beta$ and $\gamma$.

Given a curve $\gamma \in [\alpha],$ an \emph{elevation} of $\gamma$ in $\tilde S_\alpha$ (or $\tilde S$) is a connected component of $p^{-1}(\gamma)$ (or $P^{-1}(\gamma)$). Moreover, exactly one elevation of $\gamma$ in $\tilde S_\alpha$ is compact, and it is isotopic in $\Tilde{S}_\alpha$ to the unique compact geodesic. We denote this compact elevation by $\hat{\gamma}.$

\p{Tree of elevations} We define a tree $T_\gamma$ corresponding to an oriented curve $\gamma \in [\alpha]$ by the following. Let the elevations of $\gamma$ be the vertices of $T_\gamma$. We connect two elevations with an edge exactly when these elevations are in the closure of a common connected component of $\Tilde{S}_\alpha \setminus p^{-1}(\gamma)$.%

\begin{figure}[h]
\begin{center}
\begin{tikzpicture}
    \node[anchor = south west, inner sep = 0] at (0,0) {\includegraphics[width=0.6\textwidth]{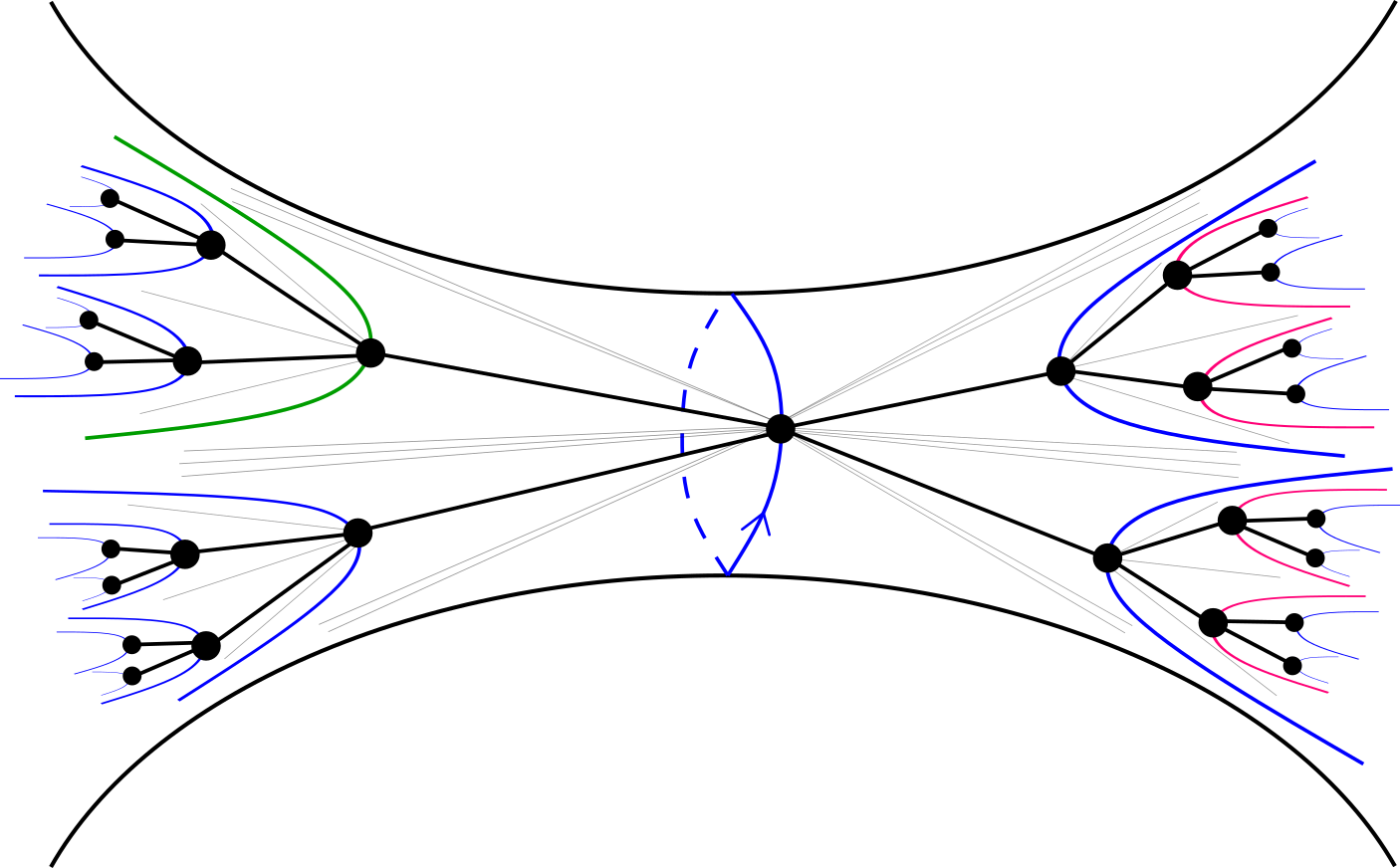}};
    
    \node at (4.7,4){0};
    \node at (4.7,1.7){$\hat{\gamma}$};
    \node at (2.3,4.4){$-1$};
    \node at (7,4.4){1};
    \node at (1.3,4.9){$-2$};
    \node at (7.7,4.7){2};

    \end{tikzpicture}
\caption{An example of how to construct a tree $T_\gamma$ of elevations of a curve $\gamma.$ All level 2 lifts of $\gamma$ are pink. One elevation of $\gamma$ at level $-1$ is green.}\label{fig:deflevelselevations}
\end{center}
\end{figure}

Let $\hat{\gamma}$ be the compact elevation of $\gamma.$ Fixing an orientation of $\Tilde{S}_\alpha$, we define the \emph{level} of an elevation as the distance in $T_\gamma$ of its corresponding vertex $v$ to $\hat{\gamma}$ with positive or negative value depending on whether $v$ is to the right or left of $\hat{\gamma}$. We refer to a level of $\gamma$ as the union of all elevations of the same level.

The tree of elevations can also be defined for the universal cover with the level of an elevation defined as the level of the elevation in the cyclic cover that it covers. Note the covering map $\hat P:\tilde S\to \tilde S_\alpha$ preserves levels and setwise fixes a unique elevation that covers the compact elevation.

\p{Crossing number}
    Let $\beta$ and $\gamma$ be isotopic curves. Define the \emph{$\gamma$-crossing number of $\beta$}, denoted $C_{\gamma}(\beta)$, to be the number of levels of $\gamma$ that $\hat{\beta}$ intersects. Note this is always finite since $\hat{\beta}$ is compact. When $\beta=\gamma$, we set the crossing number to zero. Since $\beta$ is isotopic to $\gamma$, we can pair each elevation of $\beta$ with an elevation of $\gamma$ such that the isotopy between the curves lifts to an isotopy between the elevations.

\begin{lemma} \label{lem:homotopiclift}
    Let $S=S_g$ with $g\geq 2$ and let $\beta$ and $\gamma$ be homotopic essential simple closed curves. Then any free homotopy from $\beta$ to $\gamma$ can be lifted to a free homotopy from any given elevation of $\beta$ to a unique elevation of $\gamma$ in $\tilde S_{(\gamma)}$. 
\end{lemma}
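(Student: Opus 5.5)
We would prove this via the homotopy lifting property for covering spaces, applied to the composite map from an annulus. Parametrize $\beta$ and $\gamma$ as maps $S^1\to S$ and let $H:S^1\times I\to S$ be a free homotopy with $H(\cdot,0)=\beta$ and $H(\cdot,1)=\gamma$. Fix an elevation $\tilde\beta$ of $\beta$ in $\tilde S_{(\gamma)}$; it is either a compact circle covering $\beta$ (degree one in the annular case, when $\tilde\beta=\hat\beta$) or a line covering $\beta$ infinitely often. In either case $\tilde\beta$ is the image of a lift of the map $\beta\circ q:\mathbb{R}\to S$, where $q:\mathbb{R}\to S^1$ is the universal covering. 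Pulling $H$ back along $q\times \mathrm{id}$ gives a homotopy $\mathbb{R}\times I\to S$. Since $\mathbb{R}\times I$ is simply connected and locally path connected, the lifting criterion gives a unique lift $\tilde H:\mathbb{R}\times I\to \tilde S_{(\gamma)}$ that agrees at time $0$ with the chosen parametrization of $\tilde\beta$. Uniqueness follows from uniqueness of lifts once one point is fixed.

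Next we show that $\tilde H(\cdot,1)$ parametrizes a full elevation of $\gamma$, not merely part of one. The image is connected and lies in $p^{-1}(\gamma)$ (respectively $P^{-1}(\gamma)$), so it lies in a single elevation $\tilde\gamma$.

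\begin{itemize}
\item In the universal cover, $\tilde H$ is equivariant under the deck transformation $g$ stabilizing $\tilde\beta$ (translation by $1$ on $\mathbb{R}$ corresponds to $g$), again by uniqueness of lifts. Hence $\tilde H(\cdot,1)$ is $g$-equivariant, so $\tilde\gamma$ is the elevation of $\gamma$ stabilized by $g$.
\item Because $\gamma$ is simple and essential, the stabilizer of an elevation is the cyclic group generated by the corresponding primitive element. It follows that $\tilde H(\cdot,1)$ surjects onto $\tilde\gamma$: it covers a fundamental domain, and equivariance does the rest.
\item In the annular cover the same argument works with the image of $\langle g\rangle$. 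When the stabilizer of $\tilde\beta$ is trivial, we instead argue that the image is a lift of the whole line $\gamma\circ q$, which is the entire elevation since elevations of simple curves are embedded lines or circles.
\end{itemize}

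The remaining issue is well-definedness. We must check that the resulting elevation $\tilde\gamma$ does not depend on the parametrization choices, such as the basepoint shift along $\tilde\beta$ or the choice of lift of the parametrization. Changing these amounts to precomposing by a deck transformation of $\mathbb{R}$ that preserves $\tilde\beta$, and by uniqueness of lifts the image at time $1$ is then the same set $\tilde\gamma$.

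We expect the main obstacle to be the statement "for any given free homotopy" together with uniqueness. The lifted homotopy depends on $H$, but in the hyperbolic surface $S_g$ the centralizer of $[\gamma]$ is cyclic. Consequently, any two free homotopies differ by a loop in the space of loops, which acts via the center of the centralizer. So the endpoint elevation is determined by the stabilizer data, using $g\geq 2$ (trivial center of $\pi_1$ and cyclic centralizers). We would verify this last point by comparing two homotopies $H,H'$: concatenating $H$ with the reverse of $H'$ gives a self-homotopy of $\gamma$, whose track of the basepoint commutes with $[\gamma]$ and is therefore a power of it. Such a power preserves each elevation stabilized by $\langle[\gamma]\rangle$, so both homotopies land on the same elevation.
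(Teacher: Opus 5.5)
Your proof is correct. The existence half, lifting $H\circ(q\times\mathrm{id})$ over the simply connected $\mathbb{R}\times I$, is the same as the paper's; you diverge on uniqueness.

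\textbf{The paper's route.} It argues geometrically. A lift of a homotopy from the compact surface $S$ moves points a bounded distance, so it fixes the endpoints of the elevation in $\partial\tilde S_{(\gamma)}$. The target is then the elevation of $\gamma$ with those endpoints.

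\textbf{Your route.} You argue algebraically. In $\tilde S$ the lifted homotopy intertwines translation by $1$ on $\mathbb{R}$ with the stabilizer $g$ of $\tilde\beta$. Since $\gamma$ is primitive with cyclic centralizer, exactly one elevation of $\gamma$ is stabilized by $g$. You use this fact implicitly when you write ``the elevation stabilized by $g$''; it is worth stating, since it is the algebraic counterpart of the paper's tacit claim that distinct elevations have distinct endpoint pairs.

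\textbf{Two small comments.} First, the equivariance argument already makes the target independent of $H$ in $\tilde S$, because $g$ depends only on $\tilde\beta$. Your self-homotopy/centralizer comparison is therefore only needed for noncompact elevations in $\tilde S_\gamma$, where there is no deck group, and even there you could lift to $\tilde S$ and push down by $\hat P$. Second, $H$ followed by the reverse of $H'$ is a self-homotopy of $\beta$, not $\gamma$; the argument is symmetric, so nothing breaks.

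\textbf{What each buys.} The paper's route is brief and yields exactly the characterization used later: corresponding elevations share endpoints in $\partial\tilde S_{(\gamma)}$, as in the proofs of Lemma~\ref{lemma: arcs} and Lemma~\ref{lowerbound}. Yours is metric-free and makes explicit what the paper leaves unstated: that the time-$1$ map fills the whole elevation, and that the result is independent of parametrization and of $H$. It also describes corresponding elevations intrinsically by their stabilizers, which is equivalent to the endpoint description because the endpoints are the fixed points of the stabilizer on the boundary at infinity.
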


\begin{proof}
    We will view a curve in $S$ as the image of a map $\mathbb{R}$ to $S$ with the image of $x$ equal to the image of $x+1$ (amounting to quotienting $\mathbb{R}$ to create $S^1$). Thus, where appropriate, we may consider a curve to be a map $S^1\to S.$ 

    We will first prove that a free homotopy from $\beta$ to $\gamma$ can be lifted to a homotopy between $\hat\beta$ and $\hat\gamma$, the compact elevations of $\beta$ and $\gamma$ in $\tilde S_\gamma$. This can be done via usual homotopy lifting.

    Next we consider noncompact elevations of $\beta$ and $\gamma$ in both $\tilde S$ and $\tilde S_\gamma.$ When viewing $\beta$ and $\gamma$ as images of $\mathbb{R}$ in $S,$ we have a free homotopy between them being a map $\mathbb{R}\times I\to S.$ Since a noncompact elevation of $\beta$ (or $\gamma$) is a lift of $\beta$ (or $\gamma$)---and amounts to a map $\mathbb{R}\to \tilde S_{(\gamma)}$---we may apply the homotopy lifting property to obtain a homotopy from an elevation of $\beta$ to an elevation of $\gamma.$ Since a homotopy in $\tilde S_{(\gamma)}$ fixes endpoints in the boundary, the elevation of $\gamma$ is unique.
\end{proof}

We call two elevations paired by Lemma~\ref{lem:homotopiclift} \textit{corresponding elevations}. By orienting $\beta$ and $\gamma$ so that the isotopy preserves this orientation, we can also pair the levels of $\beta$ with levels of $\gamma$.

\p{Arcs between elevations} Define an \emph{arc} in $\tilde S_\alpha$ to be the image of an embedding $[0,1]\hookrightarrow \tilde S_\alpha.$ We will mostly consider arcs with endpoints in $p^{-1}(\gamma)$ with $\gamma\in [\alpha].$ We say an intersection between two curves or arcs is \emph{transverse} if they cross at the point of intersection. Two curves or arcs are transverse if they are transverse at each point of intersection. 

We call an arc in $\tilde S_\gamma$ \emph{degenerate} if it is homotopic (relative its endpoints) to a subset of $p^{-1}(\gamma).$ Otherwise, an arc is \emph{nondegenerate}. A nondegenerate arc $\zeta$ in $\tilde S_{(\alpha)}$ is in \emph{minimal position} with $\gamma$ if $\zeta$ is transverse to $p^{-1}(\gamma)$ (or $P^{-1}(\gamma)$) and does not form any bigons with $p^{-1}(\gamma)$ (or $P^{-1}(\gamma)$). A degenerate arc $\zeta$ is in minimal position with $\gamma$ if $|\zeta\cap p^{-1}(\gamma)|=1$ or $2$ depending on whether the endpoints agree or not (similarly for $|\zeta\cap P^{-1}(\gamma)|$).

\begin{lemma} \label{lemma: essential1}
    Let $\zeta$ be a nondegenerate arc in $\tilde S$ in minimal position with $\beta.$ Then $\zeta$ intersects each elevation of $\beta$ at most once.
\end{lemma}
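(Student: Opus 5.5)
We argue by contradiction, using the fact that in the universal cover $\tilde S$ (a topological plane) each elevation of $\beta$ is a properly embedded line. The idea is that a double intersection would force a bigon, which minimal position forbids.

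Suppose $\zeta$ meets some elevation $\tilde\beta$ of $\beta$ in two distinct points $x<y$, ordered along $\zeta$. Since $\zeta$ is transverse to $P^{-1}(\beta)$ and compact, $\zeta\cap P^{-1}(\beta)$ is finite. So we may choose two intersection points $x,y$ with $\tilde\beta$ that are consecutive along $\zeta$ among the points of $\zeta\cap\tilde\beta$. Let $\zeta_{xy}$ be the subarc of $\zeta$ between them and $\tilde\beta_{xy}$ the compact subarc of $\tilde\beta$ between them. Because $\tilde\beta$ is a properly embedded line in a plane, $\zeta_{xy}\cup\tilde\beta_{xy}$ is a simple closed curve. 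By the Jordan–Schoenflies theorem it bounds a closed disk $D$.

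This disk $D$ is a bigon between $\zeta$ and $\tilde\beta\subset P^{-1}(\beta)$, but it may not be an innermost (empty) one. The interior of $\zeta_{xy}$ may still meet other elevations of $\beta$. Each such elevation $\tilde\beta'$ is disjoint from $\tilde\beta$, since elevations of the simple curve $\beta$ are pairwise disjoint. Hence any arc of $\tilde\beta'\cap D$ enters and exits $D$ through $\zeta_{xy}$, because $\tilde\beta'$ is proper and so cannot stay inside the compact $D$. Such an arc cuts off a smaller disk in $D$ bounded by a subarc of $\zeta$ and a subarc of $\tilde\beta'$, meeting fewer points of $\zeta\cap P^{-1}(\beta)$. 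We induct on the number of intersection points of $\zeta_{xy}$ with $P^{-1}(\beta)$ and pass to an innermost such disk. That innermost disk is a bigon whose interior misses $P^{-1}(\beta)$ and $\zeta$; it misses $\zeta$ because $\zeta$ is embedded. This contradicts minimal position. Note that the paper's notion of "no bigons" should be read to include such innermost bigons; if it is read as any bigon, $D$ itself already gives the contradiction.

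The main point needing care is finiteness and transversality, so that the innermost-bigon induction terminates. Transversality also guarantees that the arcs of $\tilde\beta'$ genuinely cross $\zeta$ rather than touching it. Nondegeneracy is not needed for this argument beyond its role in making "minimal position" mean "transverse with no bigons". Finally, $x\neq y$ ensures that the bigon is nontrivial.
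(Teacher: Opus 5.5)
Your argument is correct and is essentially the paper's proof, which just invokes the standard innermost-bigon argument (as in the proof of Lemma 1.8 of \emph{A Primer on Mapping Class Groups}); with the usual definition of a bigon, your Jordan--Schoenflies disk $D$ already contradicts minimal position. One small slip in the optional innermost step: the innermost disk need not miss $\zeta$ ``because $\zeta$ is embedded,'' since other portions of $\zeta$ can cross its $\tilde\beta'$-side into it, but each such portion cuts off a smaller bigon, so finiteness again produces one whose interior misses both $\zeta$ and $P^{-1}(\beta)$.
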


\begin{proof}
   
   This follows from a standard innermost bigon argument and our definition of minimal position (see the proof of \cite[Lemma 1.8]{primer}, for example). \end{proof}

\begin{figure}[h] 
\begin{center}
\includegraphics[width=0.5\textwidth]{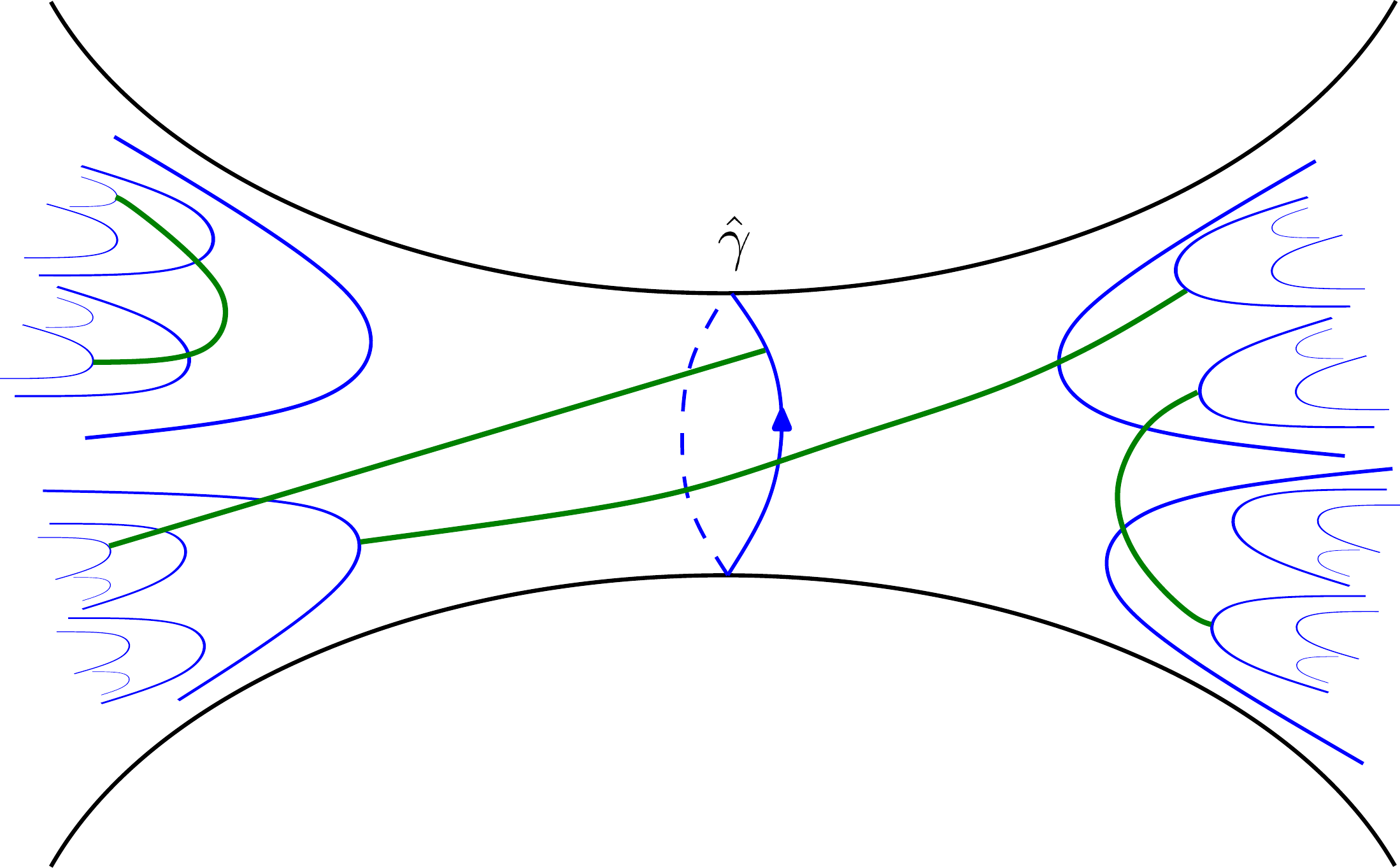}
\caption{The green arcs are in minimal position arcs and project to the same arc in the surface.} \label{fig:arcslevels}
\end{center}
\end{figure}

We note that Lemma~\ref{lemma: essential1} is \emph{not} true in $\tilde{S}_\gamma$; see Figure~\ref{fig:arcsameelevation} for an example. The simpler configurations in the universal cover are why we often factor through $\tilde S$ in our proofs. We further record the following lemma stating that every arc is homotopic (rel endpoints) to a minimal position arc.

\begin{lemma}\label{lemma:archomminpos}
    Let $S=S_g$ be a surface with $g\geq 2$ and let $\gamma$ be a curve in $S$ and $\zeta$ an arc in $\tilde S_{(\gamma)}$ with endpoints in lifts of $\gamma$. Then $\zeta$ is homotopic relative endpoints to an arc $\zeta'$ in minimal position with $\gamma.$
\end{lemma}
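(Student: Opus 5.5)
I would prove Lemma~\ref{lemma:archomminpos} by the standard bigon-removal argument, doing it first in the universal cover $\tilde S$ and then pushing down to $\tilde S_\gamma$ via $\hat P$.

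\textbf{Reductions.} First I reduce to the case where $\zeta$ is transverse to $P^{-1}(\gamma)$ (respectively $p^{-1}(\gamma)$) and meets it in finitely many points. Since $\zeta$ is compact, it meets only finitely many elevations. A small homotopy rel endpoints (general position) then achieves transversality. Next I split into the degenerate and nondegenerate cases.

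\textbf{Degenerate case.} Here $\zeta$ is homotopic rel endpoints to a path inside $p^{-1}(\gamma)$. Both endpoints lie on a single elevation, since distinct elevations are disjoint closed sets. I take $\zeta'$ to be a small push-off of the subarc of that elevation joining the endpoints, arranged to meet the elevation only at its endpoints. This gives one intersection if the endpoints coincide and two otherwise, as the definition requires.

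\textbf{Nondegenerate case.} Lift $\zeta$ to $\tilde S$ if we are in $\tilde S_\gamma$. Lifting an arc is canonical once a lift of the initial point is chosen, and homotopies rel endpoints lift and project. Each elevation of $\gamma$ in $\tilde S=\mathbb{H}^2$ is a properly embedded line, isotopic to a geodesic, and separates the plane. If $\zeta$ meets $P^{-1}(\gamma)$ at interior points more often than necessary, look at two consecutive intersections of $\zeta$ with the same elevation $e$. The subarc of $\zeta$ between them together with the segment of $e$ between them bounds a disk, because $\tilde S$ is simply connected and the Jordan curve theorem applies.

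Pick an innermost such bigon. Its interior may still contain pieces of other elevations, but these are disjoint lines. Any elevation entering the disk must exit through the $\zeta$-side, since it cannot cross $e$. Each such piece therefore cuts off a smaller bigon, contradicting innermostness. So an innermost bigon is clean, and we push $\zeta$ across it, lowering $|\zeta\cap P^{-1}(\gamma)|$ by $2$. The count is finite, so the process terminates. Endpoint behavior needs a small separate check: a half-bigon at an endpoint is removed by sliding the interior near the endpoint off the elevation.

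Nondegeneracy is preserved throughout, since it is a homotopy invariant rel endpoints. The final arc is transverse with no bigons, hence in minimal position. Projecting by $\hat P$ gives the result in $\tilde S_\gamma$, at least up to homotopy. The main obstacle is this last step: a projected arc may fail to be embedded, and bigons in $\tilde S_\gamma$ do not obviously correspond to bigons upstairs (compare Figure~\ref{fig:arcsameelevation}). Two fixes are available.
\begin{enumerate}
\item Run the innermost-bigon argument directly in $\tilde S_\gamma$, using that any bigon there bounds a disk. Such a disk lifts homeomorphically to $\tilde S$, so the same clean-bigon reasoning works. One still must ensure that the pushes keep $\zeta$ embedded.
\item Realize minimal position by pulling $\zeta$ tight to a geodesic after isotoping $p^{-1}(\gamma)$ to geodesics. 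The intersection patterns are then minimal automatically.
\end{enumerate}
I would try the first option.
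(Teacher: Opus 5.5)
In $\tilde S$ your argument is the standard innermost-bigon surgery that the paper's one-line proof cites (the bigon criterion), and it works there. One omission: an innermost bigon must also contain no strands of $\zeta$ itself. Such strands can enter only across the $\gamma$-side and would cut off a smaller bigon, and excluding them is what keeps each push embedded. In the plane you can even bypass surgery: for endpoints on distinct elevations, take an arc that crosses, once each, exactly the elevations separating the two endpoint elevations. It is embedded, bigon-free, and homotopic rel endpoints to $\zeta$.

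The genuine gap is the step you postpone, producing $\zeta'$ in $\tilde S_\gamma$. It cannot be filled, because the statement is false in the annular cover, and your two loose ends are exactly where it fails.

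\emph{Degenerate case.} Let $x\neq y$ lie on the compact elevation $\hat\gamma$. There are two subarcs $c_1,c_2$ of $\hat\gamma$ from $x$ to $y$, so ``the subarc joining the endpoints'' is ambiguous. Worse, embedded arcs from $x$ to $y$ exist in every homotopy class rel endpoints. Let $c$ be an essential simple closed curve in $\tilde S_\gamma$ separating $x$ from $y$, and let $a$ be a push-off of $c_1$ crossing $c$ once. Then $T_c^m(a)$, with $T_c$ the Dehn twist about $c$, is embedded and winds $m$ extra times around the annulus. On the other hand, an embedded arc meeting $p^{-1}(\gamma)$ only at $x,y$ has its interior in one of the two half-annuli bounded by $\hat\gamma$. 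It cuts a disk off that half-annulus, so it is homotopic rel endpoints to $c_1$ or $c_2$. Hence for $|m|\geq 2$ no minimal-position representative exists.

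\emph{Nondegenerate case (your Option 1).} The endpoint issue you defer is not a small check. Take $x\neq y$ on a level-one elevation $e$, and set $\zeta=T_c^m(a)$ as above, now with $a$ a push-off of the segment of $e$ between $x$ and $y$ crossing $c$ once, and $|m|\geq 2$. This $\zeta$ is nondegenerate because it is not homotopic into the line $e$. Let $\tau$ generate the deck group of $\hat P$.
\begin{enumerate}
\item A lift $\tilde\zeta$ joins $\tilde e$ to $\tau^m\tilde e$. Both lines bound the $\tau$-invariant component $\tilde W$ of $\tilde S\setminus P^{-1}(\gamma)$ adjacent to the axis on the side of $e$. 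So no elevation separates the endpoints, and every elevation crossed is crossed at least twice, except that $\tilde e$ and $\tau^m\tilde e$ may each be crossed once.
\item Suppose $\tilde\zeta$ had no bigon. Then it would cross nothing else, so apart from short end excursions it would be a chord of $\overline{\tilde W}$ from $\tilde e$ to $\tau^m\tilde e$. This chord separates $\tau\tilde e$ from $\tau^{m+1}\tilde e$ in $\overline{\tilde W}$, so it meets the corresponding chord of $\tau\tilde\zeta$. That contradicts $\zeta$ being embedded.
\item Hence $\tilde\zeta$ has a bigon. An innermost bigon of the $\tau$-invariant configuration $\hat P^{-1}(\zeta)\cup P^{-1}(\gamma)$ maps injectively to a bigon of $\zeta$ in $\tilde S_\gamma$.
\end{enumerate}
So every embedded representative of this nondegenerate class forms a bigon. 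In practice your surgery stalls at a clean bigon whose $\gamma$-side contains an endpoint of $\zeta$ approached from outside. Option 2 fails on the same examples: the geodesic representative is bigon-free, so it cannot be embedded.

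What your argument actually proves is the universal-cover statement. The $\tilde S_\gamma$ version needs an additional hypothesis, and the paper's appeal to the bigon criterion does not address this either.
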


\begin{proof}
    This follows from well-known surface topology. See for example the ideas in Shapiro \cite{shapiro} and the bigon criterion in \emph{A Primer on Mapping Class Groups} \cite{primer}.
\end{proof}

 \pit{Intersection list of an arc} We can orient $\zeta$ and record elevations of $\gamma$ that $\zeta$ intersects by the ordered list $L=(l_1,l_2,\ldots,l_n).$ For example, if $\hat{\gamma}$ is oriented up at the front in Figure~\ref{fig:arcslevels}, the arcs in the figure have sample lists $(-3,-2,-2,-3),$ $(-3,-2,-1,0),$ $(-1,0,1,2),$ and $(2,1,1,2).$ The following lemma restricts the intersection lists an arc in minimal position can have.
 
\begin{lemma}\label{lemma:hitscompact}
    Let $S=S_g$ be a surface with $g\geq 2$ and let $\gamma$ be a curve in $S$ and $\zeta$ an arc in $\tilde S_{(\gamma)}$ in minimal position with $\gamma$ with endpoints in levels $l_1$ and $l_n$. Let $L = (l_1, l_2, \ldots, l_n)$ be an ordered list of levels of the elevations that $\zeta$ intersects. Then there are two options for $L$:
    \begin{enumerate}
        \item $L$ is strictly monotonic and $|l_j-l_{j+1}|=1$ for all $1\leq j<n,$ or
        \item there exists $k\in\{1,\ldots,n-1\}$ such that:
        \begin{enumerate}
            \item $l_k=l_{k+1},$
            \item $(|l_1|,\ldots,|l_k|)$ is strictly decreasing while $(|l_{k+1}|,\ldots,|l_n|)$ is strictly increasing, and
            \item $|l_j-l_{j+1}|=1$ for $j\neq k.$
        \end{enumerate}
        It follows that $(l_1,\ldots,l_k)$ and $(l_{k+1},\ldots, l_n)$ are both strictly monotonic.
    \end{enumerate}
    Moreover, if $\zeta$ is nondegenerate and $l_j=0$ for some $j\in\{1,\ldots,n\},$ then option (1) holds.
\end{lemma}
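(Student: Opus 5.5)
We would prove the statement by lifting $\zeta$ to the universal cover, reading the intersection list there, and using the tree structure $T_\gamma$. Lifting changes nothing about levels or minimal position: by Lemma~\ref{lemma:archomminpos} and the remark that $\hat P$ preserves levels, any arc $\zeta$ in $\tilde S_\gamma$ in minimal position lifts to an arc $\tilde\zeta$ in $\tilde S$ with the same intersection list. A bigon in $\tilde S$ would project to a bigon or immersed bigon in $\tilde S_\gamma$, and lifting removes no bigons. So it suffices to work in $\tilde S$, where Lemma~\ref{lemma: essential1} applies for nondegenerate arcs.

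\textbf{Key step: the list is a path in $T_\gamma$.} In $\tilde S$ the elevations of $\gamma$ are disjoint properly embedded lines, and each separates the plane. Hence the dual tree of complementary components is a genuine tree, covering (in a level-preserving way) the tree $T_\gamma$ in $\tilde S_\gamma$.
\begin{itemize}
\item Consecutive intersection points of $\tilde\zeta$ lie on elevations in the closure of a common complementary region, so consecutive entries of $L$ differ by at most $1$ in the tree.
\item By Lemma~\ref{lemma: essential1}, $\tilde\zeta$ crosses each elevation at most once. Because each elevation separates, once $\tilde\zeta$ crosses an elevation it can never return to the far side. So the sequence of crossed elevations is a geodesic (non-backtracking) path in the tree.
\end{itemize}

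\textbf{Translating to levels.} Now we convert the tree geodesic into a statement about levels. The level function is signed distance to the distinguished elevation $\tilde{\hat\gamma}$ covering the compact one. Along a geodesic in a tree, the distance to a fixed vertex first strictly decreases, then strictly increases, with at most one turning point where the path passes closest to $\tilde{\hat\gamma}$. There are two cases.
\begin{itemize}
\item If the geodesic passes through $\tilde{\hat\gamma}$ itself, the signed level runs monotonically through $0$, with steps of $1$. This gives option (1).
\item If the geodesic does not pass through $\tilde{\hat\gamma}$, consider the complementary region adjacent to the closest point. The geodesic moves between two distinct elevations on the same side of $\tilde{\hat\gamma}$, both at the same distance, siblings adjacent across one region. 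This produces $l_k=l_{k+1}$, with $|l|$ strictly decreasing before and strictly increasing after. This gives option (2).
\item A geodesic that is monotone in distance and never turns is also option (1).
\end{itemize}

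\textbf{Main obstacle: the sibling step and the degenerate case.} The main subtlety is in justifying the sibling step. The tree $T_\gamma$ as defined joins all elevations bordering a common region, so a region can give a clique rather than a tree edge. One must check that elevations bordering a common region are exactly one parent and several children. In the universal cover, each region is the lift of a component of $S\setminus\gamma$, or of the annular complement pieces, bounded by elevations. The one closest to $\tilde{\hat\gamma}$ is unique since the dual graph is a tree. Consequently, a "same level" step can occur only between two children, and only once.

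For the \emph{moreover} clause, if some $l_j=0$ then the arc crosses $\tilde{\hat\gamma}$ itself. That elevation is the closest point, so no sibling step occurs, which forces option (1).

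For degenerate arcs, minimal position means one or two intersection points. With one point, option (1) holds trivially. With two points on the same elevation, the list is $(l,l)$, which is option (2) with $k=1$. This is consistent with the "moreover" requiring nondegeneracy.
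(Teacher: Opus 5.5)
Your proof is correct and follows the paper's route: lift to $\tilde S$, use Lemma~\ref{lemma: essential1} to see that each elevation is hit at most once in the nondegenerate case, and read off the level pattern from the way the elevations separate the plane, with the degenerate $(l,l)$ case handled separately. Phrasing the crossing sequence as a non-backtracking path in the dual tree is a cleaner packaging of the paper's region-by-region argument, and it makes explicit something the paper leaves implicit: after the same-level step the arc cannot turn back inward, because that would require recrossing an elevation.
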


\begin{proof} 
It suffices show the result for the universal cover since the quotient map $\hat P:\tilde S\to \tilde S_\gamma$ preserves levels (since $\hat P$ is defined by translation along the level 0 elevation of $\gamma$) and an embedded arc is in minimal position with $\gamma$ in $\tilde S_\gamma$ if and only if its lifts are in minimal position with $\gamma$ in $\tilde S$ (see the proof of the bigon criterion in \emph{A Primer on Mapping Class Groups} \cite{primer}). 

If $\zeta$ is a degenerate arc, then its list of levels in minimal position is $(l,l),$ which satisfies the conditions of the lemma. Otherwise, the endpoints of $\zeta$ are on different elevations of $\gamma,$ so each elevation of $\gamma$ is intersected at most once by Lemma \ref{lemma: essential1}. 

Let $D$ be a connected component of $\tilde S\setminus P^{-1}(\gamma)$. Note $D\cap \zeta$ must be either empty, an arc between elevations differing by one level, or an arc connecting two different elevations of $\gamma$ at the same level. In the case of repeated levels, the repeated level cannot be 0 since there is only one elevation at level 0. In the other case, the absolute values of the levels of the endpoints for $D\cap \zeta$ differ by 1.

If some $D\cap \zeta$ is an arc connecting two different elevations at the same level, then continuing along $\zeta$ in either direction to an adjacent component, $\zeta$ will next hit levels with higher absolute value. This gives the first part of the lemma.

If $\zeta$ is nondegenerate and intersects level 0 of $\gamma$, applying this same argument shows that $\zeta$ is strictly monotonic, so we are now done. \end{proof}

\begin{remark}
    It follows from Lemma~\ref{lemma:hitscompact} that any arc with an endpoint on level 0 is homotopic to a minimal position arc in the cyclic cover $\tilde S_\gamma$ that hits each level (and therefore elevation) at most once. Otherwise, the minimal position arc may repeat a level and may even intersect the same elevation twice, as in Figure ~\ref{fig:arcsameelevation}.
\end{remark}

\begin{figure}[h] 
\begin{center}
\includegraphics[width=0.5\textwidth]{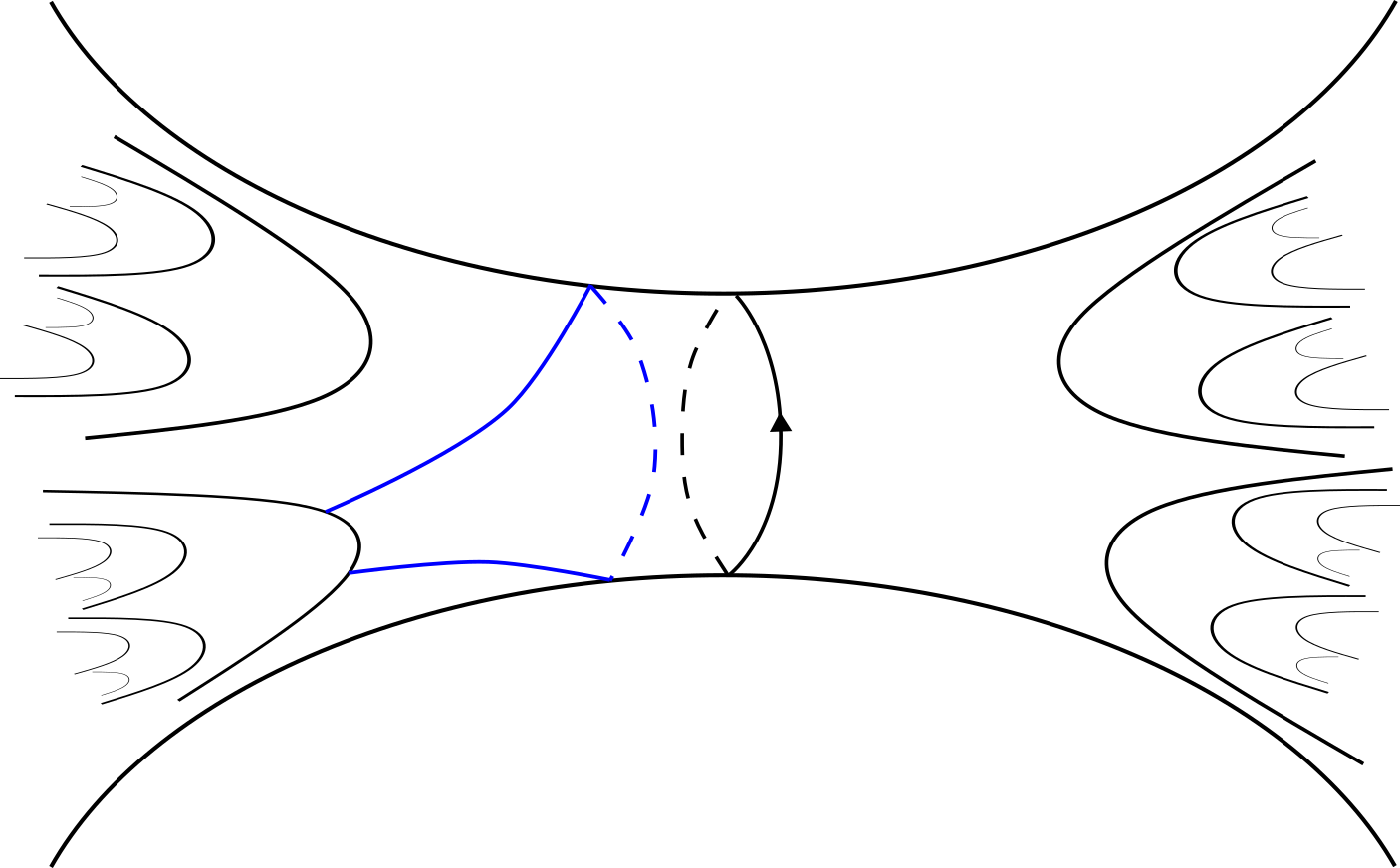}
\caption{An arc in minimal position with $\gamma$ that intersects the same elevation of $\tilde S_\gamma$ twice. The list of levels for the arc is $(-1,-1).$} \label{fig:arcsameelevation}
\end{center}
\end{figure}

 We say an arc $\zeta$ in $\tilde S_{(\gamma)}$ between elevations of $\gamma$ \textit{essentially intersects} $n$ elevations if it is homotopic relative endpoints to an arc in minimal position that intersects $n$ elevations, including at the endpoints.

\begin{lemma}  \label{lemma: essential2}
    Let $\zeta$ be an arc in the cyclic cover with an endpoint on a level $\pm 1$ elevation of $\beta$. Then $\zeta$ essentially intersects $n$ elevations of $\beta$ if and only if at least one of the following holds:

    \begin{enumerate}
        \item $\zeta$ has its other endpoint on a level $\pm n$ elevation, 
        \item $\zeta$ has its other endpoint on a level $\mp(n-2)$ elevation, or
        \item $\zeta$ has its other endpoint on a level $\pm (n-1)$ elevation, and it essentially intersects one other level $\pm 1$ elevation.
    \end{enumerate}
\end{lemma}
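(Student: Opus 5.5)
We reduce the statement to the classification of intersection lists in Lemma~\ref{lemma:hitscompact}. By Lemma~\ref{lemma:archomminpos}, replace $\zeta$ by a homotopic (rel endpoints) arc $\zeta'$ in minimal position with $\beta$. By definition, $\zeta$ essentially intersects $n$ elevations exactly when $\zeta'$ meets $n$ elevations, endpoints included. Orient $\zeta'$ so that it starts on the level $\pm1$ endpoint, and let $L=(l_1,\ldots,l_m)$ be its intersection list with $l_1=\pm1$. By symmetry (reflecting the orientation of $\tilde S_\beta$), assume $l_1=1$. We must relate the number of elevations met to the shape of $L$.

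First we need that, for nondegenerate $\zeta'$, the number of elevations met equals the length $m$ of $L$. In the universal cover this follows from Lemma~\ref{lemma: essential1}. In the cyclic cover we must rule out a repeated elevation, as in Figure~\ref{fig:arcsameelevation}. Lemma~\ref{lemma:hitscompact} shows that repeated entries of $L$ occur only as a single adjacent pair $l_k=l_{k+1}$. Moreover, the two lists are monotone in absolute value away from that pair, so no level appears more than twice. It remains to see whether the repeated pair could be the same elevation. For the count in case (3), we want it to be a \emph{different} elevation at level $\pm1$. A repeated pair could lie on one elevation only when the sub-arc between them is homotopic into that elevation, and minimal position forbids this. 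The degenerate case is $L=(l,l)$ with $n=1$ or $2$, and it is checked directly against the three options.

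Next apply the dichotomy of Lemma~\ref{lemma:hitscompact}. In option (1), $L$ is strictly monotone with unit steps starting at $1$. If $L$ increases, it is $(1,2,\ldots,n)$ and the other endpoint lies at level $n$: case (1). If $L$ decreases, it passes through $0$ and is $(1,0,-1,\ldots,-(n-2))$: case (2). In option (2), the absolute values strictly decrease up to $k$ and then increase. Since $|l_1|=1$, this forces $k=1$. So $l_1=l_2=1$, which is the second level $\pm1$ elevation, followed by $2,\ldots,n-1$, giving other endpoint at level $n-1$: case (3). For a minimal-position arc, $l_2$ cannot be $-1$ because the arc would have to cross level $0$, and Lemma~\ref{lemma:hitscompact} then forces option (1). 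The converse follows by reading the same computation backwards. Given the endpoint data in (1) or (2), the minimal-position representative must be in option (1), since option (2) needs a repeated level, which would force the endpoint level to be $n-1$, not $n$ or $-(n-2)$. Its list is then determined, with length $n$. In (3), the hypothesis of a second level $\pm1$ intersection puts us in option (2) with $k=1$.

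The main obstacle is the cyclic-cover subtlety: showing that the count of elevations equals the list length, and separating cases (1) and (3) when both might seem to apply. For the latter, note that the endpoint level determines which option occurs. For the elevation count, the plan is to lift $\zeta'$ to $\tilde S$, where Lemma~\ref{lemma: essential1} applies. Since $\hat P$ preserves levels, two distinct lifted elevations at level $1$ that are adjacent in the list cannot map to the same elevation without creating a bigon downstairs.
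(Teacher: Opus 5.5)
Your main line matches the paper's proof: put $\zeta$ in minimal position, apply the dichotomy of Lemma~\ref{lemma:hitscompact}, and note that $|l_1|=1$ forces $k=1$ in option (2). The problems are in the two steps you add beyond it.

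First, your claim that a repeated pair $l_1=l_2=\pm1$ in $\tilde S_\beta$ must lie on two \emph{different} elevations is false. The paper's own Figure~\ref{fig:arcsameelevation} is a counterexample. An arc can leave a level $1$ elevation $E$ into the complementary region adjacent to $\hat\beta$, wind once around the core of $\tilde S_\beta$, and return to $E$. That sub-arc, together with a segment of $E$, is a loop homotopic to the core. So it is neither homotopic into $E$ nor the boundary of a bigon. In $\tilde S$, its lift joins two distinct lifts $\tilde E$ and $t\tilde E$, where $t$ generates the deck group of $\hat P$. These map to the same elevation, and no bigon appears downstairs. Hence your ``count equals list length'' step fails. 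The forward implication survives, but this must be handled as its own case rather than excluded. If both level-$1$ hits lie on one elevation, the list $(1,1,2,\ldots,m-1)$ meets only $m-1$ distinct elevations and its far endpoint is on level $m-1$, which is case (1). If you count with multiplicity instead, it is case (3), reading ``one other'' loosely.

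Second, your converse is circular. To exclude option (2) given the endpoint data in (1), you use that the arc meets $n$ elevations, which is the conclusion. In fact the ``if'' direction is false for case (1) as literally stated. Lemma~\ref{lemma:hitscompact} permits the list $(1,1',2,\ldots,n)$ with $1,1'$ distinct level-$1$ elevations. To realize it, take an arc in $\tilde S$ that crosses the region adjacent to the lift of $\hat\beta$, from one level-$1$ lift to a lift outside its $\langle t\rangle$-orbit, and then runs straight out to level $n$; project it to $\tilde S_\beta$. This arc has its other endpoint on level $n$, so (1) holds, yet it meets $n+1$ elevations. For (2) with $n\geq 2$ the converse does hold, but for a different reason. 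The arc must meet the separating curve $\hat\beta$, it is nondegenerate, and the last clause of Lemma~\ref{lemma:hitscompact} then forces option (1). What is true in general is the correspondence between the three list shapes and the three cases, with (1) implicitly excluding a second level-$\pm1$ hit. That correspondence is all the paper's one-line case matching establishes, and only the ``only if'' direction is used later, in Lemma~\ref{lemma:symint}. So either drop the converse or restate it with that exclusion.
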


\begin{proof}
    Without loss of generality, suppose $\zeta$ is in minimal position with $\beta$ (Lemma~\ref{lemma:archomminpos}) and has an endpoint at level 1. By Lemma~\ref{lemma:hitscompact}, either 1) $\zeta$ increases monotonically upwards in levels, 2) $\zeta$ increases monotonically downwards in levels, 3) $\zeta$ repeats level $1$ and then increases monotonically. Each case corresponds to the similarly numbered case in the lemma statement.
\end{proof}

We define the \textit{translates} of a connected subset $\zeta$ in the cyclic cover or universal cover as any connected subset such that projection to the surface is injective and agrees with the projection of $\zeta$. We use the following observations.

\begin{lemma}\label{lemma: arcs}
    Let $\beta$ and $\gamma$ be homotopic curves.
    \begin{enumerate}
        \item[(1)] If $\zeta$ is an arc between corresponding elevations of $\beta$ and $\gamma$ in $S_{(\gamma)},$ then every translate of $\zeta$ is an arc between corresponding elevations.
        \item[(2)] If $\zeta$ is an arc essentially intersecting $n$ elevations of $\beta$ in $S_{(\gamma)}$, then any translate of $\zeta$ essentially intersects $n$ elevations of $\beta$. 
    \end{enumerate}
    In particular, (1) holds if $\zeta$ is a single point.
\end{lemma}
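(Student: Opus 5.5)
The plan is to reduce both statements to the deck-group equivariance of the lifted data. A translate $\zeta'$ of $\zeta$ in $\tilde S_{(\gamma)}$ projects to the same subset of $S$ as $\zeta$. I would first show that, in the universal cover, $\zeta'=g\zeta$ for a deck transformation $g\in\pi_1(S)$. The justification is that $\zeta$ is connected and simply connected (an arc or a point), so both $\zeta$ and $\zeta'$ are lifts of the same path in $S$, and uniqueness of lifts forces them to differ by a deck transformation. In the cyclic cover $\tilde S_\gamma$ the deck group is only $\mathbb{Z}$, so not every translate is a deck image. There I would lift $\zeta$ and $\zeta'$ to $\tilde S$ via $\hat P$, argue in $\tilde S$, and push the conclusions back down with $\hat P$. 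This works because $\hat P$ preserves elevations, correspondence, and minimal position (as used in the proof of Lemma~\ref{lemma:hitscompact}).

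For (1), Lemma~\ref{lem:homotopiclift} gives a free homotopy $H\colon S^1\times I\to S$ from $\beta$ to $\gamma$, and its lift defines the correspondence between elevations. The key step is \emph{equivariance}: if $\tilde\beta$ corresponds to $\tilde\gamma$ via a lifted homotopy $\tilde H$, then $g\tilde H$ is also a lift of $H$. Since $g\tilde H$ starts at $g\tilde\beta$, uniqueness in Lemma~\ref{lem:homotopiclift} forces $g\tilde\beta$ to correspond to $g\tilde\gamma$. Now suppose $\zeta$ has endpoints $x\in\tilde\beta$ and $y\in\tilde\gamma$ with $\tilde\beta,\tilde\gamma$ corresponding. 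Then $g\zeta$ has endpoints $gx\in g\tilde\beta$ and $gy\in g\tilde\gamma$, which are again corresponding elevations. Taking $\zeta$ to be a single point gives the final remark; this is the case where the point lies on $\tilde\beta\cap\tilde\gamma$.

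For (2), by Lemma~\ref{lemma:archomminpos} I would homotope $\zeta$ rel endpoints to an arc $\zeta_0$ in minimal position meeting $n$ elevations of $\beta$. Applying $g$ gives $g\zeta_0$, which is homotopic rel endpoints to $g\zeta$. Because $g$ is a homeomorphism preserving $P^{-1}(\beta)$ setwise, $g\zeta_0$ is transverse to $P^{-1}(\beta)$ and forms no bigons with it. Degeneracy is preserved for the same reason. Also, $g$ permutes elevations bijectively, so $g\zeta_0$ meets exactly $n$ elevations. Hence $g\zeta$ essentially intersects $n$ elevations.

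The main obstacle is the cyclic-cover case, where a translate need not be a deck image of $\zeta$. I would handle it by choosing lifts $\tilde\zeta,\tilde\zeta'$ to $\tilde S$ with $\tilde\zeta'=g\tilde\zeta$. I then need that a homotopy rel endpoints in $\tilde S_\gamma$ lifts to one in $\tilde S$, and that a minimal-position representative downstairs lifts to one upstairs. Counting elevations also needs care, since distinct elevations in $\tilde S$ can map to the same elevation in $\tilde S_\gamma$ (compare Figure~\ref{fig:arcsameelevation}). To avoid this, I would count intersections with elevations directly via the level sequence: by Lemma~\ref{lemma:hitscompact}, $\hat P$ restricted to the lifted minimal arc is injective on the elevations it meets, except possibly at the single repeated level. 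Both arcs have the same projected pattern in $S$, so the counts agree.
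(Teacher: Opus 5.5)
Your route is the paper's: pass to $\tilde S$ via $\hat P$, note that translates there differ by a deck transformation $g$, and use that $g$ permutes elevations and respects the correspondence. Your uniqueness-of-lifted-homotopy argument for the last point is cleaner than the paper's appeal to boundary points. In $\tilde S$, (1) and (2) go through as you write them. The gap is the step ``lift to $\tilde S$, argue there, push down'' for (1) in $\tilde S_\gamma$. That $\hat P$ \emph{preserves} correspondence is the wrong direction. You need the converse: if $\zeta\subset\tilde S_\gamma$ joins corresponding elevations $B\leftrightarrow G$, then a lift $\tilde\zeta$ joins corresponding lines of $\tilde S$. Write $\tilde\beta_0,\tilde\gamma_0$ for the lines over $\hat\beta,\hat\gamma$ and $t$ for the generator of their stabilizer. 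If $\tilde\zeta$ starts on $h\tilde\beta_0$ with $h\notin\langle t\rangle$ (i.e.\ $B$ is noncompact), then $\hat P^{-1}(G)=\{t^kh\tilde\gamma_0\}_{k\in\mathbb Z}$. Nothing prevents $\tilde\zeta$ from ending on $t^kh\tilde\gamma_0$ with $k\neq 0$. In that case the translate $\hat P(h^{-1}\tilde\zeta)$ joins $\hat\beta$ to $\hat P(h^{-1}t^kh\tilde\gamma_0)$. This is not $\hat\gamma$, because $\langle t\rangle$ is self-normalizing in $\pi_1(S)$.

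This genuinely happens, already for points. Take $\gamma$ nonseparating and let $\tilde D$ be the component of $\tilde S\setminus P^{-1}(\gamma)$ to the right of $\tilde\gamma_0$. Take an embedded arc in $S$, with interior disjoint from $\gamma$, that leaves $\gamma$ on its right, runs once around $\gamma$ along its left side, and returns to $\gamma$ from the right along its outgoing route. Its lift from $\tilde\gamma_0$ stays in $\tilde D$ and runs once along a boundary line $L=v\tilde\gamma_0$ of $\tilde D$, where $v\notin\langle t\rangle$. It ends on $s\tilde\gamma_0$ with $s=vt^{\pm1}v^{-1}$. Let $\beta$ be obtained from $\gamma$ by a finger move along this arc, so $\tilde\beta_0$ crosses $s\tilde\gamma_0$ at some $\tilde x$. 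Then $\hat P(v^{-1}\tilde x)$ lies on $\hat P(v^{-1}\tilde\beta_0)$ and on $\hat P(t^{\pm1}v^{-1}\tilde\gamma_0)=\hat P(v^{-1}\tilde\gamma_0)$, which are a corresponding pair. Its translate $\hat P(\tilde x)$, however, lies only on $\hat\beta$ and on the level-one elevation $\hat P(s\tilde\gamma_0)\neq\hat\gamma$. So (1), even for single points, is false for noncompact elevations of $\tilde S_\gamma$. The paper's proof makes the same reduction and has the same gap.

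What your argument does establish is (1) in $\tilde S$, and (1) in $\tilde S_\gamma$ whenever a lift of $\zeta$ joins corresponding lines. In particular it holds when the elevations are $\hat\beta,\hat\gamma$, since $\hat P^{-1}(\hat\beta)=\tilde\beta_0$ and $\hat P^{-1}(\hat\gamma)=\tilde\gamma_0$. That covers the uses in Lemmas~\ref{prop:nonsepcurve} and~\ref{lemma:symint}, but not the noncompact level-one use in Lemma~\ref{lowerbound}.

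For (2), say explicitly that you count intersections with multiplicity, i.e.\ the length of the level list of a minimal-position representative. The number of \emph{distinct} elevations is not translation invariant in $\tilde S_\gamma$. An arc meeting one noncompact elevation twice, as in Figure~\ref{fig:arcsameelevation}, has a translate starting on the compact elevation that meets two distinct elevations.
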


\begin{proof}
    We begin with several observations. First, the projection map $\hat P:\tilde S\to \tilde S_\gamma$ takes pairs of corresponding elevations in the universal cover to pairs of corresponding elevations. 

    Second, essential intersection number is preserved by $\hat P$ (this is the bigon criterion). In combination with the first observation, this implies that we can prove (1) and (2) for annular covers by lifting, performing the proof in the universal cover, and then projecting back down. Thus we will prove (1) and (2) only in the universal cover.

    (1) follows from the facts that: (a) every deck transformation in $\tilde S$ takes corresponding elevations of $\beta$ and $\gamma$ to corresponding elevations since it acts by homeomorphisms on $\tilde S$ and homeomorphisms respect boundary points, and (b) deck transformations in $\tilde S$ act transitively on lifts of $\gamma$.

    (2) follows from the observation that since deck transformations are homeomorphisms, they preserve essential intersection number of an arc with $\beta$ or $\gamma$.
\end{proof}

\section{Symmetry of crossing numbers}\label{sec:five} 

A natural question is to ask whether crossing number is symmetric. Note the crossing number could be defined using the number of \textit{elevations} intersected (with multiplicity) rather than the number of levels, and this gives a symmetric quantity. However, we chose to define crossing number using levels to give a more robust distance bound in later propositions. We will show that the crossing number is symmetric if $\alpha$ is separating, but is not necessarily symmetric if $\alpha$ is nonseparating. 

    \p{Crossing number bounds for nonseparating curves} We begin with an explicit example demonstrating that crossing number is not symmetric. 
    
\begin{example}
    In Figure \ref{fig:cross3}, we present nonseparating curves $\beta$ and $\gamma$ such that $C_{\gamma}(\beta) = 3$ but $C_{\beta}(\gamma) = 2$. In both cases, a compact elevation for a chosen curve hits exactly three elevations of the other curve, but the compact elevation $\hat{\gamma}$ meets two elevations at level 1.
    
    The curve $\beta$ in this example was constructed by applying two point push maps on a curve parallel to $\gamma.$ The point push maps were chosen so that one begins by pushing $\beta$ to the left and the other to the right until it hits $\gamma$.  This ensures the final result $\beta$ must lift to a compact elevation hitting levels $-1$ through 1. However, the point push maps were chosen to push $\beta$ in the same direction through $\gamma$, so any elevations of $\beta$ that intersect $\hat{\gamma}$ have to come from the same side of $\hat{\gamma}$. A more detailed explanation of how these curves are constructed can be seen in Figure~\ref{fig:crossn}.
\end{example}

    Expanding on this example, we have the following bounds on crossing number.    

    \begin{lemma} \label{prop:nonsepcurve} Let $S=S_g$ with $g\geq 2$. Two isotopic nonseparating curves $\beta$ and $\gamma$ are either disjoint or  
        \[\lfloor \frac{\max\{C_{\gamma}(\beta), C_{\beta}(\gamma)\}}{2}\rfloor +1   \leq \min\{C_{\gamma}(\beta), C_{\beta}(\gamma)\}.\]
    \end{lemma}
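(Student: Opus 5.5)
Without loss of generality assume $C_\gamma(\beta)\geq C_\beta(\gamma)$, set $m=C_\gamma(\beta)$, and assume $\beta\cap\gamma\neq\emptyset$. Then $\min\{C_\gamma(\beta),C_\beta(\gamma)\}=C_\beta(\gamma)$, and the claim is $C_\beta(\gamma)\geq \lfloor m/2\rfloor+1$. The plan is to show that the levels of $\gamma$ met by $\hat\beta$ force $\hat\gamma$ to meet a comparable number of levels of $\beta$. This uses the correspondence of elevations (Lemma~\ref{lem:homotopiclift}), the structure of minimal position arcs (Lemmas~\ref{lemma:hitscompact} and~\ref{lemma: essential2}), and translation invariance (Lemma~\ref{lemma: arcs}).

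\textbf{Step 1: the levels met by $\hat\beta$ form an interval containing $0$.} Since $\hat\beta$ is homotopic to $\hat\gamma$ in $\tilde S_\gamma$, and $\hat\gamma$ separates the annular cover, the levels of $\gamma$ that $\hat\beta$ meets form a consecutive range $\{-a,\ldots,b\}$ with $a,b\geq 0$ and $a+b+1=m$. The range contains $0$ because the curves intersect and every elevation of $\beta$ other than $\hat\beta$ is noncompact. By pigeonhole one side is long: $\max\{a,b\}\geq \lceil (m-1)/2\rceil$. Say $b$ is the larger, so $\hat\beta$ reaches a level-$b$ elevation $\tilde\gamma_b$ of $\gamma$.

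\textbf{Step 2: transfer this to an arc on the $\gamma$ side.} Take a point $x\in\hat\beta\cap\tilde\gamma_b$ and a subarc $\zeta\subset\hat\beta$ from a point of $\hat\beta\cap\hat\gamma$ to $x$. Homotope $\zeta$ rel endpoints into minimal position (Lemma~\ref{lemma:archomminpos}). Since $\zeta$ starts on level $0$, Lemma~\ref{lemma:hitscompact}(1) says it crosses levels $0,1,\ldots,b$ monotonically. Now apply a deck translation, or pass to the universal cover and use Lemma~\ref{lemma: arcs}, to move $x$ so that it lies on $\hat\gamma$. The image of $\tilde\gamma_b$ is then $\hat\gamma$, and the image of $\hat\beta$ is an elevation of $\beta$ at some level $\ell$ relative to $\beta$. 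The translated arc joins $\hat\gamma$ to an elevation of $\gamma$ lying $b$ levels away. Pairing elevations via Lemma~\ref{lem:homotopiclift}, the corresponding elevations of $\beta$ also sit about $b$ levels apart. Because the curves are nonseparating, the tree of levels is not symmetric, and the level structures of $\beta$ and $\gamma$ are related through this correspondence. Hence $\hat\gamma$ must meet elevations of $\beta$ on at least $\lfloor m/2\rfloor$ distinct nonzero levels on one side, and it always meets level $0$ of $\beta$, namely $\hat\beta$ itself. Counting with Lemma~\ref{lemma: essential2} then gives $C_\beta(\gamma)\geq \lfloor m/2\rfloor+1$.

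\textbf{Main obstacle.} The difficulty is justifying in Step 2 that $\hat\gamma$ actually crosses an elevation of $\beta$ at a level whose absolute value is about $b$, rather than passing alongside it. I would argue by contradiction in the universal cover, as follows.
\begin{enumerate}
\item Suppose $\hat\gamma$ avoids all levels of $\beta$ of absolute value at least $\lfloor m/2\rfloor+1$.
\item Then $\hat\gamma$ stays inside a region bounded by two elevations of $\beta$, one on each side.
\item Use the fact that corresponding elevations share endpoints at infinity (the homotopy fixes boundary points). Together with Lemma~\ref{lemma: essential1}, this should force the level-$b$ elevation of $\gamma$ met by $\hat\beta$ to lie within fewer than $b$ levels of $\hat\beta$.
\item This contradicts the monotone crossing established in Step 2.
\end{enumerate}

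The floor and the $+1$ come from the asymmetry phenomenon of the Example: repeated levels (option (2) of Lemma~\ref{lemma:hitscompact}) can at most halve the count, since one side is lost.
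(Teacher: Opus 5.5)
There is a genuine gap. Your outline follows the paper's route: pigeonhole to get a side with $b\ge\lfloor m/2\rfloor$, take $\zeta\subset\hat\beta$ from level $0$ to level $b$, then move its far endpoint onto $\hat\gamma$ by taking a translate. But the step that actually proves the lemma is missing. After translating, you leave the level $\ell$ of the $\beta$-elevation containing $\zeta'$ undetermined. Neither ``the level structures are related through this correspondence'' nor your contradiction sketch (where the data ``should force'' something) pins it down.

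The missing idea is to track the \emph{other} endpoint of $\zeta$. It starts at $x_0\in\hat\beta\cap\hat\gamma$, which lies on a pair of corresponding elevations. By Lemma~\ref{lemma: arcs}(1) for a single point, its translate $x_0'$ also lies on a pair of corresponding elevations. Since $\zeta'$ essentially meets $b+1$ elevations of $\gamma$ and has an endpoint on $\hat\gamma$, Lemma~\ref{lemma:hitscompact} puts $x_0'$ on a level $\pm b$ elevation of $\gamma$. So the $\beta$-elevation containing $\zeta'$ is the corresponding one, at level $\pm b$, and it meets $\hat\gamma$. As $\hat\gamma$ is connected and meets $\hat\beta$, it meets levels $0,\dots,\pm b$ of $\beta$, giving $C_\beta(\gamma)\ge b+1\ge\lfloor m/2\rfloor+1$. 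No contradiction argument is needed. Your worry about ``crossing versus passing alongside'' is also moot, since crossing numbers count levels \emph{intersected}.

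Separately, your Step~1 claim that the range of levels contains $0$ is false for nonseparating curves. $\beta\cap\gamma\neq\emptyset$ does not force $\hat\beta\cap\hat\gamma\neq\emptyset$; Lemma~\ref{1imples0} needs the curves to be separating. For a counterexample, take a parallel copy of $\gamma$ on its right. Finger-push it along an arc that leaves to the right, travels through the complement of the annulus they cobound (connected, since $\gamma$ is nonseparating) to the left side of $\gamma$, and crosses $\gamma$ once. The new $\hat\beta$ meets a level $1$ elevation of $\gamma$ but misses $\hat\gamma$. Here your Step~2 has no starting point $x_0$, and $\hat\gamma$ need not meet level $0$ of $\beta$.

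This is the paper's Case~2, and it needs its own argument:
\begin{enumerate}
\item By essentiality of $\hat\beta$, the levels it meets are $1,\dots,b$ (or their negatives), so $m=b$.
\item Join $\hat\gamma$ to $\hat\beta$ by an arc, which connects corresponding elevations, so its lifts do too. Concatenate it with a subarc of $\hat\beta$ ending on level $b$, and lift as above; this shows $\hat\gamma$ meets a level $\pm b$ elevation of $\beta$.
\item Since $\hat\gamma$ is essential, it cannot lie in the disk cut off by a level $\pm1$ elevation. So it meets levels $\pm1,\dots,\pm b$, and $b\ge\lfloor b/2\rfloor+1$ finishes this case.
\end{enumerate}
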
 

           \begin{figure}[h]
\begin{center}
\includegraphics[width=0.6\textwidth]{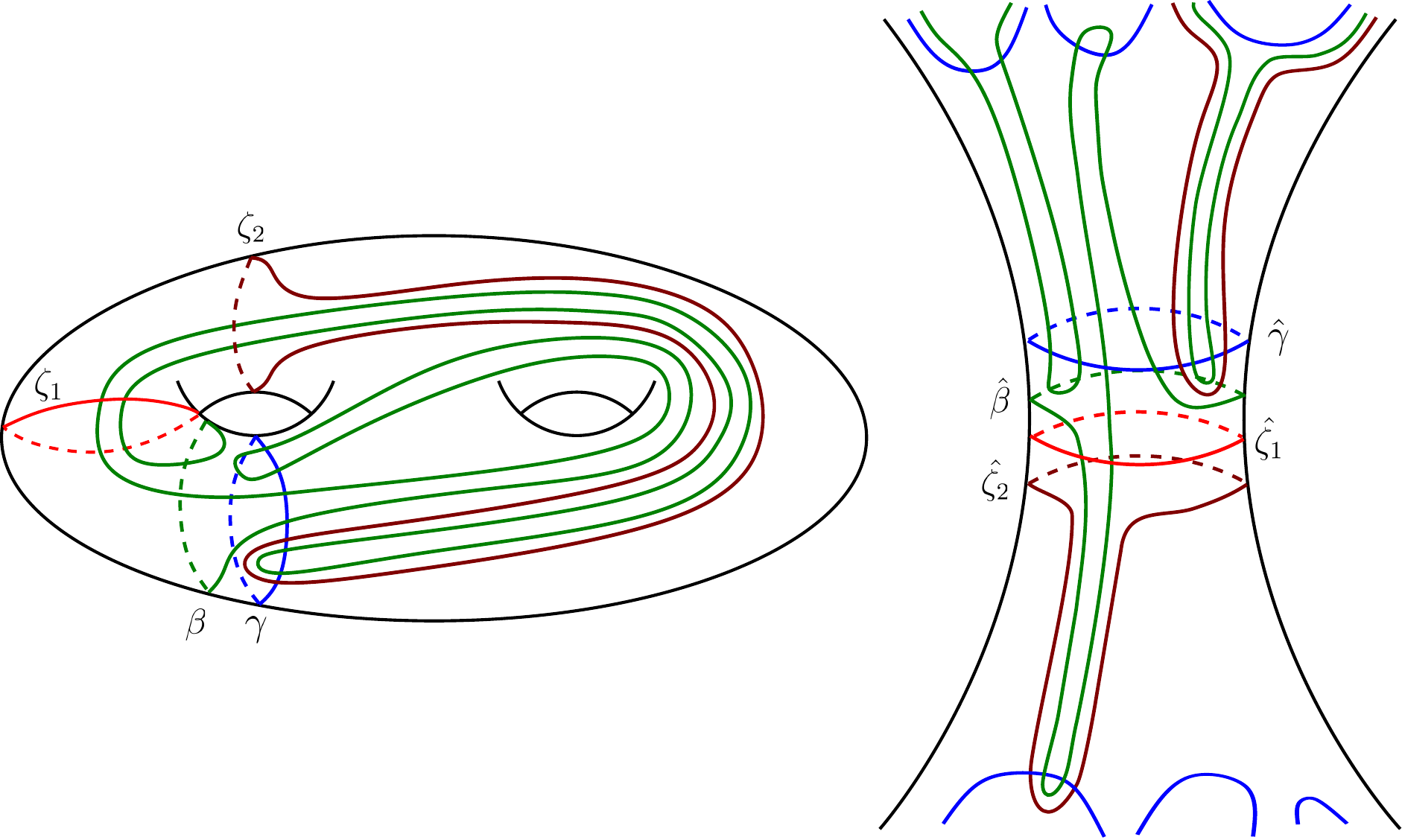}
\caption{Left: $\beta$ and $\gamma$ are distance 3 curves, and a path between them is $\gamma-\zeta_1-\zeta_2-\beta.$ Right: we see the lift of the curves on the left to the cyclic cover. Notice that $C_\gamma(\beta)=3$ while $C_\beta(\gamma)=2.$}
\label{fig:cross3}
\end{center}
\end{figure}

    \begin{figure}[h]
\begin{center}
\begin{tikzpicture} [scale=0.9, transform shape]
    \node[anchor = south west, inner sep = 0] at (0,0) {\includegraphics[width=0.4\textwidth]{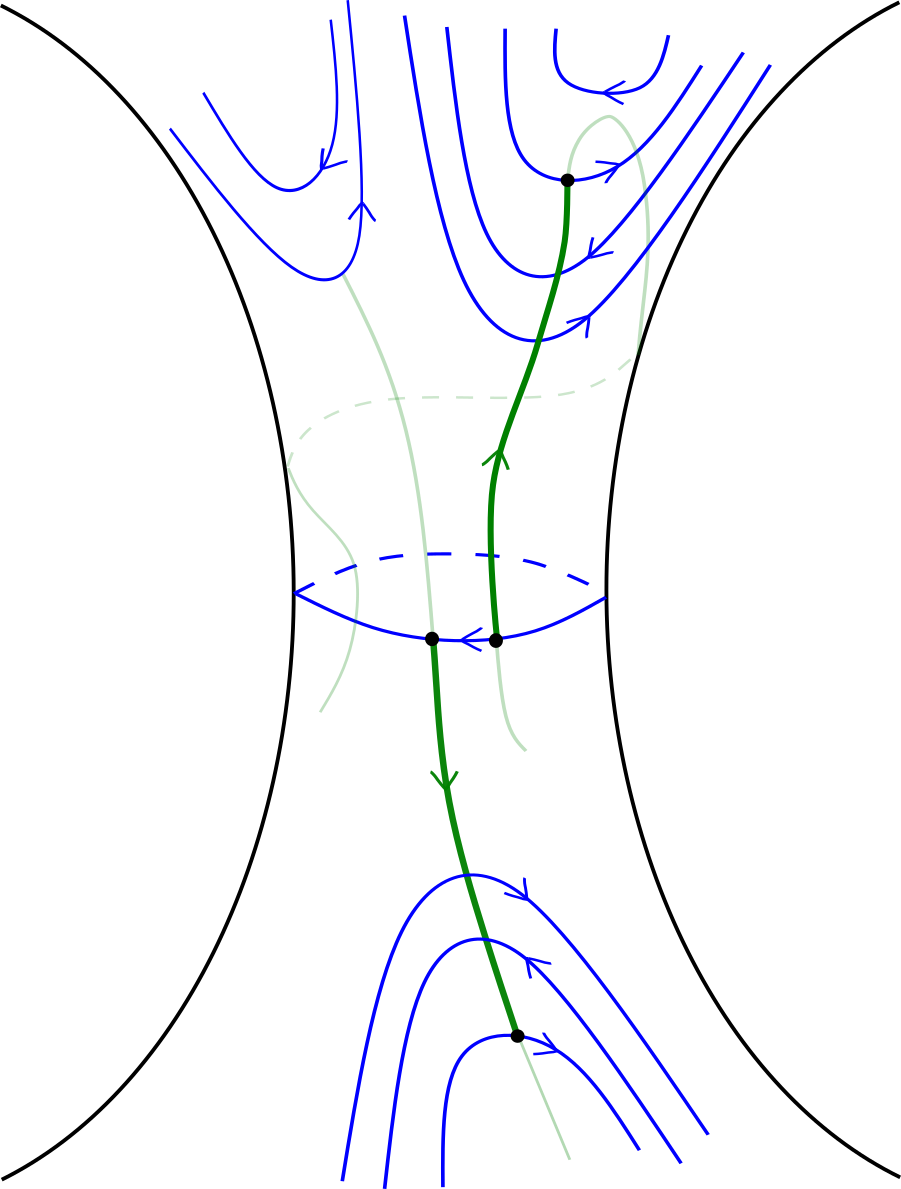}};
    
    \node at (4.4,4){$\hat\gamma$};
    \node at (3.4,3.5){$x$};
    \node at (3.7,5){$\beta_1$};
    \node at (3.85,7.1){$y$};
    \node at (2.7,3.5){$y'$};
    \node at (3.4,0.77){$x'$};
    \node at (2.7,2.8){$\beta_2$};

    \end{tikzpicture}
\caption{A schematic of all arcs and points used in the proof of Case 1 of Lemma~\ref{prop:nonsepcurve}}\label{fig:4.7}
\end{center}
\end{figure}

\begin{SCfigure}[0.7][htp] 
\begin{tikzpicture}
    \node[anchor = south west, inner sep = 0] at (0,0) {\includegraphics[width=0.5\textwidth]{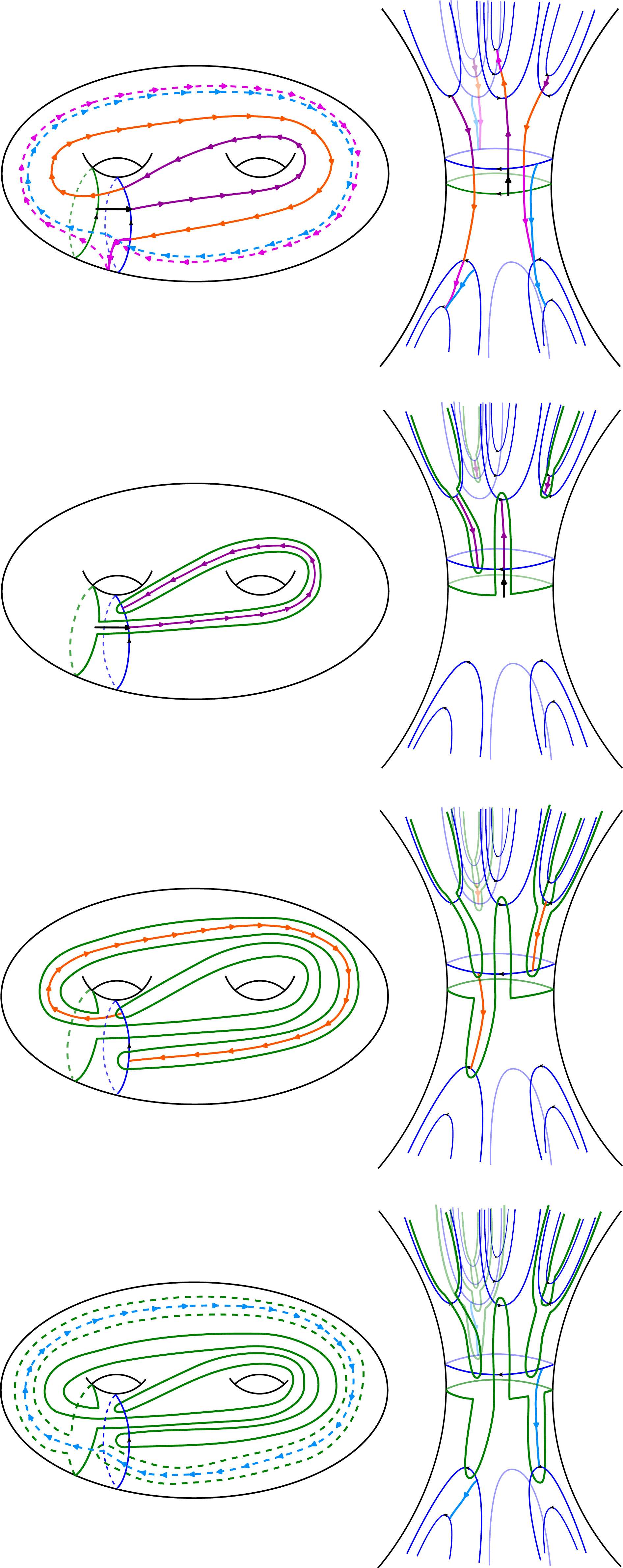}};
    
    \node at (7,16.9){$\hat\beta$};
    \node at (7,17.45){$\hat\gamma$};
    \node at (7,12){$\hat\beta$};
    \node at (7,7.05) {$\hat\beta$};
    \node at (7,2.15){$\hat\beta$};

    \end{tikzpicture}

\caption[]{
This is the beginning (Steps 0) of a sequence of point pushes that allows us to achieve the lower bound in Lemma~\ref{prop:nonsepcurve}. More descriptions can be found in Table~\ref{table:table}. \medskip
\par In the top picture, we show the arcs that will be pushed along throughout. Since $\beta$ and $\gamma$ are disjoint, $(C_\gamma(\beta),C_\beta(\gamma))=(0,0).$
\vspace{1.1cm}
\par  In the second picture, we show the creation of the first tail by pushing along the black and purple arcs. In the right hand side, we see that there is a level 1 elevation of $\beta$ coming down to intersect $\hat\gamma$, so $(C_\gamma(\beta),C_\beta(\gamma))=(2,2).$
\vspace{1.2cm}
\par In the third picture, we show the creation of the second tail by pushing along the orange arc. A tail of $\hat\beta$ is reaching down to level $-1$ of $\gamma$ for the first time, thus increasing $C_\gamma(\beta)$ by 1. A new level 1 elevation of $\beta$ is coming down to intersect $\hat\gamma$, but that does not influence $C_\beta(\gamma)$ since it is not a new level (just a new elevation). As a result, $(C_\gamma(\beta),C_\beta(\gamma))=(3,2).$ 
\vspace{1cm}
\par
 In the fourth picture, we show the creation of the third tail by pushing along the blue arc. Since no new levels of $\beta$ nor $\gamma$ are intersected, $(C_\gamma(\beta),C_\beta(\gamma))=(3,2)$ once more. \vspace{1.7cm}
}
\label{fig:crossn}
\end{SCfigure}

    \begin{proof} Let $n$ be the maximum of the two crossing numbers, and without loss of generality say $C_{\gamma}(\beta) = n$. That is, the compact elevation of $\beta$, denoted $\hat{\beta}$, intersects $n$ levels of $\gamma$. Likewise, let $\hat{\gamma}$ be the compact elevation of $\gamma$. We are done if we show $\hat{\gamma}$ intersects at least $\lfloor \frac{n}{2}\rfloor +1$ levels of $\beta$.
    
    \pit{Case 1: compact elevations intersect} Suppose $\hat{\beta}$ and $\hat{\gamma}$ intersect. Since $\hat{\beta}$ intersects $n$ levels of $\gamma$ including the compact elevation at level 0, there are $n-1$ intersected non-0 levels of $\gamma$. Some of these are positive and others are negative. By the pigeon hole principle, $\hat{\beta}$ intersects level $\pm \lceil\frac{ n-1 }{2}\rceil=\pm \lfloor \frac{n}{2}\rfloor$ of $\gamma$. Without loss of generality, it intersects level $m = \lfloor\frac{n}{2}\rfloor$.

   We will produce a level $\pm m$ lift of $\beta$ that intersects $\hat \gamma.$ Let $\beta_1\subset \hat\beta$ be an arc with endpoints on levels 0 and (without loss of generality) $m$ of $\gamma.$ Call these endpoints $x$ and $y,$ respectively.
    
    Let $y'\in \hat\gamma \cap p^{-1}(p(y)).$ There is a unique lift of $p(\beta_1)$ with an endpoint at $y'$; call this lift $\beta_2.$ Let $x'\in p^{-1}(p(x))$ be the other endpoint of $\beta_2.$  

    Since $\beta_1$ is an arc from level 0 to level $m$ of $\gamma$, it essentially intersects $m+1$ elevations of $\gamma$ by Lemma~\ref{lemma:hitscompact}. Since $\beta_2$ is another lift of the same arc as $\beta_1,$ it must also essentially intersect $m+1$ elevations of $\gamma$ by Lemma~\ref{lemma: arcs}. 

    Since $x$ is an intersection of compact lifts of $\beta$ and $\gamma,$ by Lemma~\ref{lemma: arcs}, $x'$ must be the intersection of two lifts of $\beta$ and $\gamma$ at the same level: both $m$ or $-m.$ 
    We conclude that $\beta_2$ must be a subarc of a lift of $\beta$ at level $\pm m$, so $\hat\gamma$ intersects level $\pm m$ of $\beta.$

    Since $\hat\gamma$ intersects levels 0 through $\pm m$ of $\beta,$ it intersects at least $m+1= \lfloor \frac{n}{2}\rfloor +1$ levels of $\beta,$ as desired.

\pit{Case 2: compact elevations are disjoint} When the compact elevations do not intersect, we can modify the previous proof by attaching $\hat{\beta}$ and $\hat{\gamma}$ with an arc $\zeta$. By concatenating $\zeta$ with an arc in $\hat{\beta}$, we get an arc $\beta_1$ from level 0 to level $\pm n$. Note that any other lift of $p(\zeta)$ to the cyclic cover is an arc between corresponding elevations by Lemma~\ref{lemma: arcs}. A similar argument to Case 1 shows we have a lift of $p(\beta_1)$ from level 0 to level $\pm n$, so $C_\gamma(\beta)=n$. 
\end{proof}

\begin{lemma}\label{lemma:achievedminimum}
    The bound in Lemma \ref{prop:nonsepcurve} is optimal.
\end{lemma}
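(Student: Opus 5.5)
We will show optimality by constructing, for every $n\geq 2$, a pair of isotopic nonseparating curves $\beta,\gamma$ with $C_\gamma(\beta)=n$ and $C_\beta(\gamma)=\lfloor \frac{n}{2}\rfloor+1$. This is exactly equality in Lemma~\ref{prop:nonsepcurve}. The construction is the iterated point-push procedure begun in Figure~\ref{fig:crossn} and in the Example preceding Lemma~\ref{prop:nonsepcurve}.

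\textbf{Setup.} Start with disjoint isotopic nonseparating curves $\beta_0,\gamma$, bounding an annulus. Then $(C_\gamma(\beta_0),C_\beta(\gamma))=(0,0)$. Since $\gamma$ is nonseparating, there are arcs from $\beta_0$ that leave the annulus, travel around the surface, and return to approach $\gamma$ from the side opposite $\beta_0$.

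\textbf{Tails.} Pushing a small subarc of $\beta_0$ along such an arc creates a ``tail'' of $\hat\beta$ in $\tilde S_\gamma$. The tail crosses one further level of $\gamma$ on one side. Dually, a new elevation of $\beta$ comes to cross $\hat\gamma$ from a fixed side, as in the second and third pictures of Figure~\ref{fig:crossn}.

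\textbf{Inductive step.} Alternate the pushes so that the tails of $\hat\beta$ reach out to levels $1,-1,2,-2,\dots$ of $\gamma$. Each new extremal level increases $C_\gamma(\beta)$ by exactly one. We choose all pushes to pass through $\gamma$ in the \emph{same} direction, and each new tail to follow the previous one but travel one more fundamental domain. Then the elevations of $\beta$ meeting $\hat\gamma$ all approach from one side. Their levels are $0,1,\dots,m$, where $m$ is the largest $|l|$ reached by $\hat\beta$. With $n$ levels reached symmetrically, $m=\lfloor n/2\rfloor$.

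\textbf{Verifying the crossing numbers.} For each stage we compute both crossing numbers.
\begin{itemize}
\item For $C_\gamma(\beta)$, we apply Lemma~\ref{lemma:hitscompact} to subarcs of $\hat\beta$ through level~$0$. Such arcs are monotone, so the levels hit are exactly those reached by the tails: $n$ levels, including level $0$.
\item For $C_\beta(\gamma)$, we use Lemma~\ref{lemma: arcs} and the translation argument of Case~1 in the proof of Lemma~\ref{prop:nonsepcurve}. An elevation of $\beta$ at level $l$ meets $\hat\gamma$ only if some tail has a translate reaching level $|l|$ of $\gamma$. Because every push crosses $\gamma$ in the same direction, these elevations have levels of a single sign, which gives the count $m+1$.
\end{itemize}
We must also check that all pushes can be performed disjointly, so that $\beta$ remains simple, and that $\beta$ remains isotopic to $\gamma$. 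The latter holds because point pushes are isotopic to the identity in $S$.

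\textbf{Main obstacle.} The hard step is the upper bound on $C_\beta(\gamma)$. We must rule out any elevation of $\beta$ of negative level, or of level exceeding $m$, crossing $\hat\gamma$. We will handle this by tracking the curves in the universal cover and putting every arc in minimal position via Lemma~\ref{lemma:archomminpos}. The same-direction choice of pushes then forces every translate of a tail to enter $\hat\gamma$ from one side. This is the content the figures suggest, and we will make it rigorous by an explicit description of all elevations at each step, as in Table~\ref{table:table}.
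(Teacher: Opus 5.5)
Your strategy matches the paper's. You start from $\beta_0$ disjoint from $\gamma$, push tails that alternately reach levels $1,-1,2,-2,\dots$, and keep only one sign of $\beta$-levels meeting $\hat\gamma$, targeting $(n,\lfloor n/2\rfloor+1)$, which are the paper's pairs $(2k-2,k)$ and $(2k-1,k)$. But the lemma's content is the existence, for every $n$, of an \emph{embedded} curve realizing this tail pattern with tails in minimal position. That is exactly what you leave open (``we must also check that all pushes can be performed disjointly''), and it is not routine.

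A tail pushed along an arc that avoids $\beta$ and stays outside the other tails reaches at most level $\pm1$. Its first crossing of a nonzero-level elevation puts it in the collar between $\gamma$ and $\beta_0$. From there it can only cross $\gamma$ again, which creates a bigon (an arc in an annulus with both ends on one boundary component is boundary-parallel), or cross $\beta$, which destroys simplicity. To go further, a tail must be run into the channel of a previously pushed tail whose mouth opens onto the collar, so the tails must be threaded through one another in a carefully chosen order. That is the missing idea, and the paper's schedule supplies it. It uses three tails, pushed cyclically along concatenated parallel copies of the blue arc so that they stack into each other, with Tail~3 only making room, and it tracks levels in Table~\ref{table:table}. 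Your ``each new tail follows the previous one but travels one more fundamental domain'' might become a different nesting scheme, but as written it is neither specified nor checked, so the curves are never produced.

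Conversely, the step you call the main obstacle is easy. Since $\hat\beta$ and $\hat\gamma$ project homeomorphically onto $\beta$ and $\gamma$, both $\hat\beta\cap p^{-1}(\gamma)$ and $\hat\gamma\cap p^{-1}(\beta)$ are in bijection with $\beta\cap\gamma$. Suppose $z\in\hat\beta$ lies on a level-$l$ elevation $E$ of $\gamma$, and take a deck transformation of $\tilde S$ sending a lift of $E$ to the lift of $\hat\gamma$. It is an orientation-preserving automorphism of the tree of elevations. So it sends the lift of $\hat\beta$ to a $\beta$-elevation whose image in $\tilde S_\gamma$ has level $\pm|l|$ and meets $\hat\gamma$ over $p(z)$. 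The sign depends only on which side of $E$ (in its induced orientation) $\hat\gamma$ lies.

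Hence $C_\beta(\gamma)\le m+1$ whenever $\hat\gamma$ lies on the same side of every nonzero-level elevation that $\hat\beta$ crosses. This is the precise form of your ``same direction'' condition, and it concerns only nonzero levels. Lemma~\ref{prop:nonsepcurve} then forces equality.

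Also, Lemma~\ref{lemma:archomminpos} is not the right tool for the tails. It gives a homotopic arc in the cover with no control over embeddedness in $S$ or disjointness from $\beta$, and homotoping $\beta$ afterward changes the crossing numbers. The push arcs must be chosen simple, disjoint from the current $\beta$, and bigon-free with $\gamma$ from the start, which is again the construction your proposal does not give.
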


\begin{proof}
We give an explicit construction in genus two that extends to higher genus surfaces by attaching handles. We are inspired by the curves in Figure~\ref{fig:cross3}. In particular, we produce pairs of curves $\beta$ and $\gamma$ with crossing numbers $(C_\gamma(\beta),C_\beta(\gamma))=(2n-2,n)$ and $(C_\gamma(\beta),C_\beta(\gamma))=(2n-1,n)$ for all $n\geq 2.$ 

The above is accomplished by applying a sequence of point pushes to the curves $\beta$ and $\gamma$ pictured in the top left image of Figure~\ref{fig:crossn}. (The arcs are shown in isolation in the remainder of Figure~\ref{fig:crossn}.) These point pushes occur in small neighborhoods of the pictured arcs (black, purple, orange, pink, and blue), as in the top left of Figure~\ref{fig:crossn}. The main ideas are that: (1) following every point push, the changed parts of $\beta$ are in a small neighborhood of the arcs pushed along both in $S$ and in $\tilde S_\gamma$, (2) each push is along a simple arc whose lifts are in minimal position with $\gamma$ in $\tilde S_\gamma,$ and (3) the pushes are chosen in a way such that the resulting curves are always simple.

\pit{Describing the point push sequence} We now give a list of steps to produce the sequence of curves starting with an initial step that gives a modified version of the example from Figure \ref{fig:cross3}. 

{\bf Step 0:} The first point push is along black, then purple. (We will call the part of $\beta$ pushed here \emph{Tail 1}.) This push is illustrated in the second image in Figure~\ref{fig:crossn}. The second point push is along the orange arc. (We will call the part of $\beta$ pushed here \emph{Tail 2.}) This push is illustrated in the third image of Figure~\ref{fig:crossn}. The third push is along the blue arc. (We will call the part of $\beta$ pushed here \emph{Tail 3}.) This push is illustrated in the fourth image of Figure~\ref{fig:crossn}.

{\bf Step 1:} Push Tail 1 along the orange arc. 

{\bf Step 2:} Push Tail 2 along the pink arc.

{\bf Step 3:} Push Tail 1 along the pink arc.

At this point, all three tails are lined up around the back of the surface, with Tail 3 being the outermost, then Tail 2, then Tail 1. The idea of the next pushes is that the tails will take turns looping around blue and stacking into each other.

We note that, from now on, when we say to ``push along the blue arc'', we actually consider many parallel copies of the blue arc such that consecutive copies share one endpoint so that they could be concatenated to create a longer arc (and similarly, the first blue arc shares its first endpoint with the endpoint of the pink). This is so that the total push for each tail of $\beta$ will be along a simple arc in $\gamma$ that forms no bigons with $\gamma$ in $S$---and therefore lifts to arcs that are in minimal position with $\gamma$ in $\tilde S_\gamma.$

{\bf Step $k$ for $k\geq 4$:} If $k= 1\mod 3$, push Tail 3 along the blue arc. If $k= 2 \mod 3,$ push Tail 2 along the blue arc. If $k=0 \mod 3,$ push Tail 1 along the blue arc.

This sequence of point pushes, along with its impact on crossing numbers, is summarized in Table~\ref{table:table}.

\begin{table}[h]
    \centering
    \begin{tabular}{|c|c|c|c|c|c|c|}
        \hline
        \textbf{Step} & \textbf{Push} & \textbf{Tail 1 intersects} & \textbf{Tail 2 intersects} & \textbf{Tail 3 intersects} & \textbf{$C_\gamma(\beta)$} & \textbf{$C_\beta(\gamma)$} \\
        \hline
        0 & T1 Purple & 0,1 & none & none & 2 & 2 \\
        \hline
        0 & T2 Orange & 0,1 & $-1 $& none & 3 & 2 \\
        \hline
       0 & T3 Blue & 0,1 & $-1$ & $-1$ & 3 & 2 \\
        \hline
        1 & T1 Orange & 0,1,2 & $-1$ & $-1$ & 4 & 3 \\
        \hline
        2 & T2 Pink & 0,1,2 & $-1,-2$ & $-1$ & 5 & 3 \\
        \hline
        3 & T1 Pink & 0,1,2,3 & $-1,-2$ & $-1$ & 6 & 4 \\
        \hline
        4 & T3 Blue & 0,1,2,3 & $-1,-2$ & $-1,-2$ & 6 & 4 \\
        \hline
        5 & T2 Blue & 0,1,2,3 & $-1,-2,-3$ & $-1,-2$ & 7 & 4 \\
        \hline
        6 & T1 Blue & 0,1,2,3,4 & $-1,-2,-3$ & $-1,-2$ & 8 & 5 \\
        \hline
        7 & T3 Blue & 0,1,2,3,4 & $-1,-2,-3$ & $-1,-2,-3$ & 8 & 5 \\
        \hline
        8 & T2 Blue & 0,1,2,3,4 & $-1,-2,-3,-4$ & $-1,-2,-3$ & 9 & 5 \\
        \hline
        9 & T1 Blue & 0,1,2,3,4,5 & $-1,-2,-3,-4$ & $-1,-2,-3$ & 10 & 6 \\
        \hline
    \end{tabular}
    \caption{The point pushes done to $\beta$ to achieve the lower bound of Lemma~\ref{prop:nonsepcurve}. Columns 3, 4, and 5 list the levels of $\gamma$ that the lifts of Tails 1, 2, and 3 in $\hat\beta$ intersect, respectively.}
    \label{table:table}
\end{table}

\pit{Calculating $C_\gamma(\beta)$} By definition, $C_\gamma(\beta)$ is the total number of levels of $\gamma$ that $\beta$ intersects. Since the only parts of $\beta$ that are moved by the point pushes are the tails, $C_\gamma(\beta)$ is the total number of levels the tails intersect. 

Next, each point push in the described sequence is along a simple arc in $S$ that is in minimal position with $\gamma$ (rel endpoints). Therefore, any lift of such an arc is also in minimal position with the lifts of $\gamma.$ In particular, any lift of such an arc that intersects $\hat\gamma$ is in minimal position with the lifts of $\gamma$, and it follows from Lemma~\ref{lemma:hitscompact} that the levels of $\gamma$ it hits are all consecutive (with no repeats). Therefore, each time a tail is pushed along an arc, its lift in $\hat\beta$ intersects precisely one more level. The orientations of $\beta$ and $\gamma$ tell us that Tail 1 intersects an additional positive level when pushed while Tails 2 and 3 intersect the next negative level.

Moreover, it follows from induction that pushing Tails 1 and 2 adds one each to $C_\gamma(\beta)$. On the other hand, pushing Tail 3 does nothing since it repeats the levels Tail 2 intersects. See Table~\ref{table:table} for the base cases and the first several inductive steps.

\pit{Calculating $C_\beta(\gamma)$} We first notice that all arcs that are pushed along have their terminal sides on the right side of $\gamma.$ This implies that only lifts in \emph{positive} levels of $\beta$ will ever intersect $\hat\gamma.$ 

Next, we notice that $C_\beta(\gamma)$ will always be one more than the highest absolute value elevation that any of the tails intersects. Let $n$ be the highest absolute value elevation that any tail intersects; it comes from Tail 1 by induction. This came from a total of $n$ point pushes, creating a total point push along $n$ concatenated arcs. The lift of concatenated arc with its head in $\hat\gamma$ has its tail in level $n$ of $\gamma.$ The point push then took a level $n$ elevation of $\beta$ and pushed it to intersect $\hat\gamma.$ The $+1$ comes from the fact that $\hat\gamma$ always intersects level 0 of $\beta$.

To summarize, $C_\beta(\gamma)$ is increased only when Tail 1 is pushed, and the increase is by 1.

\medskip\noindent Overall, pushing Tail 1 increments both $C_\gamma(\beta)$ and $C_\beta(\gamma),$ pushing Tail 2 increments only $C_\gamma(\beta),$ while pushing Tail 3 does nothing (except make room for the other tails). Since we push all three in a cyclic order, we obtain that all crossing number pairs $(C_\gamma(\beta),C_\beta(\gamma))=(2n-2,n)$ or $(2n-1,n)$ are obtained for $n\geq 2.$
\end{proof}

\p{Crossing number bounds for separating curves} Our next main goal is to prove Proposition~\ref{prop:sepcurvessymnum}, which states that crossing numbers are symmetric for separating curves.

A separating curve behaves differently from a nonseparating curve when lifted to its cyclic cover. Let $[\alpha]$ be an isotopy class of separating curves and $\gamma\in[\alpha].$ For any curve $\beta$ that crosses $\gamma,$ $\beta$ must always come back to $\gamma$ on the same side it left from. For example, if we orient $\gamma$ and a curve $\beta$ crosses from the left to the right of $\gamma,$ in its next intersection with $\gamma,$ $\beta$ must cross from the right to the left. This leads to the observation that, if we orient $\gamma$, then, in the cyclic cover corresponding to $[\alpha],$ consecutive levels of $\gamma$ swap orientations.

The following lemmas use this observation to show that compact elevations are ``symmetric'' (in a loose sense of the word) in the cyclic cover.

\begin{lemma} \label{1imples0}
    If isotopic separating curves $\beta$ and $\gamma$ intersect, then their compact elevations in the cyclic cover must also intersect.
\end{lemma}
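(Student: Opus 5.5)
We argue by contradiction. Suppose $\beta$ and $\gamma$ intersect but $\hat\beta\cap\hat\gamma=\emptyset$ in $\tilde S_\gamma$. The basic picture we exploit is this: $\hat\gamma$ separates the annulus $\tilde S_\gamma$ into a left and a right side. Since $\hat\beta$ is compact, connected and disjoint from $\hat\gamma$, it lies entirely on one side, say the right side. We will derive a contradiction from the fact that $\hat\beta$ is also isotopic to $\hat\gamma$, so it must ``wrap around'' the annulus.

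\textbf{Step 1: setting up the side.} Because $\beta$ and $\gamma$ intersect, $\hat\beta$ meets some noncompact elevation of $\gamma$. Otherwise $\hat\beta$ would lie in the closure of a single component of $\tilde S_\gamma\setminus p^{-1}(\gamma)$, namely the one adjacent to $\hat\gamma$ on the right. That component projects homeomorphically onto a complementary component of $\gamma$, up to covering over it, and this would make $\beta$ disjoint from $\gamma$ after an isotopy. It is cleaner, however, to work with intersection points directly. Choose $x\in\hat\beta\cap\lambda$, where $\lambda$ is a level $k\neq 0$ elevation of $\gamma$. Take an arc $\beta_1\subset\hat\beta$ running from $x$ along $\hat\beta$.

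\textbf{Step 2: exploiting the separating property.} Since $\gamma$ is separating, consecutive levels of $\gamma$ have alternating orientations. Equivalently, each noncompact elevation $\lambda$ of $\gamma$ bounds a half-plane-like region $R_\lambda$ on the side away from $\hat\gamma$. Each time $\hat\beta$ crosses $\lambda$, it enters or leaves $R_\lambda$, and $R_\lambda$ is disjoint from $\hat\gamma$. Now use Lemma~\ref{lemma: arcs}(1) in the form of the observation that points of $\beta\cap\gamma$ lift to intersections of corresponding elevations. Because $x$ lies on both $\hat\beta$ and the elevation $\lambda$ of $\gamma$, the translate $x'$ of $x$ lying on $\hat\gamma$ must be an intersection of $\hat\gamma$ with the elevation of $\beta$ corresponding to $\hat\gamma$. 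The key claim is that this corresponding elevation is $\hat\beta$ itself. By Lemma~\ref{lem:homotopiclift}, the compact elevation $\hat\beta$ corresponds to the compact elevation $\hat\gamma$ under the lifted isotopy, and correspondence is preserved by translates, so pairs (elevation of $\beta$, elevation of $\gamma$) through a lifted intersection point always consist of corresponding elevations. Now choose $x$ to be any point of $\beta\cap\gamma$ and let $x'$ be its lift on $\hat\gamma$. The elevation of $\beta$ through $x'$ corresponds to $\hat\gamma$. This holds provided the elevation of $\beta$ through $x$ (through its lift on $\hat\beta$) corresponds to the elevation of $\gamma$ through that point, which we must verify.

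\textbf{Step 3: the main obstacle.} The hard step is proving that at some intersection point the two elevations through it correspond. This is where separation enters, since for nonseparating curves it fails (cf.\ Figure~\ref{fig:cross3}). My plan is a level-parity and orientation argument. Orient $\beta$ and $\gamma$ compatibly. For separating $\gamma$, the sign of a crossing of $\beta$ with an elevation of $\gamma$ at level $k$, measured against the orientation of $\hat\gamma$, alternates with the parity of $k$. Taking the lift of a point of $\beta\cap\gamma$ and tracking the algebraic intersection of $\hat\beta$ with the region to the right of $\hat\gamma$ then forces a crossing of $\hat\beta$ with level $0$. Concretely, $\hat\beta$ has zero algebraic intersection with each noncompact elevation, so as $\hat\beta$ enters each $R_\lambda$ it must exit it. On the other hand, the lift of $\hat\gamma$ through $x'$ and its corresponding $\beta$-elevation would have to separate these regions in an incompatible way unless the lift hits $\hat\gamma$. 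If this approach stalls, I would fall back on the universal cover. There I would use the endpoints at infinity of corresponding elevations to argue that the corresponding elevation of $\beta$ to $\hat\gamma$ shares $\hat\gamma$'s endpoints. I would then show that any elevation of $\beta$ crossing $\hat\gamma$ with different endpoints would, by separation, have to link the pair of endpoints of $\hat\gamma$. This would force the lift of $\hat\beta$ to cross the lift of $\hat\gamma$.
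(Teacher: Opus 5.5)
There is a genuine gap. Steps 1--2 make no real progress, and Step 3, where you place the entire difficulty, is only a plan.

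The first claim of Step 2 is backwards. The point $x$ lies on $\hat\beta$ and on a noncompact elevation $\lambda$ of $\gamma$, and these are \emph{not} corresponding elevations. So by Lemma~\ref{lemma: arcs}(1), the translate $x'\in\hat\gamma$ lies on a noncompact elevation of $\beta$, not on $\hat\beta$. You then ask for a point of $\beta\cap\gamma$ whose lift lies on a corresponding pair of elevations. But translating such a point onto $\hat\gamma$ gives a point of $\hat\beta\cap\hat\gamma$, so this is a restatement of the lemma, not a reduction.

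The parity/algebraic-intersection idea in Step 3 cannot close the gap as sketched, because it never uses that $\beta$ is simple, and simplicity is indispensable. Take a parallel copy of $\gamma$ on its right and push a finger from it, through the right complementary component of $\gamma$ along an essential arc, until it crosses $\gamma$. The result is a (necessarily non-simple) curve homotopic to $\gamma$ that meets $\gamma$, yet its compact elevation stays to the right of $\hat\gamma$.

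Your universal-cover fallback also fails. Every lift of $\beta$ has the same endpoints at infinity as some lift of $\gamma$, and lifts of the simple curve $\gamma$ are pairwise unlinked. So no elevation of $\beta$ ever links the endpoints of $\hat\gamma$.

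The missing idea is a barrier argument, which is what the paper uses.

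\textbf{Propagate disjointness.} Suppose $\hat\beta$ is disjoint from and to the right of $\hat\gamma$. Then the same holds for the lifts $\tilde\beta_0,\tilde\gamma_0$ of $\hat\beta,\hat\gamma$ to $\tilde S$. Applying deck transformations, every lift of $\beta$ is disjoint from, and to the right of, its corresponding lift of $\gamma$.

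\textbf{Reach level $1$.} Any point of $\beta\cap\gamma$ lifts into $\hat\beta$, hence onto a positive-level elevation of $\gamma$. Since $\hat\beta$ is an essential loop, it must then meet some level $1$ elevation $\lambda$.

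\textbf{Use separation.} Because $\gamma$ separates, the component of $\tilde S_\gamma\setminus p^{-1}(\gamma)$ adjacent to $\hat\gamma$ on its right covers the right side of $\gamma$. So this component also lies to the right of $\lambda$; that is, the right side of $\lambda$ faces $\hat\gamma$.

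\textbf{Barrier.} Choose a lift $\tilde\lambda$ of $\lambda$ meeting $\tilde\beta_0$. The lift of $\beta$ corresponding to $\tilde\lambda$ has the same endpoints at infinity as $\tilde\lambda$ and lies on the side of $\tilde\lambda$ containing $\tilde\gamma_0$. Hence it separates $\tilde\lambda$ from $\tilde\beta_0$, whose endpoints are those of $\tilde\gamma_0$. So $\tilde\beta_0$ cannot meet $\tilde\lambda$ without crossing a different lift of $\beta$, contradicting simplicity of $\beta$.

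This is exactly where separation enters. For nonseparating $\gamma$, a level $1$ elevation may have its right side facing away from $\hat\gamma$; the barrier then lies on the wrong side, and the conclusion can fail.
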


\begin{proof}
     We proceed by contradiction. Orient $\beta$ and $\gamma$ compatibly and lift these orientations to the cyclic cover. Suppose without loss of generality that $\hat{\beta}$ is on the right side of $\hat{\gamma}$. The other elevations of $\beta$ must then be to the right of the corresponding parallel elevation of $\gamma$. Since $\beta$ and $\gamma$ intersect, $\hat{\beta}$ must intersect a noncompact elevation of $\gamma$ on the right of $\hat{\gamma}$. But this is impossible since $\hat{\beta}$ would have to cross another elevation of $\beta$, contradicting $\beta$ being simple. 
\end{proof}

\begin{lemma} \label{lemma:symint}
 If $\hat{\beta}$ intersects level $\pm n\neq 0$ of $\gamma$, then it also intersects level $\pm(-n+1)$ of $\gamma$.
\end{lemma}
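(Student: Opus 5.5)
We interpret the statement as follows: if $\hat\beta$ meets level $n$ with $n>0$, then it meets level $-n+1$; the case of negative $n$ is symmetric. The plan mimics Case 1 of Lemma~\ref{prop:nonsepcurve}. We pick a point $y\in\hat\beta$ lying on a level $n$ elevation of $\gamma$. By Lemma~\ref{1imples0}, $\hat\beta$ and $\hat\gamma$ intersect, so we may also pick $x\in\hat\beta\cap\hat\gamma$. Let $\beta_1\subset\hat\beta$ be the subarc from $x$ to $y$. By Lemma~\ref{lemma:archomminpos} and Lemma~\ref{lemma:hitscompact}, $\beta_1$ has an endpoint on level $0$ and is homotopic rel endpoints to an arc crossing levels $0,1,\dots,n$ monotonically. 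It therefore essentially intersects exactly $n+1$ elevations.

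The key step is to take a translate of $\beta_1$ that starts on a point of $\hat\gamma$ lying over $p(y)$, and run it ``backwards''. Concretely, we choose $y'\in\hat\gamma\cap p^{-1}(p(y))$; this point exists because $y$ lies on $\gamma$ and $\hat\gamma$ surjects onto $\gamma$. Let $\beta_2$ be the lift of $p(\beta_1)$ starting at $y'$, and let $x'$ be its other endpoint. By Lemma~\ref{lemma: arcs}(2), $\beta_2$ also essentially intersects $n+1$ elevations of $\gamma$. Since it starts on level $0$, Lemma~\ref{lemma:hitscompact} forces it to be monotone, so $x'$ lies on level $\pm n$ of $\gamma$. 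Lemma~\ref{lemma: arcs}(1) applied to the point $y$ then says that, because $y$ lies on corresponding elevations $\hat\beta$ and the level-$n$ elevation of $\beta$... This is not quite what we want, so we reverse roles. The point $y$ lies on $\hat\beta$ (level $0$ of $\beta$) and on level $n$ of $\gamma$. Hence $y'$ lies on level $0$ of $\gamma$ and on some elevation $E$ of $\beta$, and $E$ contains $\beta_2$.

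The separating hypothesis enters here. Consecutive levels of $\gamma$ alternate orientation, and the deck group acts on the tree $T_\gamma$ by automorphisms that send the vertex at level $n$ to the vertex at level $0$. Such an automorphism reflects the path $0,1,\dots,n$ in the tree to a path $n,\dots,1,0$ viewed from the new base point. Because of the orientation alternation, the side on which the rest of the tree lies is swapped exactly when $n$ is odd versus even, and this is what should produce the shift $n\mapsto -n+1$ rather than $n\mapsto -n$. More precisely, $x'$ lies on level $0$ of $\beta$ translated, i.e.\ on $\hat\beta$'s translate $E$, and $E$ is the elevation of $\beta$ corresponding to level $\pm n$ of $\gamma$. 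By compatibility of levels under correspondence, the arc $\beta_2$ traces the levels of $\gamma$ met by the translate. I would then use the translate of the whole of $\hat\beta$, not just $\beta_1$: translating $\hat\beta$ by the deck element $\tau$ (in the universal cover, projecting back) that maps $y\mapsto y'$ sends the level-$n$ elevation through $y$ to $\hat\gamma$. It also sends $\hat\gamma$ (level $0$) to an elevation adjacent in $T_\gamma$ to the image of the level-$(n-1)$ elevation. I would then read off levels, and track which side of $\hat\gamma$ the image of the level-$1$ through level-$(n-1)$ elevations land on, using the alternating orientations.

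The main obstacle is this level bookkeeping under the deck transformation: verifying that the tree automorphism induced by $\tau$ maps level $n$ to level $0$ and level $0$ to level $-n+1$ rather than $-n$. My first approach is to do the computation in the universal cover with explicit orientations. Each elevation separates $\tilde S$, and the orientation alternates by level. So a deck transformation preserving the orientation of $\tilde S$ and mapping a level-$n$ elevation to level $0$ must preserve the induced orientation of the elevation, which pins down the side; the parity flip is what should produce the $+1$. Having obtained that $\hat\beta$ and its relevant translate both meet $\hat\gamma$ and level $n$ of $\gamma$, combining with $\hat\beta$ being embedded and disjoint from other elevations of $\beta$ should force $\hat\beta$ itself to reach level $-n+1$.
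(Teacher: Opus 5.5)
\textbf{There is a genuine gap: you translate by the wrong deck element.} Your construction is the one from Case~1 of Lemma~\ref{prop:nonsepcurve}. Translating $\beta_1$ so that its level-$n$ endpoint $y$ lands on $\hat\gamma$ proves a different statement. It shows that $\hat\gamma$ meets level $\pm n$ of $\beta$; the paper uses exactly this fact in Proposition~\ref{prop:sepcurvessymnum}. It says nothing about $\hat\beta$ reaching the opposite side of $\hat\gamma$.

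The step you flag as the ``main obstacle'' is actually false. Deck transformations of $\tilde S$ permute elevations and complementary regions, so they act on the tree of elevations by isometries. The level of an elevation in $\tilde S$ equals its signed tree distance from the level-$0$ elevation $E_0$ (the projection $\hat P$ preserves this distance). So if $\tau$ sends the level-$n$ elevation $E_n$ to $E_0$, then $\tau(E_0)$ is at distance exactly $n$ from $E_0$. That puts it at level $\pm n$, and no parity or orientation bookkeeping can turn this into $-n+1$. Your closing assertion that embeddedness ``should force $\hat\beta$ to reach level $-n+1$'' therefore has nothing to rest on.

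The missing idea is to anchor the translation one step in. For $n=1$ the claim is just Lemma~\ref{1imples0}, so assume $n\ge 2$.
\begin{enumerate}
\item Let $x$ be a point where $\beta_1$ crosses the level-$1$ elevation $E_1$ that separates $E_n$ from $\hat\gamma$. Translate so that $x$ lands on $\hat\gamma$, i.e.\ use $\sigma$ with $\sigma(E_1)=E_0$.
\item Since $\gamma$ separates, the region between $E_0$ and $E_1$ lies to the right of both. The orientation-preserving $\sigma$ therefore sends $E_0$ to a level $+1$ elevation, and $E_2,\dots,E_n$ to levels $-1,\dots,-(n-1)$.
\item The translated arc $\beta_2$ thus lies in a level-$1$ elevation $\bar\beta$ of $\beta$. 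This follows from Lemma~\ref{lemma: arcs}(1), applied at the endpoint of $\beta_1$ on $\hat\beta\cap\hat\gamma$. The other endpoint of $\beta_2$ lies on a level $-(n-1)$ elevation $F$ of $\gamma$.
\item Now the separation argument you gesture at works. $\bar\beta$ is disjoint from $\hat\beta$ and has both ends at the right end of $\tilde S_\gamma$, so it lies to the right of $\hat\beta$. $F$ has both ends at the left end. Hence $F$ has points on both sides of $\hat\beta$ and must cross it.
\end{enumerate}
This is the paper's proof, which reads off the endpoint level $-n+1$ via Lemma~\ref{lemma: essential2} and the orientations.
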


\begin{proof}
     Suppose without loss of generality that $\hat{\beta}$ intersects level $n$ of $\gamma,$ with $n>0.$ Let $\beta_1$ be a minimal subarc of $\hat{\beta}$ with endpoints on level 0 and level $n.$ Let $x$ be an intersection point of $\beta_1$ with the elevation of $\gamma$ at level 1 separating the given level $n$ elevation from the compact elevation.  See Figure \ref{fig:sep}.

    Let $y\in \hat{\gamma}$ be the unique lift of $p(x)$ in the compact component of $p^{-1}(\gamma)$. Let $\beta_2$ be the subarc of the unique lift of $\beta$ that passes through $y$ such that $p(\beta_1)=p(\beta_2).$ 

    Let $b_1$ be the subarc of $\beta_1$ that has its endpoints in level 0 and level 1 of $p^{-1}(\gamma).$ Since $p(x)=p(y),$ there exists a subarc of $\beta_2,$ which we call $b_2,$ such that $p(b_1)=p(b_2).$ We therefore know that $b_2$ also hits exactly 2 levels. To respect the orientations we assigned to the curves, these levels must be level 0 and level 1.

    Since $\beta_1$ essentially intersects $n+1$ levels (Lemma \ref{lemma:hitscompact}), its translate $\beta_2$ does as well (Lemma \ref{lemma: arcs}). Since $\beta_2$ has an endpoint on level 1, and it essentially intersects level 0 by construction, it has its other endpoint on level $-n+1$ (Lemma \ref{lemma: essential2}).

    Moreover, since $\beta_1$ is a part of the level 0 elevation and has an endpoint on level 0, Lemma~\ref{lemma: arcs} implies that $\beta_2$ must be a subset of a level 1 elevation of $\beta$ since the corresponding endpoint is in level 1 of $\gamma.$ Let $\overline{\beta}$ be the elevation of $\beta$ that contains $\beta_2.$ Since $\hat{\beta}$ is separating, we have that $\overline{\beta}$ must be to one side of $\hat{\beta}.$ In particular, it must be to the right (to match the level 1 elevations of $\gamma$ being to the right of $\hat{\gamma}$). Therefore, $\hat{\beta}$ must intersect level $-n+1$ so that $\overline{\beta}$ can stay to the right of $\hat{\beta}.$
\end{proof}

\begin{figure}[h]
\begin{center}
  \includegraphics[width=0.35\textwidth]{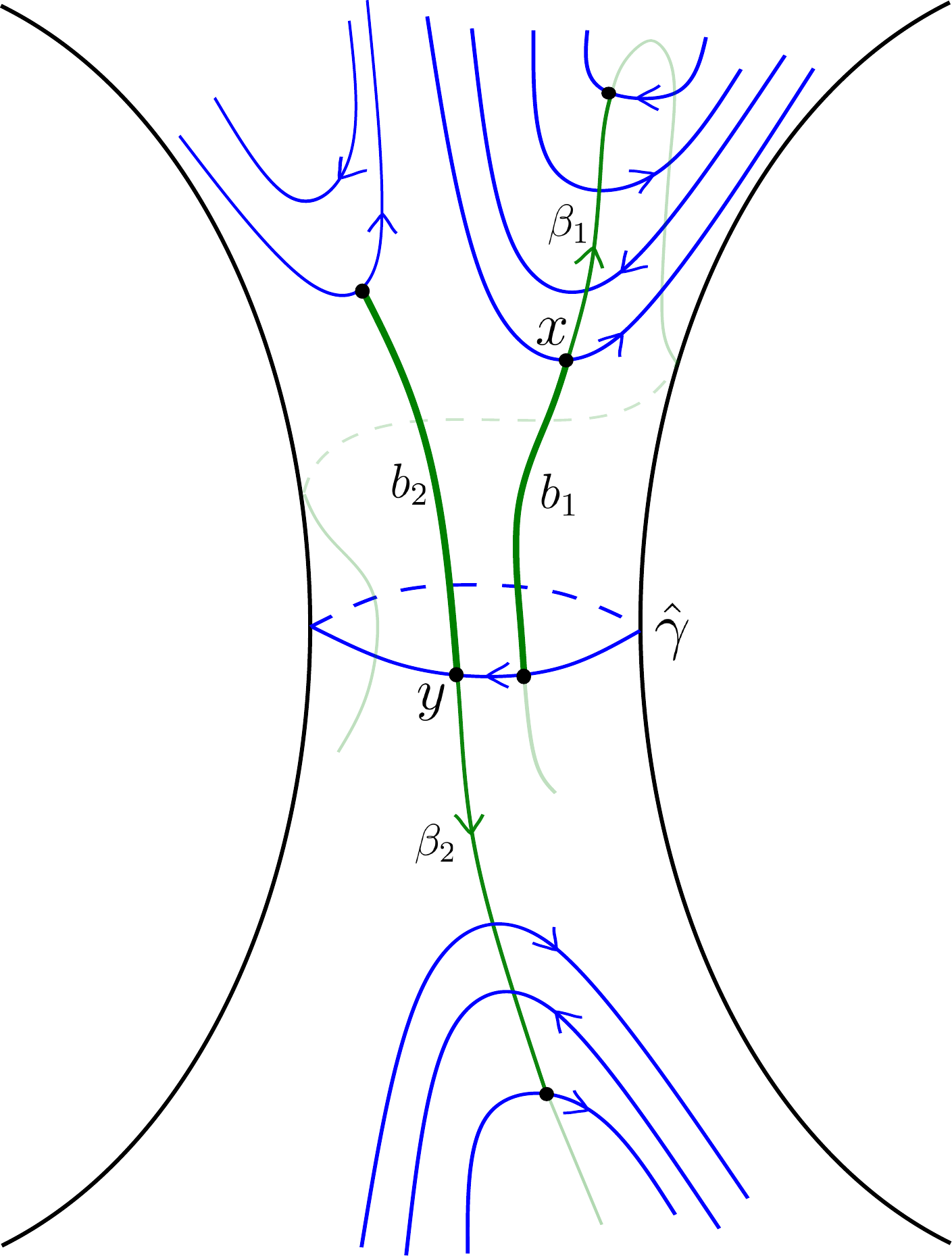}
\caption{The arcs from the proof of Lemma \ref{lemma:symint}}
\label{fig:sep}
\end{center}
\end{figure}

   \begin{proposition} \label{prop:sepcurvessymnum}
       Let $S=S_g$ with $g\geq 2.$ For isotopic separating curves $\beta$,$\gamma$,
       $$C_{\gamma}(\beta) = C_{\beta}(\gamma)$$
       
   \end{proposition}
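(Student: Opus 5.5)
The plan is to show that the set of levels of $\gamma$ met by $\hat\beta$ is always an interval of a rigid form, and that the set of levels of $\beta$ met by $\hat\gamma$ has the same form with the same endpoints up to sign. Symmetry of the counts then follows.

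If $\beta=\gamma$ or $\beta\cap\gamma=\emptyset$, both crossing numbers vanish or are equal trivially. In the disjoint case the compact elevations are disjoint, and $\hat\beta$ lies in a single complementary region of $p^{-1}(\gamma)$ between $\hat\gamma$ and its neighbours, so the two counts agree by inspection. So assume $\beta\cap\gamma\neq\emptyset$. By Lemma~\ref{1imples0}, $\hat\beta\cap\hat\gamma\neq\emptyset$. Thus $\hat\beta$ meets level $0$ of $\gamma$, and every subarc of $\hat\beta$ starting on $\hat\gamma$ is, after homotopy, monotone in levels (Lemma~\ref{lemma:hitscompact}, the level-$0$ clause). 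Hence the levels met by $\hat\beta$ form an interval $[-a,b]$ with $a,b\ge 0$, and $C_\gamma(\beta)=a+b+1$.

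Lemma~\ref{lemma:symint} constrains this interval. If $b\ge1$, then $\hat\beta$ meets level $-b+1$, so $a\ge b-1$; symmetrically, $a\ge1$ forces $b\ge a-1$. Hence $|a-b|\le 1$, and $C_\gamma(\beta)$ is determined by $m=\max\{a,b\}$ together with whether $a=b$. Concretely, the level set is $[-m,m]$, $[-(m-1),m]$, or $[-m,m-1]$. The same analysis with the roles exchanged gives an interval $[-a',b']$ for $\hat\gamma$ relative to $\beta$ with $|a'-b'|\le1$.

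The main step is to show $\max\{a,b\}=\max\{a',b'\}$ and that the "defects" match. I would reuse the translation argument from Case 1 of Lemma~\ref{prop:nonsepcurve}.
\begin{itemize}
\item Take a subarc $\beta_1\subset\hat\beta$ from $x\in\hat\beta\cap\hat\gamma$ to a point $y$ on level $m$ of $\gamma$.
\item Translate $p(\beta_1)$ so that the image of $y$ lands on $\hat\gamma$; call the translate $\beta_2$.
\item By Lemma~\ref{lemma: arcs}, the point $x'$ corresponding to $x$ is an intersection of corresponding elevations of $\beta$ and $\gamma$ at a common level $\pm m$.
\item By Lemma~\ref{lemma:hitscompact} and essential intersection counts, $\beta_2$ lies on a level $\pm m$ elevation of $\beta$ crossing $\hat\gamma$.
\end{itemize}
This gives $\max\{a',b'\}\ge m$, and by symmetry equality holds.

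What remains is the sign bookkeeping, which I expect to be the main obstacle. We must show that the side carrying the larger extent ($b=a+1$ versus $a=b+1$) is reflected correctly, so that the three interval types correspond and the total counts $a+b+1$ and $a'+b'+1$ agree. The plan is to track orientations, using the observation that consecutive levels of a separating curve have alternating orientations, as in the proof of Lemma~\ref{lemma:symint}. From this I expect that a level $\pm m$ elevation of $\beta$ meeting $\hat\gamma$ lies on the side determined by whether $\hat\beta$ reaches $\gamma$-level $m$ or $-m$. This would show that the case $a=b$ corresponds exactly to $a'=b'$. If the orientation argument becomes delicate, I would instead prove directly that $a=b$ iff $a'=b'$. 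To do so, I would apply the translation step to both extreme levels $\pm m$ at once and use Lemma~\ref{lemma:symint} to exclude the mismatched configuration.
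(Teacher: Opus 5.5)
Your outline is sound but organized differently from the paper's proof. The paper does not use an interval structure. For \emph{every} level $n$ of $\gamma$ met by $\hat\beta$, it runs the translation argument of Lemma~\ref{prop:nonsepcurve}. It then asserts, from the alternating orientations of consecutive levels, that the resulting elevation of $\beta$ crossing $\hat\gamma$ has level $n$ if $n$ is odd and $-n$ if $n$ is even. Since $n\mapsto(-1)^{n+1}n$ is injective, this gives $C_\gamma(\beta)\le C_\beta(\gamma)$, and exchanging roles gives equality.

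You instead use Lemma~\ref{1imples0}, the interval property and Lemma~\ref{lemma:symint} to reduce both counts to the data ($m$, whether $a=b$). Translation is then needed only at the extreme levels $\pm m$, and only in the relative form ``the elevations of $\beta$ produced from $\gamma$-levels $m$ and $-m$ lie on opposite sides of $\hat\gamma$.'' Your route costs more setup but yields the formula $C\in\{2m,2m+1\}$ that Proposition~\ref{prop:distboundssep} later uses. The paper's route is shorter and needs Lemma~\ref{lemma:symint} only in passing. One small correction: justify the interval property by separation, not by homotoping, since crossing numbers count actual intersections. Each elevation of $\gamma$ separates $\tilde S_\gamma$, so a connected set through $\hat\gamma$ meeting level $\pm k$ also meets level $\pm(k-1)$.

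The sign step is the crux in both routes, so commit to the orientation argument and drop your fallback. Lemma~\ref{lemma:symint} cannot rule out the mismatched configuration. Suppose $\hat\beta$ meets $\gamma$-levels $-m,\dots,m$ while $\hat\gamma$ meets $\beta$-levels $-(m-1),\dots,m$. This satisfies Lemma~\ref{lemma:symint} in both directions and has matching maxima, yet gives $2m+1\neq 2m$.

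The side argument is short:
\begin{enumerate}
\item The complementary regions of $P^{-1}(\gamma)$ in $\tilde S$ cover the two components of $S\setminus\gamma$ alternately along the tree. Hence the region adjacent to a level-$m$ elevation on the side facing level $0$, and the corresponding region for a level-$(-m)$ elevation, cover \emph{different} components (they are an odd number of steps apart).
\item The deck transformation of $\tilde S$ taking the lift containing (a lift of) $y$ onto the lift of $\hat\gamma$ preserves which component each adjacent region covers.
\item So it carries the lift of $\hat\gamma$ to the side of the lift of $\hat\gamma$ whose adjacent region covers that same component.
\end{enumerate}
Hence the elevations of $\gamma$, and therefore the corresponding elevations of $\beta$, through the two points $x'$ lie on opposite sides of $\hat\gamma$. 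This gives $a=b\iff a'=b'$, as you need.
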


   \begin{proof}
    Suppose $\hat{\beta}$ intersects level $n$ of $\gamma.$ A similar argument to Lemma~\ref{prop:nonsepcurve} leads us to conclude that $\hat{\gamma}$ intersects level $n$ or level $-n$ of $\beta.$ Due to the orientations of $\gamma$ and $\beta$, if $n$ is odd, $\hat{\gamma}$ intersects level $n$. If $n$ is even, $\hat{\gamma}$ intersects level $-n.$

    By Lemma~\ref{lemma:symint}, the outermost levels of $\gamma$ that $\hat\beta$ intersects differ by at most 1. Moreover, by the same argument used in the previous paragraph, each level of $\gamma$ intersected by $\hat\beta$ leads to a level of $\beta$ that $\hat\gamma$ intersects (and vice versa), completing the equality.
    \end{proof}

\section{Distance bounds}\label{sec:six} Now we are ready to prove the distance results for the single-isotopy-class fine curve graph, starting with the separating case. We first will show a lemma that applies to both separating and nonseparating curves and will eventually lead to the lower bound for both.

\begin{lemma} \label{lowerbound}
    Let $\beta = \beta_0, \beta_1, \ldots, \beta_N$ be a sequence of homotopic essential simple closed curves such that $\beta_n$ is disjoint from $\beta_{n+1}$ for all $n$. Let $L_n$ be the list of levels of $\beta$ that $\hat\beta_n$ intersects, and let $M_n$ be the maximum of the absolute values of the elements of $L_n$ or 0 when $L_n$ is empty. Then $M_n \leq n-1$ for $n > 0$.
\end{lemma}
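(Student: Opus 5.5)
We argue by induction on $n$, working in the cyclic cover $\tilde S_\beta$ with the elevations of $\beta$ and their levels fixed once and for all. The key observation is that disjointness of $\beta_n$ and $\beta_{n+1}$ in $S$ lifts to disjointness of \emph{all} elevations of $\beta_n$ from all elevations of $\beta_{n+1}$ in $\tilde S_\beta$. In particular, $\hat\beta_{n+1}$ is disjoint from every elevation of $\beta_n$. We then control how far $\hat\beta_{n+1}$ can reach by comparing the elevations of $\beta_n$ with those of $\beta$ via corresponding elevations (Lemma~\ref{lem:homotopiclift}).

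\textbf{Base case $n=1$.} Since $\beta_1$ is disjoint from $\beta_0=\beta$, $\hat\beta_1$ meets no elevation of $\beta$. Hence $L_1$ is empty and $M_1=0$.

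\textbf{Inductive step.} Suppose $M_n\le n-1$, and suppose for contradiction that $\hat\beta_{n+1}$ meets an elevation $E$ of $\beta$ at level $m$ with $|m|\ge n+1$; say $m>0$. Since $\hat\beta_{n+1}$ is homotopic to $\hat\beta$ (which lies at level $0$), we pick a point $x\in\hat\beta_{n+1}\cap E$ and an arc $\zeta\subset\hat\beta_{n+1}$ from $x$ to a point near $\hat\beta$. Alternatively, we note that $\hat\beta_{n+1}$ must essentially connect level $0$ to level $m$. By Lemma~\ref{lemma:hitscompact} applied to a minimal-position representative, any such arc essentially crosses every level $1,\dots,m$, and in particular crosses a level-$n$ elevation $E'$ of $\beta$ separating $\hat\beta$ from $E$.

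Now consider the elevation $E'_n$ of $\beta_n$ corresponding to $E'$. By Lemma~\ref{lem:homotopiclift} it has the same ends as $E'$, so it separates $\tilde S_\beta$ in the same way, with $\hat\beta$ on one side and $E$ on the other. (If $E'_n$ failed to separate these, we would pass to the universal cover, where elevations are properly embedded lines with the same endpoints at infinity as their corresponding elevations, and argue there before projecting.) Since $\hat\beta_{n+1}$ is compact, is homotopic to $\hat\beta$, and meets $E$, it must cross $E'_n$ unless $E'_n$ itself stays within bounded levels.

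Here the induction hypothesis enters, and we need a slightly stronger statement than $M_n\le n-1$. We strengthen the induction to the claim that every elevation of $\beta_n$ at level $k$ of $\beta_n$ lies within levels $k-(n-1)$ to $k+(n-1)$ of $\beta$. This follows by applying the hypothesis to translates, using Lemma~\ref{lemma: arcs}, together with the fact that deck transformations of $\tilde S$ shift levels. Under this claim, a level-$n$ elevation of $\beta_n$ still separates level $0$ from level $m\ge n+1$ of $\beta$: it reaches at most level $2n-1$ but at least level $1$. We then run the separation argument with the elevation of $\beta_n$ corresponding to level $m-?$ chosen so that $\hat\beta_{n+1}$ is forced to cross it, contradicting disjointness.

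\textbf{Main obstacle.} The hardest step is the bookkeeping that propagates the bound from the compact elevation to all elevations of $\beta_n$, i.e.\ the strengthened hypothesis. It requires tracking levels under deck transformations in $\tilde S$, and I expect to carry out that part in the universal cover, where Lemma~\ref{lemma: essential1} gives clean separation.
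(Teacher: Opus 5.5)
Your inductive step is not closed. You take as barrier the level-$n$ elevation $E'_n$ of $\beta_n$. By your own strengthened bound, such an elevation can reach up to level $2n-1$ of $\beta$, while $m$ may be as small as $n+1$. So $E'_n$ may cross $E$, and the point $x\in\hat\beta_{n+1}\cap E$ need not lie beyond it. The phrase ``level $m-?$'' marks exactly where the proof is missing. The claim that $E'_n$ ``separates $\tilde S_\beta$ in the same way'' as $E'$, with $\hat\beta$ on one side and $E$ on the other, is also unjustified. Sharing ends with $E'$ only means $E'_n$ cuts off the same ideal interval; both $E$ and $\hat\beta$ may cross it. (Relatedly, $\hat\beta_{n+1}$ is forced to cross $E'_n$ \emph{if}, not ``unless'', $E'_n$ stays below level $m$.) What you need is an elevation $F$ of $\beta_n$ that is provably disjoint from $E$, cuts off an ideal interval containing the ends of $E$, and is one that $\hat\beta_{n+1}$ must avoid.

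The missing idea, which the paper uses, is to take the level-$\pm1$ elevations of $\beta_n$ as barriers. Since $\hat\beta_{n+1}$ misses all of $p^{-1}(\beta_n)$ and is essential, it lies in a complementary component adjacent to $\hat\beta_n$. That component is bounded by $\hat\beta_n$ and the level-$1$ (or level-$(-1)$) elevations of $\beta_n$. It therefore suffices to show that a level-$1$ elevation of $\beta_n$ meets $\beta$ only in levels of absolute value at most $M_n+1$, which gives $M_{n+1}\le M_n+1\le n$.

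The paper proves this bound by contradiction:
\begin{itemize}
\item Take a subarc $\zeta_1$ of a level-$1$ elevation of $\beta_n$ ending on level $M_n+2$ of $\beta$, and join it by an arc $\zeta_2$ to the corresponding level-$1$ elevation of $\beta$.
\item Translate so that $\zeta_2'$ joins $\hat\beta$ to $\hat\beta_n$ (Lemma~\ref{lemma: arcs}). Then $\zeta_1'\subset\hat\beta_n$.
\item The translated arc still essentially crosses $M_n+2$ elevations starting from level $0$. By Lemma~\ref{lemma:hitscompact} it ends on level $\pm(M_n+1)$, contradicting the definition of $M_n$.
\end{itemize}

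The $k=1$ case of your strengthened claim gives exactly this bound (levels at most $n<m$). So your separation framework would close if you took $F$ to be the level-$1$ elevation of $\beta_n$ corresponding to the level-$1$ elevation of $\beta$ on the tree path from $\hat\beta$ to $E$. You would still have to prove that $k=1$ bound. The remark that deck transformations ``shift levels'' is not an argument, since deck transformations of $\tilde S$ move the root of the level tree. The general-$k$ version is unnecessary.
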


\begin{proof}
   We proceed by induction. Since $\beta_1$ and $\beta$ are disjoint, $M_1 = 0$ by definition. 
       
    A more interesting base case is that of $M_2.$ We aim to show that $M_2\leq 1.$ Observe that in the cyclic cover, the compact elevation of $\beta_1$ bounds an annulus with the compact elevation of $\beta$, and the other elevations of $\beta_1$ each bound a disk with two boundary points removed with a paired elevation of $\beta$. The elevations of $\beta_1$ can be either on the right or left side or the paired elevation of $\beta$. Since the compact elevation of $\beta_{2}$ is disjoint from the elevations of $\beta_1$, it cannot hit a level $\pm2$ elevation of $\beta$ as it would have to pass through a level $\pm1$ elevation of $\beta$ and thus a $\pm 1$ elevation of $\beta_1$. It follows that $M_{2}\leq 1.$

    For the inductive step, suppose we have $M_n \leq n-1$ for some $n \geq 2$. Then in the cyclic cover, the compact elevation of $\beta_{n+1}$ is between the compact elevation of $\beta_n$ and (without loss of generality) the level 1 elevation of $\beta_n$. (See Figure~\ref{fig:sepcurveboundsNEW} for a schematic.) We claim that the level 1 elevations of $\beta_n$ can only go out as far as the level $M_n +1$ elevations of $\beta$, from which it follows the compact elevation of $\beta_{n+1}$ can only go out as far as the level $M_n + 1$ elevations as well. Otherwise, it would cross an elevation of $\beta_n$. We are done if we show this claim, since then $M_{n+1} \leq M_n + 1 \leq n=(n+1)-1$. 

    Suppose for the sake of contradiction that a level 1 elevation of $\beta_n$ intersects a level $M_n+2$ elevation of $\beta$. (Note that we cannot have a level 1 elevation of $\beta_n$ intersecting a level $-(M_n+2)$ elevation of $\beta$ since $\hat\beta_n$ can only go down to level $-M_n$ of $\beta$.) Therefore, there is a subarc $\zeta_1$ of the level 1 elevation of $\beta_n$ with an endpoint on the $M_n+2$ level of $\beta$. Since $\zeta_1$ is a subset of a level 1 elevation of $\beta_n$ which itself shares endpoints in $\partial \tilde S_\gamma$ with a level 1 elevation of $\beta,$ we have that to intersect a level $M_n+2$ elevation of $\beta,$ $\zeta_1$ must intersect a level 2 elevation of $\beta.$ Since each elevation of $\beta$ is separating and to reach a higher level elevation, you must first cross lower level elevations, $\zeta_1$ must essentially intersect all levels of $\beta$ between 2 and $M_n+2,$ inclusive. Therefore $\zeta_1$ intersects at least $M_n+1$ elevations of $\beta.$

       \begin{figure}[htbp]
\begin{center}
\begin{tikzpicture}
    \node[anchor = south west, inner sep = 0] at (0,0) {\includegraphics[width=0.55\textwidth]{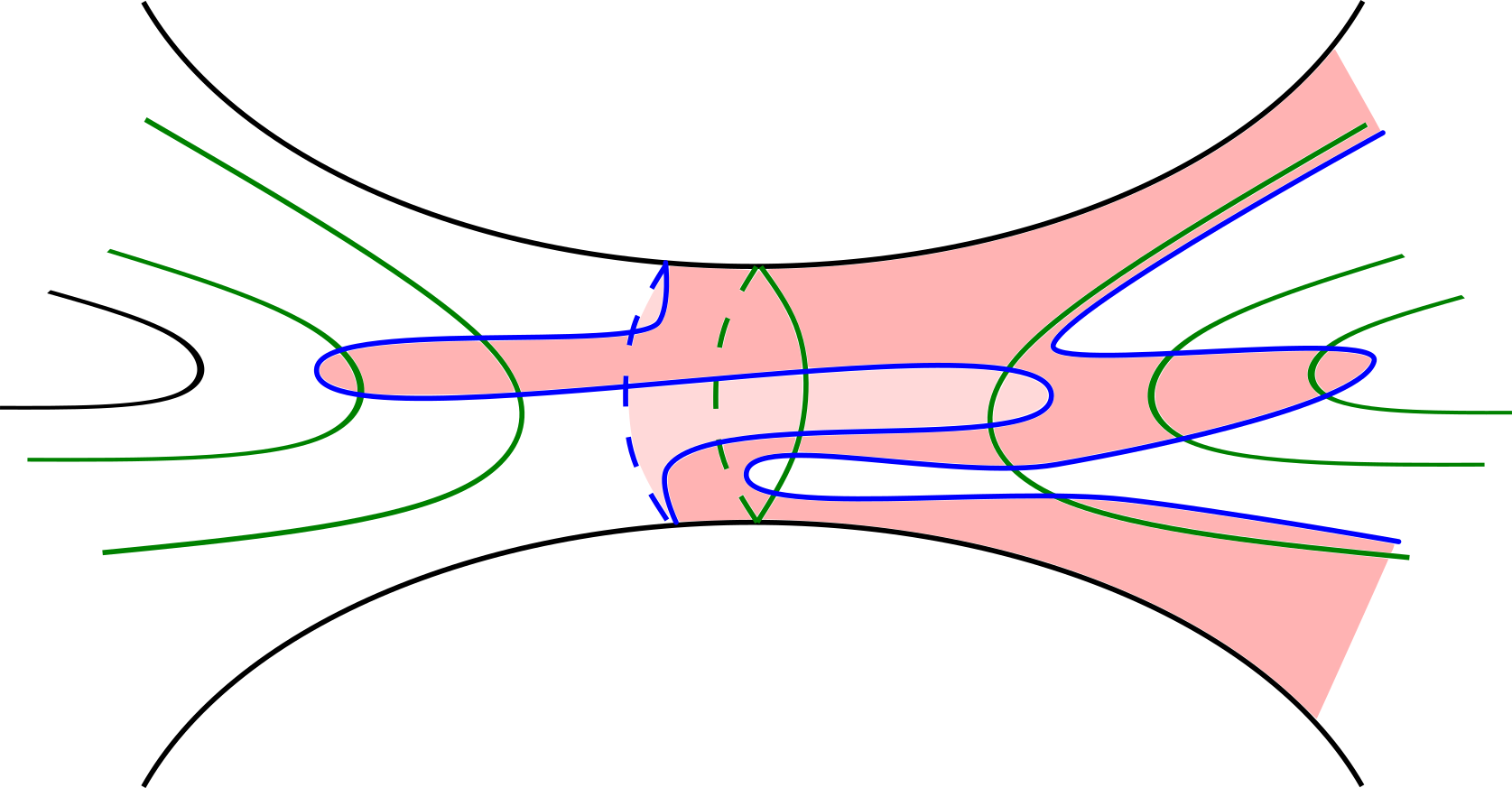}};

    \end{tikzpicture}
\caption{Pictured are lifts of $\beta$ (in green) and $\beta_n$ (in blue) of geodesic path $\beta-\beta_1-\cdots-\beta_n-\beta_{n+1}$. If $\hat\beta_{n+1}$ is to the right of $\hat\beta_n$, then it must be in the region bounded by all level 0 and level 1 elevations of $\beta_n.$ This restricts which levels of $\beta$ the lift $\hat\beta_{n+1}$ can intersect. This idea is used in the proofs of Lemma~\ref{lowerbound}, Proposition~\ref{prop:distboundssep}, and Proposition~\ref{prop:distboundssingleisotopyclass}. 
}\label{fig:sepcurveboundsNEW}
\end{center}
\end{figure}

    We have two cases: either corresponding level 1 elevations of $\beta$ and $\beta_n$ intersect or they do not. If the elevations do intersect, take $\zeta_1$ to have its other endpoint in the corresponding level 1 elevation, define this point of intersection to be $\zeta_2,$ and let $\zeta=\zeta_1\cup \zeta_2=\zeta_1.$ Otherwise, if the corresponding level 1 elevations do \emph{not} intersect, let $\zeta_2$ be an arc connecting the corresponding level 1 elevations of $\beta$ and $\beta_n$ that is disjoint from the lifts of $\beta\cup\beta_n$ in its interior. Let $\zeta=\zeta_1\cup \zeta_2.$

    We now have an arc $\zeta$ from a level 1 elevation of $\beta$ to a level $M_n+2$ elevation of $\beta$ that is almost entirely comprised of a level 1 elevation of $\beta_n.$ 

    Let $x$ be the endpoint of $\zeta$ contained in $\zeta_2.$ Let $x'$ be the translate of $x$ in $\hat\beta$ and let $\zeta'$ be the translate of $\zeta$ with $x'$ as one endpoint. We define $\zeta'_1\subseteq \zeta'$ and $\zeta'_2\subseteq \zeta$ similarly as translates of $\zeta_1$ and $\zeta_2,$ respectively. By Lemma~\ref{lemma: arcs}, $\zeta'_2$ connects corresponding elevations of $\beta$ and $\beta_n$---that is, $\zeta'_2$ connects the compact elevations $\hat\beta$ and $\hat\beta_n.$ It follows that $\zeta'_1\subset \hat\beta_n$. 
    
    Since taking translates of arcs preserves essential intersection number, $\zeta'$ essentially intersects $M_n+2$ elevations of $\beta.$ By Lemma~\ref{lemma:hitscompact}, the list of levels $\zeta'$ intersects when homotoped into minimal position is monotonic, so $\zeta'$ intersects level $\pm(M_n+1).$ It follows that the second endpoint of $\zeta'$---the one contained in $\zeta'_1$ and $\hat\beta_n$---is in a level $\pm(M_n+1)$ elevation of $\beta.$ This contradicts the definition of $M_n,$ which is the largest absolute value of any level of $\beta$ that $\beta_n$ intersects.
    
    We conclude that no level 1 elevation of $\beta_n$ can intersect a $\pm(M_n+2)$ elevation of $\beta.$ Therefore, we have that $M_{n+1}\leq M_n+1 \leq n,$ as desired.
    \end{proof}

\begin{proposition}\label{prop:distboundssep}
    Let $S$ be a closed surface of genus at least two. Let $\beta$ and $\gamma$ be separating curves isotopic to a curve $\alpha$. Let $C = C_{\beta}(\gamma) = C_{\gamma}(\beta)$. Then, 
    \[\dfiber(\beta,\gamma) = \lceil \frac{C}{2} \rceil+1.\]
\end{proposition}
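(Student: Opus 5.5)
The plan is to prove the two inequalities $\dfiber(\beta,\gamma)\ge \lceil C/2\rceil+1$ and $\dfiber(\beta,\gamma)\le \lceil C/2\rceil+1$ separately. Throughout we assume $\beta$ and $\gamma$ intersect; the disjoint case, with distance $1$, is handled separately, and we read the formula for $C\ge 1$. Note also that for intersecting separating curves Lemma~\ref{1imples0} gives $C\ge 2$.

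\emph{Lower bound.} Let $\beta=\beta_0,\beta_1,\ldots,\beta_N=\gamma$ be a geodesic in $\fiber(S)$. By Lemma~\ref{lowerbound}, $\hat\gamma=\hat\beta_N$ meets only levels of $\beta$ of absolute value at most $N-1$.

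Next we use the separating structure. By Lemma~\ref{lemma:symint} and Proposition~\ref{prop:sepcurvessymnum}, the set of levels of $\beta$ met by $\hat\gamma$ is an interval of consecutive integers containing $0$. Its two extreme levels differ in absolute value by at most one, say levels $-m+1,\ldots,m$. Hence $C=2m$, or $C=2m-1$ when the interval is symmetric up to a missing outermost level.

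In either case the largest absolute level is $\lceil C/2\rceil$. Combining, $\lceil C/2\rceil\le N-1$, which gives $N\ge \lceil C/2\rceil+1$. The point to check carefully is that the interval of levels has exactly the form dictated by Lemma~\ref{lemma:symint}, with the parity and orientation alternation of levels for separating curves. This pins down $M_N=\lceil C/2\rceil$ exactly, rather than only $M_N\ge C/2$.

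\emph{Upper bound.} We induct on $C$, constructing a curve $\beta_1$ disjoint from $\beta$ and isotopic to it with $C_{\beta_1}(\gamma)\le C-2$. Induction then gives $\dfiber(\beta,\gamma)\le 1+\dfiber(\beta_1,\gamma)\le \lceil C/2\rceil+1$. For the base cases $C\le 2$, we build a curve disjoint from both $\beta$ and $\gamma$, giving distance at most $2$.

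To build $\beta_1$, we work in the cyclic cover $\tilde S_\gamma$. We take $\hat\beta_1$ to be a curve parallel to $\hat\beta$ on one side, the side toward which $\hat\gamma$'s outermost excursion occurs. We then push $\hat\beta_1$ past the extremal bigon-like regions of $\hat\beta$ that reach the outermost levels. Equivalently, in $S$ we take a small regular neighbourhood of $\beta$ and surger its boundary component along outermost arcs of $\gamma$ so as to remove the outermost level on \emph{each} side. Because consecutive levels of a separating curve alternate orientation, a single disjoint push can shave one level from both the positive and the negative end simultaneously. This is why distance grows like $C/2$ rather than $C$. We then verify with Lemma~\ref{lemma:hitscompact} and Lemma~\ref{lemma: arcs} that the new compact elevation meets only the levels $-m+2,\ldots,m-1$ of $\gamma$.

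\emph{Main obstacle.} The hard step is this upper bound construction. We must show that the surgered curve is simple, is isotopic to $\alpha$, is disjoint from $\beta$, and really drops the crossing number by $2$ on both ends at once, using the symmetry lemmas. Our first approach is to do the construction via innermost outer arcs of $\gamma$ relative to $\beta$, lifted to $\tilde S_\gamma$. Projection of $\hat\beta_1$ is then injective because its elevations stay in the region between the level $0$ and level $\pm1$ elevations of $\beta$, as in Figure~\ref{fig:sepcurveboundsNEW}.
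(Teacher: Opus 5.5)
Your even-$C$ arguments agree with the paper's, but there is a genuine gap: in both directions you mishandle the case where $C$ is odd. By Lemma~\ref{lemma:symint}, the levels of $\beta$ met by $\hat\gamma$ form an interval containing $0$ whose two ends differ in absolute value by at most one. So either the interval is $-m+1,\dots,m$ (or its mirror), with $C=2m$ and largest absolute level $m=C/2$. Or it is the symmetric interval $-m,\dots,m$, with $C=2m+1$ and largest absolute level $m=\lfloor C/2\rfloor=\lceil C/2\rceil-1$.

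\textbf{Lower bound.} Your claim that the largest absolute level is always $\lceil C/2\rceil$ (``this pins down $M_N=\lceil C/2\rceil$'') is false for odd $C$. In that case Lemma~\ref{lowerbound} alone only gives $N\ge\lceil C/2\rceil$, one short. For example, for levels $-1,0,1$ you get $N\ge 2$, but the true distance is $3$. The missing idea, which the paper supplies, is to look one step back along the geodesic. The curve $\hat\gamma=\hat\beta_N$ lies on one side of $\hat\beta_{N-1}$, inside the region bounded by the level-$0$ and level-$\pm1$ elevations of $\beta_{N-1}$. So on one of the two sides of the annulus it cannot reach a more extreme level of $\beta$ than $\hat\beta_{N-1}$ does. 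Since $\hat\gamma$ reaches both $m$ and $-m$, this forces $M_{N-1}\ge m$. Lemma~\ref{lowerbound} at index $N-1$ then gives $m\le N-2$, i.e.\ $N\ge (C+3)/2=\lceil C/2\rceil+1$.

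\textbf{Upper bound.} Your inductive step fails in the same case. For $C=2m+1$ with $m\ge 1$, no curve $\beta_1$ disjoint from and isotopic to $\beta$ can have $C_{\beta_1}(\gamma)\le C-2$. Here is why:
\begin{itemize}
\item $\beta$ and $\beta_1$ cobound an annulus lying in one complementary component $Y$ of $\beta$.
\item Hence each elevation $\ell$ of $\beta$ is paired with a same-level elevation $\ell'$ of $\beta_1$, lying in the lift of $Y$ adjacent to $\ell$.
\item Lifts of the two complementary components alternate from level to level. So for exactly one of the extreme levels $\pm m$, that adjacent lift of $Y$ lies on the side of $\ell$ facing $\hat\beta$.
\item For that level, $\ell'$ separates $\ell$ from $\hat\beta$. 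Since $\hat\gamma$ meets both (Lemma~\ref{1imples0}), it must cross $\ell'$.
\item Combined with Lemma~\ref{lemma:symint}, this gives $C_{\beta_1}(\gamma)\ge 2m=C-1$.
\end{itemize}
This is exactly where your orientation heuristic breaks. It works for even $C$, since the extreme levels $-m+1$ and $m$ then have opposite orientations. But $\pm m$ have the same orientation, so one push cannot shave both ends.

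The fix, as in the paper, is to spend one step removing a single extreme level when $C$ is odd, landing in the even case $C-1$. Since $1+\lceil (C-1)/2\rceil+1=\lceil C/2\rceil+1$ for odd $C$, the right inductive quantity is $\lceil C/2\rceil$, which drops by one per step, not $C$ dropping by two.

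A minor point: Lemma~\ref{1imples0} only gives $C\ge 1$ for intersecting curves, not $C\ge 2$; a small finger of $\gamma$ pushed across $\beta$ has $C=1$. Your base case $C\le 2$ covers this anyway.
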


\begin{proof}
    We divide our proof into two parts corresponding to the two inequalities.

    \pit{Upper bound: $\dfiber(\beta,\gamma) \leq \lceil \frac{C}{2} \rceil+1$} Our proof of the upper bound is informed by that of Lemma 4.5 of Bowden--Hensel--Mann--Militon--Webb in the case of tori \cite{BHMMW}. However, in our case, we are able to make some stronger statements and tighten the bound by dividing it nearly in half. 

    The main idea of the proof is to create a path from $\beta$ to $\gamma$ by surgering $\beta$ sequentially. Consider the outermost levels of $\beta$ that $\hat\gamma$ in the cyclic cover. This collection of intersections must form bigons in the cyclic cover; our goal is to remove these bigons, one (or two) level(s) at a time.

    Suppose $\hat \gamma$ hits an even number of levels of $\beta.$ This means that the outermost levels of $\beta$ that $\hat\gamma$ intersects are oriented in opposite directions, so the ends of the annulus are both to the right or both to the left of the outermost intersected levels.
    We can homotope $\beta$ so the lifts of $\beta$ avoid the intersections of $\hat\gamma$ with all of the outermost levels of $\beta.$ The main differences from the case of Bowden--Hensel--Webb are that (1) there are two outermost levels, and (2) $\hat\gamma$ may intersect multiple elevations of the outermost level of $\beta.$ 

     \begin{figure}[h]
\begin{center}
\begin{tikzpicture}
    \node[anchor = south west, inner sep = 0] at (0,0) {\includegraphics[width=0.8\textwidth]{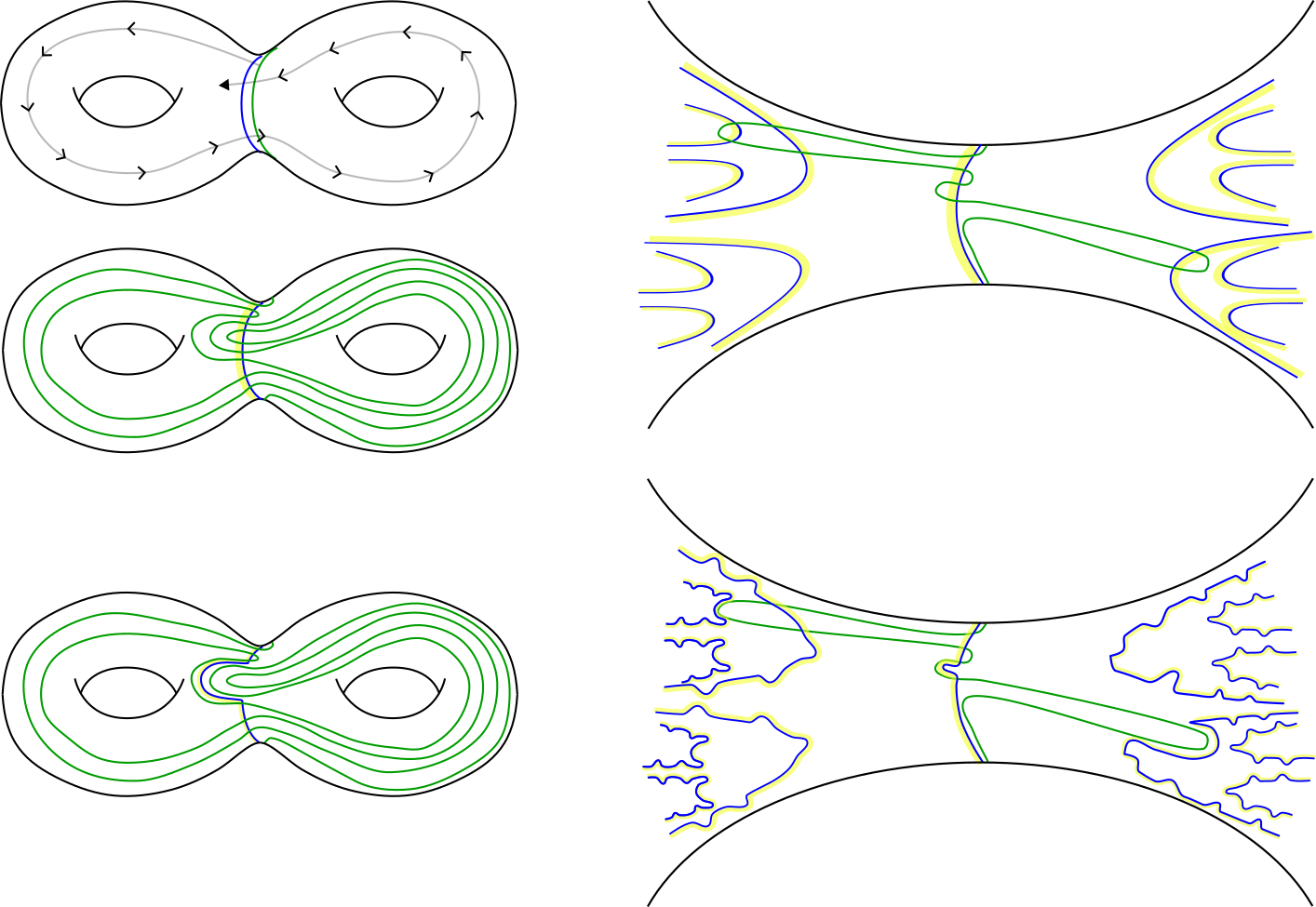}};
    \end{tikzpicture}
\caption{A schematic of the surgery procedure giving the upper bound in Proposition~\ref{prop:distboundssep}. Top left: To obtain a sample $\beta_n$ (green curve), we point push a curve parallel to the $\beta$ (blue). Middle left: $\beta$ and $\beta_n,$ with the left side of $\beta$ highlighted for orientation purposes. Top right: the lift of Middle left to $\Tilde{S}_\gamma.$ Bottom right: homotoping $\beta$ in the cyclic cover to avoid outermost intersections of $\beta_n$ and $\beta.$ Bottom left: projection of Bottom right to $S.$}\label{fig:sepcurvebounds}
\end{center}
\end{figure}

    To resolve these difficulties, we notice that the interiors of bigons formed with the outermost levels in the cyclic cover project to embedded disks in $S.$ Thus, these bigons in the cyclic cover project to either outermost bigons in $S$ or not. Moreover, we notice that outermost bigons have disjoint interiors in $S.$ Therefore, we may homotope $\beta$ to avoid these outermost bigons. This corresponds to pushing $\beta$ ``outward'' in the cyclic cover. We therefore note that the ``pushing'' operation of Bowden--Hensel--Mann--Militon--Webb still applies in our case. A schematic of this procedure can be seen in Figure~\ref{fig:sepcurvebounds}. In particular, we are constructing $\frac{C}{2}$ vertices in our path from $\beta$ to $\gamma,$ resulting in a path of length $\frac{C}{2}+1.$

    Suppose $\hat\gamma$ hits an odd number of levels of $\beta.$ Since $\beta$ and $\gamma$ are separating, Lemma~\ref{lemma:symint} tells us this only happens if these are levels $-n$ through $n.$ We then homotope $\beta$ to remove bigons with exactly one of the outermost levels, leaving us in the previous case of an even number of levels intersected. We point out that the extra vertex along the path is accounted for by the ceiling function.

    \pit{Lower bound: $\dfiber(\beta,\gamma) \geq \lceil \frac{C}{2} \rceil+1$} Suppose $d_{\fine_\alpha}(\beta,\gamma)=N,$ so we have a geodesic of length $N$ from $\beta$ to $\gamma$ given by $\beta = \beta_0, \beta_1, \ldots, \beta_N = \gamma$. Let $L_n$ be the set of levels of $\beta$ that $\beta_n$ intersects, and let $M_n$ be the maximum of the absolute values of the elements of $L_n$. By Lemma \ref{lowerbound}, we have that $N \geq M_N+1$. Additionally, since $\hat \beta_{n+1}$ is on either the right or left side of $\hat\beta_n$, we have that on one side (either positive or negative, whichever $\hat\beta_n$ is on), the maximum absolute value of the levels $\hat\beta_{n+1}$ intersects cannot exceed the maximum absolute value of the levels $\hat\beta_n$ intersects.

    Lemma~\ref{lemma:symint} tells us that either $C=2M_N$ (if $M_N$ is achieved by exactly one of the positive or negative levels) or $C=2M_N+1$ (if $M_N$ is achieved by both the positive and negative levels). In the former case, we have \[C=2M_N \leq 2(N-1),\] so we obtain \[N\geq \frac{C}{2}+1=\lceil\frac{C}{2}\rceil+1.\] In the latter case, we notice that since either the positive or negative side of $\hat\beta_{N}$ is bounded by $\hat\beta_{N-1},$ we have that $M_{N-1}\geq M_N.$ (See Figure~\ref{fig:sepcurveboundsNEW} for a schematic.) Moreover, from Lemma~\ref{lowerbound}, we have that $M_{N-1}\leq N-1-1=N-2.$ Therefore, we have $M_N\leq M_{N-1}\leq N-2$, so \[C=2M_N+1\leq 2(N-2)+1=2N-3.\] We rearrange this to conclude that \[N\geq \frac{C+3}{2}\geq \lceil\frac{C}{2}\rceil+1,\] as desired.
\end{proof}

\begin{proposition}\label{prop:distboundssingleisotopyclass}
    Let $S$ be a closed surface of genus at least two. Let $\beta$ and $\gamma$ be nonseparating curves isotopic to a curve $\alpha$. Then, 
    \[ \lceil \frac{\max\{C_{\gamma}(\beta), C_{\beta}(\gamma)\}}{2} \rceil + 1 \leq \dfiber(\beta,\gamma)\leq \min\{C_{\gamma}(\beta),C_{\beta}(\gamma)\}+1.\]
\end{proposition}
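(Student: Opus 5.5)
The proof splits into the two inequalities, following the pattern of Proposition~\ref{prop:distboundssep}. The only change is that crossing numbers need not be symmetric for nonseparating curves, and Lemma~\ref{lemma:symint} is not available.

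\textbf{Lower bound.} Take a geodesic $\beta=\beta_0,\beta_1,\ldots,\beta_N=\gamma$ in $\fine_\alpha(S)$. Let $L_n$ and $M_n$ be as in Lemma~\ref{lowerbound}, so $M_N\le N-1$. The set of levels of $\beta$ hit by $\hat\gamma$ is a set of integers containing $0$ (by Lemma~\ref{1imples0}'s analogue, or else empty). Is it a contiguous interval? Yes: $\hat\gamma$ is compact and must cross lower-level elevations to reach higher ones, as used in the proof of Lemma~\ref{lowerbound}. Hence $C_\beta(\gamma)\le 2M_N+1$.

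We then repeat the refinement from the separating case. The curve $\hat\beta_N$ lies on one side of $\hat\beta_{N-1}$, say between the level-$0$ and level-$1$ elevations of $\beta_{N-1}$, as in Figure~\ref{fig:sepcurveboundsNEW}. On that side its levels are bounded in absolute value by $M_{N-1}\le N-2$, while on the other side they are bounded by $M_N\le N-1$. So the number of levels is at most $(N-2)+(N-1)+1=2N-2$, which gives $C_\beta(\gamma)\le 2N-2$, that is, $N\ge \lceil C_\beta(\gamma)/2\rceil+1$.

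By symmetry, running the same argument with the geodesic reversed and the roles of $\beta,\gamma$ exchanged gives the bound for $C_\gamma(\beta)$. Taking the max finishes the lower bound. The main care needed is checking that the one-sided bound from $\hat\beta_{N-1}$ really applies in the nonseparating setting; the claim in the proof of Lemma~\ref{lowerbound} is stated for general curves, so I expect it to go through.

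\textbf{Upper bound.} Assume without loss of generality that $C_\beta(\gamma)\le C_\gamma(\beta)$. We build a path of length $C_\beta(\gamma)+1$ by surgering $\beta$ one outermost level at a time, as in Bowden--Hensel--Mann--Militon--Webb. Let $C=C_\beta(\gamma)$ and take the outermost level $\pm m$ of $\beta$ that $\hat\gamma$ meets. The intersections of $\hat\gamma$ with the elevations at that level cut off bigons in $\tilde S_\gamma$. These project to embedded disks in $S$, the outermost ones have disjoint interiors, and we push a parallel copy of $\beta$ off them. This produces a curve $\beta'$ isotopic to $\alpha$ that is disjoint from $\beta$, with $C_{\beta'}(\gamma)\le C-1$.

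Unlike the separating case, we cannot remove both extreme levels in one step. For nonseparating curves the two extreme levels have the same orientation, so the ends of the annulus lie on opposite sides and pushing outward on both ends at once is not possible. This is why the bound is $C$ rather than $C/2$. Iterating $C-1$ times gives a curve disjoint from $\gamma$ (once only level $0$ remains, we perturb off $\hat\gamma$ entirely). Counting edges gives length at most $C+1$.

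The main obstacle is verifying that each push produces a simple curve disjoint from the previous one that lowers the crossing number by exactly one level. This is where the possibility of $\hat\gamma$ hitting several elevations of the same level matters, and I would handle it via the disjoint-outermost-bigons observation from the proof of Proposition~\ref{prop:distboundssep}.
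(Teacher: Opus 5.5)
Your lower bound is correct and is essentially the paper's argument: Lemma~\ref{lowerbound} plus the fact that $\hat\beta_N$ cannot get past $\hat\beta_{N-1}$ on one side. Replacing the paper's case split by the single observation that the levels met by $\hat\gamma$ lie in $[-(N-2),N-1]$ (up to sign) is a tidy simplification. The one-sided bound you were worried about does hold here. A negative-level elevation of $\beta$ missed by $\hat\beta_{N-1}$ is a proper line running to the left end, so it lies entirely to the left of $\hat\beta_{N-1}$. One aside is wrong but unused. There is no nonseparating analogue of Lemma~\ref{1imples0}: compact elevations of intersecting nonseparating curves can be disjoint (Case~2 of Lemma~\ref{prop:nonsepcurve}), so the set of levels need not contain $0$.

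The upper bound has a genuine gap in the inductive step. You push one parallel copy $\beta'$ of $\beta$ across all outermost bigons at level $\pm m$ and conclude $C_{\beta'}(\gamma)\le C-1$.
\begin{itemize}
\item Since $\beta'$ is isotopic to and disjoint from $\beta$, it cobounds an annulus with $\beta$ and lies on one side of it. Each elevation $e'$ of $\beta'$ therefore sits on that same side of its paired elevation $e$ of $\beta$, at the same level.
\item The bigon that $\hat\gamma$ cuts off at an outermost elevation $e$ lies beyond $e$, away from the core. Whether that is the right or left side of $e$ depends on which side of $\beta$ the core-side region adjacent to $e$ projects to.
\item If the bigon lies on the side opposite $\beta'$, then $e'$ is on the core side of $e$. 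In that case $\hat\gamma$ still crosses $e'$, and level $m$ survives.
\item For nonseparating $\beta$, that core-side region covers the connected surface $S\setminus\beta$, which abuts $\beta$ from both sides. So a single level contains elevations of both kinds, and $\hat\gamma$ can meet both kinds at its outermost level. Then no single push clears the level.
\end{itemize}

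Your stated reason for going one level at a time, that the two extreme levels of a nonseparating curve have the same orientation, is also incorrect: orientation is not even constant along one level. The paper handles this explicitly. It separates the case where all outermost intersected elevations face the ends the same way (one homotopy) from the mixed case (two homotopies, one per direction). It then clears the two outermost levels together, at a cost of at most two new vertices, which is what produces the bound $C+1$.

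You need an accounting of this kind. The disjoint-outermost-bigons observation you cite does not supply it, because it concerns the interiors of the bigons, not which side of $\beta$ they lie on.
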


\begin{proof}
    We split the proof into the upper bound and the lower bound.
    
    \pit{Upper bound: $\dfiber(\beta,\gamma)\leq \min\{C_{\gamma}(\beta),C_{\beta}(\gamma)\}+1$} The proof of the upper bound is very similar to that of Proposition~\ref{prop:distboundssep}. Orient $\gamma$ and consider the oriented lifts to the cyclic cover. Consider the levels of $\gamma$ intersected by $\hat\beta.$ (Note that, unlike the separating case, these may not be symmetric around 0.) Without loss of generality, the elevations at the outermost levels may have orientations such that all right hand sides of outermost intersected levels point toward the ends of the cylinder, or only some of them do.

    If all right hand sides of outermost intersected levels point toward the ends (or all point away from the ends), we homotope $\beta$ to avoid all bigons formed by $\hat\beta$ and the outermost levels of $\gamma$ that $\hat\beta$ intersects, as in the proof of Proposition~\ref{prop:distboundssep} (though with the roles of $\beta$ and $\gamma$ swapped).

    If there is a mix of directions, two homotopies should occur: one for each direction (also as in the proof of Proposition~\ref{prop:distboundssep}). Since this resolves intersections with \emph{both} outermost levels of $\gamma$ that $\hat\beta$ intersects, we have decreased $C_\gamma(\beta)$ by two and added at most two vertices to our path.

    Performing this operation repeatedly leads to the construction of $C_\gamma(\beta)$ curves in a path between $\beta$ and $\gamma,$ resulting in a path of length $C_\gamma(\beta)+1.$ Since we can interchange $\beta$ and $\gamma$ throughout the proof, we have that $\dfiber(\beta,\gamma)\leq\min\{C_\gamma(\beta),C_\beta(\gamma)\}+1,$ as desired.

    \pit{Lower bound: $\lceil \frac{\max\{C_{\gamma}(\beta), C_{\beta}(\gamma)\}}{2} \rceil + 1 \leq \dfiber(\beta,\gamma)$} The proof of the lower bound is similar to that of the separating case. Suppose we have a geodesic of length $N$ from $\beta$ to $\gamma$ given by $\beta = \beta_0, \beta_1, \ldots, \beta_N = \gamma$. Let $L_n$ be the set of levels of $\beta$ that $\beta_n$ intersects, and let $M_n$ be the maximum of the absolute values of the elements of $L_n$. By Lemma \ref{lowerbound}, we have that $N \geq M_N+1$. 
    
     When the difference between the number of positive levels of $L_N$ with the number of negative levels is at most one (so $C_\beta(\gamma)=2M_N$ or $C_\beta(\gamma)=2M_N+1$), then the bounds in the proof of the separating case apply. 

     Otherwise, $C_\beta(\gamma)<2M_N\leq 2(N-1),$ so $N>\frac{C_\beta(\gamma)}{2}+1.$ We conclude that $N\geq \lceil\frac{C_\beta(\gamma)}{2}\rceil+1.$

    Since this proof works with roles of $\beta$ and $\gamma$ reversed, we choose the maximum of $C_{\beta}(\gamma)$ and $C_{\gamma}(\beta)$ to bound $N$ from below. 
\end{proof}

With the above in mind, we are ready to construct a sequence of isotopic curves with distance from the first curve increasing by one.

\begin{corollary} \label{cor:exactdistance}
For any surface with an essential curve $\alpha$, we have an explicit construction of a sequence $\{\alpha_n\}_{n=0}^\infty$ of curves isotopic to $\alpha$ such that $\dfiber(\alpha_0, \alpha_n) = n$.
\end{corollary}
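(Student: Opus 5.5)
The plan is to use the distance bounds in Propositions~\ref{prop:distboundssep} and \ref{prop:distboundssingleisotopyclass} (and Proposition~\ref{prop:disttorus} for tori), together with explicit point-push curves, to control crossing numbers exactly. The convenient pairs are those with $C_{\alpha_0}(\alpha_n)=C_{\alpha_n}(\alpha_0)=C$. In that case the lower and upper bounds in the nonseparating case read $\lceil C/2\rceil+1\le \dfiber\le C+1$. These do not pin down the distance, so I would not rely on crossing numbers alone. Instead I will build the sequence as an explicit path and use Lemma~\ref{lowerbound} for the lower bound.

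\textbf{Construction.} Set $\alpha_0=\alpha$ and let $\alpha_1$ be a disjoint parallel copy, so $\dfiber(\alpha_0,\alpha_1)=1$. Inductively, obtain $\alpha_{n+1}$ from a curve parallel to and disjoint from $\alpha_n$ by a point push along an arc crossing $\alpha_n$ once, as in Figure~\ref{fig:sepcurvebounds}. Choose the push so that in $\tilde S_{\alpha_0}$ the compact elevation $\hat\alpha_{n+1}$ reaches exactly one level further out than $\hat\alpha_n$ on one side. Concretely, the push follows a tail of $\alpha_n$ that already reaches level $n-1$ of $\alpha_0$ and extends it by one more arc in minimal position with $\alpha_0$. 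By construction $\alpha_{n+1}$ is disjoint from $\alpha_n$, so $\alpha_0-\alpha_1-\cdots-\alpha_n$ is a path in $\fiber$ and $\dfiber(\alpha_0,\alpha_n)\le n$.

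\textbf{Lower bound.} Lemma~\ref{lowerbound}, applied to any path $\alpha_0=\beta_0,\ldots,\beta_N=\alpha_n$, gives $M_N\le N-1$, where $M_N$ is the largest absolute level of $\alpha_0$ met by $\hat\alpha_n$. So it suffices to arrange that $\hat\alpha_n$ meets level $n-1$ (or $-(n-1)$) of $\alpha_0$. I would verify this inductively in the style of the computation for Table~\ref{table:table}. Each push is along a simple arc in minimal position with $\alpha_0$, so by Lemma~\ref{lemma:hitscompact} its lift through $\hat\alpha_0$ meets consecutive levels without repetition. Hence each push adds exactly one new level to the tail of $\hat\alpha_n$, giving $M(\alpha_n)=n-1$ and therefore $\dfiber(\alpha_0,\alpha_n)\ge n$.

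\textbf{Main obstacle.} The hard step is making the push arcs concatenate into one long simple arc that forms no bigons with $\alpha_0$. This is needed so that the tail's lift genuinely advances a level each time, and so that each new curve stays simple and disjoint from its predecessor. I would reuse the ``parallel copies of the blue arc'' trick from the proof of Lemma~\ref{lemma:achievedminimum}: wind the tail repeatedly around a fixed arc crossing $\alpha_0$ once, nesting successive copies, with the configuration chosen inside a genus-one subsurface containing $\alpha$ so it extends to any surface by adding handles. For separating $\alpha$ the same winding works, except the arc must return on the same side. In that case I would push around a curve meeting $\alpha$ twice, and check through Lemma~\ref{lemma:symint} that the level bookkeeping still gives $M=n-1$.
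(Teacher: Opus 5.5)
Your reduction is valid and genuinely different from the paper's. Consecutively disjoint curves $\alpha_0,\alpha_1,\dots$ with $\hat\alpha_n$ meeting level $\pm(n-1)$ of $\alpha_0$ would give $\dfiber(\alpha_0,\alpha_n)=n$ by Lemma~\ref{lowerbound}. But such a sequence realizes the equality case $M_{n+1}=M_n+1$ of Lemma~\ref{lowerbound} at every step. Producing it is the whole content, and the proposal does not supply it.

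As written, the inductive step fails. A point push of a parallel copy of $\alpha_n$ along an arc crossing $\alpha_n$ gives a curve that meets $\alpha_n$, so ``by construction $\alpha_{n+1}$ is disjoint from $\alpha_n$'' is false. The proof of Lemma~\ref{lowerbound} shows what is forced instead. $\hat\alpha_{n+1}$ lies between $\hat\alpha_n$ and a level $\pm1$ elevation $E$ of $\alpha_n$, and it can reach level $M_n+1$ only by running inside the translate of $\alpha_n$'s tail that is based on $E$. In the surface, the new winding must be threaded back through the mouth of $\alpha_n$'s own tail and run inside it; lengthening the push arc past the tip of that tail otherwise meets the body of $\alpha_n$. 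You would then have to check that this gives a simple curve disjoint from $\alpha_n$ for every $n$, e.g.\ via the nesting of the tubes in the universal cover of a neighborhood of the push curve. The ``parallel copies of the blue arc'' device from Lemma~\ref{lemma:achievedminimum} does not give this. It makes the total push arc simple and bigon-free with the fixed curve, but successive stages of that construction are not disjoint from one another.

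The separating case is not handled by ``check through Lemma~\ref{lemma:symint}''. There, Lemma~\ref{lemma:symint} and Proposition~\ref{prop:distboundssep}, combined with your path of length $n$, force $\hat\alpha_n$ to meet exactly $2n-2$ levels, balanced about $0$. So each step must add two levels while $M$ grows by only one. A single tail gaining one level per push cannot do this, and the threading above is unavailable. A tail crossing a separating $\alpha_0$ comes back to the collar of $\alpha_n$ on the side opposite the mouth of the tail, so a body-plus-one-tail curve crosses $\alpha_0$ at most twice.

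The paper avoids all of this because it never needs $\alpha_n\cap\alpha_{n+1}=\emptyset$. For each $n$ separately, in the nonseparating case it takes a curve whose crossing numbers make the two bounds of Proposition~\ref{prop:distboundssingleisotopyclass} coincide (the pairs $(2n-1,n)$ from Lemma~\ref{lemma:achievedminimum}). In the separating case it takes one with $C_\gamma(\alpha_n)\in\{2n-2,2n-3\}$ and uses the exact formula of Proposition~\ref{prop:distboundssep}. The upper bound comes from the surgery paths in those propositions. Your route would yield more (a geodesic ray), but only once the extremal construction is actually carried out.
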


\begin{proof}
    We split our proof into two cases: $\alpha$ is nonseparating or $\alpha$ is separating.

    \pit{Nonseparating} Let $\beta$ and $\gamma$ be isotopic nonseparating curves and define $C=\max\{C_\beta(\gamma),C_\gamma(\beta)\}$. We know from Lemma~\ref{prop:nonsepcurve} that $\lfloor \frac{\max\{C_{\gamma}(\beta), C_{\beta}(\gamma)\}}{2}\rfloor +1   \leq \min\{C_{\gamma}(\beta), C_{\beta}(\gamma)\}$ and from (the proof of) Lemma~\ref{lemma:achievedminimum} that we can achieve this bound by point-pushing $\beta$ to create $\gamma.$ In fact, if $C$ is odd, then Proposition~\ref{prop:distboundssingleisotopyclass} gives us that \[\min\{C_\gamma(\beta),C_\beta(\gamma)\}+1=(\lfloor\frac{C}{2}\rfloor +1)+1=\lceil\frac{C}{2}\rceil+1 \leq \dfiber(\beta,\gamma)\leq \min\{C_\gamma(\beta),C_\beta(\gamma)\}+1.\] We conclude that the constructed curves would have distance $\min\{C_\gamma(\beta),C_\beta(\gamma)\}+1$. We can choose $\beta$ and $\gamma$ to make this quantity equal to $n.$

    We then consider a homeomorphism $\varphi$ of the surface taking $\beta$ to $\alpha=\alpha_0$ and set $\alpha_n=\varphi(\gamma).$ 

    \pit{Separating} The separating case is much more straightforward. Let $\gamma$ be a curve isotopic to $\alpha$; this will be $\alpha_0$. We will now construct $\alpha_n.$ By Proposition~\ref{prop:distboundssep}, it suffices to point push $\gamma$ to create a curve $\alpha_n$ such that $C_\gamma({\alpha_n})=2(n-1)$ or $C_\gamma({\alpha_n})=2n-3.$ This can be accomplished by point-pushing $\gamma$ repeatedly around a curve that intersects $\gamma$ twice transversely (in minimal position). 
    \end{proof}

\pit{The torus case} For completeness, we mention the corresponding result for the single-isotopy-class fine curve graph of the torus. The cyclic cover is much simpler in this situation, and the crossing number as we have defined it still applies. The upper bound on distance between two isotopic curves is Lemma 4.5 of Bowden–Hensel–Mann–Militon–Webb \cite{BHMMW}. We note that the existing result is only an upper bound to account for defining the edges of the fine curve graph of a torus to connect curves that are disjoint or intersect exactly once. However, we take the fine curve graph of the torus to have \emph{only edges that correspond to disjointness.}

Let $\alpha$ be an essential simple closed curve in the torus $T$ and let $\tilde S_\alpha$ be the annular (cyclic) cover corresponding to $[\alpha]\in\pi_1(T).$ Define crossing number as above.

We briefly mention the symmetry of crossing number for the torus.

\begin{lemma}\label{lemma:torussym}
    Let $\beta,\gamma\in[\alpha]$ be curves in the torus. Then $C_\beta(\gamma)=C_\gamma(\beta).$
\end{lemma}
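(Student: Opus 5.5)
The plan is to exploit how simple the cyclic cover of the torus is. Here $\tilde S_\alpha$ is an open annulus $S^1\times\mathbb{R}$, and every elevation of $\gamma$ is compact. Indeed, since $\pi_1(T)=\mathbb{Z}^2$ is abelian, the subgroup $\langle[\alpha]\rangle$ is normal, so $p^{-1}(\gamma)$ consists of the $\mathbb{Z}$-translates of $\hat\gamma$ under the deck group. These translates are disjoint essential circles, linearly ordered along the $\mathbb{R}$ direction. Thus the tree $T_\gamma$ is a line, and each level consists of exactly one elevation $\tau^k(\hat\gamma)$, where $\tau$ is the generator of the deck group (translation by the other generator of $\pi_1(T)$). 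The same description holds for $\beta$, with the same $\tau$. We orient things so that $\tau$ moves to the right for both curves, so that the levels $k$ of $\beta$ and of $\gamma$ are $\tau^k(\hat\beta)$ and $\tau^k(\hat\gamma)$.

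With this in hand, $C_\gamma(\beta)$ is the number of integers $k$ with $\hat\beta\cap\tau^k(\hat\gamma)\neq\emptyset$, and $C_\beta(\gamma)$ is the number of $k$ with $\hat\gamma\cap\tau^k(\hat\beta)\neq\emptyset$. Applying the deck homeomorphism $\tau^{-k}$ gives
\[\hat\beta\cap\tau^k(\hat\gamma)\neq\emptyset \iff \tau^{-k}(\hat\beta)\cap\hat\gamma\neq\emptyset.\]
So $k\mapsto -k$ is a bijection between the levels of $\gamma$ met by $\hat\beta$ and the levels of $\beta$ met by $\hat\gamma$, and therefore $C_\gamma(\beta)=C_\beta(\gamma)$. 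The convention $C_\beta(\beta)=0$ and the disjoint case need no separate treatment. If $\beta\cap\gamma=\emptyset$, the compact elevations might still fail to meet, and then both counts are computed by the same bijection anyway.

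The main point to justify is the labeling. We need the level-$k$ elevation of $\gamma$ to be exactly $\tau^k(\hat\gamma)$, with a consistent sign convention for both curves, and not, say, $\tau^{-k}$ for one curve and $\tau^{k}$ for the other. The plan is to argue as follows. Since $\tau$ preserves the orientation of $\tilde S_\alpha$ and the two ends, it moves every essential circle toward the same end. Adjacency in $T_\gamma$ is then just consecutive order along $\mathbb{R}$, because the region between $\tau^k(\hat\gamma)$ and $\tau^{k+1}(\hat\gamma)$ is a complementary component. This gives the labeling for each curve with the same left/right convention, so the sign flip in the bijection is harmless. I expect this bookkeeping to be the only delicate step; everything else is the equivariance computation above.
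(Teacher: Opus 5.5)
Your proposal is correct and is essentially the paper's argument. The paper labels the lifts $\gamma_i$ and $\beta_i$ by integers and uses that passing from $\beta_i$ to $\beta_{i+1}$ shifts the intersected lifts of $\gamma$ by one; this is your deck-translation identity $\hat\beta\cap\tau^k(\hat\gamma)\neq\emptyset\iff\tau^{-k}(\hat\beta)\cap\hat\gamma\neq\emptyset$. Your version avoids the paper's implicit use that the intersected lifts form a consecutive block, and the sign-convention bookkeeping you flag as delicate is not actually needed: each level is a single elevation, so both counts depend only on which elevations are met, not on how they are labeled.
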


\begin{proof}
    Suppose $C_\gamma(\beta)=n.$ We will calculate $C_\beta(\gamma)$ by counting how many lifts of $\beta$ in $\tilde S_\alpha$ a chosen lift of $\gamma$ intersects. Let $\hat\gamma$ be this chosen lift.

    We can number the lifts of $\gamma$ by integers such that any two lifts that cobound an annulus have labels differing by 1. (By orienting $\gamma$, and therefore orienting its lifts, we can increase the label by 1 each time we consider the next lift to the right.) Let $\hat\gamma=\gamma_n.$ 

    We can similarly number the lifts of $\beta$, and we let $\beta_1$ be the lift that intersects $\gamma_1,\ldots,\gamma_n.$ Then $\beta_2$ intersects $\gamma_2,\ldots,\gamma_{n+1}$ and $\beta_0$ intersects $\gamma_0,\ldots,\gamma_{n-1}.$ Overall, since the lifts of $\gamma$ intersected $\beta_i$ and $\beta_{i+1}$ differ by adding one to each subscript, we obtain that precisely $\beta_1,\ldots,\beta_n$ intersect $\gamma_n=\gamma,$ as desired.
\end{proof}

To reflect this symmetry, we denote the crossing number between $\beta$ and $\gamma$ by $C(\beta,\gamma).$

\begin{proposition}\label{prop:disttorus}
    Let $\beta$ and $\gamma$ be curves isotopic to a curve $\alpha$ in the torus. Then 
    \[ \dfiber(\beta,\gamma)= C(\beta,\gamma)+1.\]
\end{proposition}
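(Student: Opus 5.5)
We prove the two inequalities $\dfiber(\beta,\gamma)\le C+1$ and $\dfiber(\beta,\gamma)\ge C+1$ separately, writing $C=C(\beta,\gamma)$ (symmetric by Lemma~\ref{lemma:torussym}). The torus cyclic cover $\tilde S_\alpha$ is an open annulus $S^1\times\mathbb{R}$. Every lift of $\gamma$ is compact and isotopic to the core, so the lifts are linearly ordered and indexed $\gamma_i$, $i\in\mathbb{Z}$, with the deck transformation $\tau$ acting by $\gamma_i\mapsto\gamma_{i+1}$. Each lift of $\gamma$ is then an entire level, and $C_\gamma(\beta)$ is simply the number of lifts $\gamma_i$ that $\hat\beta$ meets. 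Since $\hat\beta$ is connected, these lifts form a consecutive block.

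\textbf{Upper bound.} We adapt Lemma~4.5 of \cite{BHMMW}, with the only change being that every edge must come from disjointness. We induct on $C$.
\begin{itemize}
\item If $C=0$, the curves are disjoint and the distance is $1$. If $\beta=\gamma$ the convention gives distance $0$, which we treat separately.
\item If $C\ge 1$, suppose $\hat\beta$ meets $\gamma_a,\dots,\gamma_b$ with $b-a+1=C$. Push $\beta$ off slightly to one side to obtain a curve $\beta'$ disjoint from $\beta$, chosen so that $\hat\beta'$ stays within $\gamma_a,\dots,\gamma_{b-1}$.
\item To construct $\beta'$, follow a thin neighborhood of $\hat\beta\cup(\text{region left of }\gamma_b)$. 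Concretely, let $\beta'$ be the projection of the boundary of a small neighborhood of the complementary component of $\hat\beta\cup\bigcup_i\gamma_i$ lying left of $\gamma_b$. Because $\tau$ shifts everything equivariantly, this is well defined downstairs, simple, isotopic to $\alpha$, and disjoint from $\beta$.
\item Then $C(\beta',\gamma)=C-1$, and iterating gives a path of length $C+1$.
\end{itemize}
When $C=1$, the surgered curve is disjoint from $\gamma$, so the path $\beta-\beta'-\gamma$ has length $2$, matching $C+1$.

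\textbf{Lower bound.} We adapt Lemma~\ref{lowerbound}, which in the torus setting becomes cleaner because all elevations are compact and nested. Take a geodesic $\beta=\beta_0,\beta_1,\dots,\beta_N=\gamma$, and let $w_n$ be the number of lifts of $\beta$ that $\hat\beta_n$ meets. The claim is $w_n\le n-1$ for $n\ge1$.
\begin{itemize}
\item For $n=1$, $\hat\beta_1$ is disjoint from every lift of $\beta$, so $w_1=0$.
\item For the inductive step, $\hat\beta_{n+1}$ is disjoint from all lifts of $\beta_n$. It therefore lies in a strip between two consecutive lifts $\hat\beta_n$ and $\tau\hat\beta_n$. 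That strip meets only lifts of $\beta$ hit by $\hat\beta_n\cup\tau\hat\beta_n$, which is a consecutive block of at most $w_n+1$ lifts.
\item Subtle point: the strip is bounded by two translates, so naively one gets $w_n+1$ from the union. More precisely, $\hat\beta_{n+1}$ lies in the strip, so it is sandwiched, and a lift of $\beta$ it meets must pass into the strip. Since lifts of $\beta$ are compact circles, such a lift lies between the extreme lifts hit by the two boundaries, or is itself disjoint from both boundaries.
\end{itemize}

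The hardest case is the last one: a lift of $\beta$ lying entirely inside the strip between $\hat\beta_n$ and $\tau\hat\beta_n$. I would handle it by counting carefully. The block hit by $\hat\beta_n$ is $[a,a+w_n-1]$ and that hit by $\tau\hat\beta_n$ is $[a+1,a+w_n]$, so the strip contains lifts with indices in $[a,a+w_n]$, i.e. $w_n+1$ of them. This gives $w_{n+1}\le w_n+1\le n$, which is exactly what is needed; the base case $n=1$ must be treated separately since $w_1=0$ carries no block. Once the claim holds, $C=w_N\le N-1$ gives $N\ge C+1$. The delicate step is verifying that the strip contains no lift outside $[a,a+w_n]$ when $w_n=0$. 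In that case the strip between $\hat\beta_n$ and $\tau\hat\beta_n$ contains exactly one lift of $\beta$, which gives the base step $w_2\le1$.
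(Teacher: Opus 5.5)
Your overall plan is the paper's. The lower bound is the paper's strip argument: a lift of $\beta_{n+1}$ is trapped between consecutive lifts $\hat\beta_n$ and $\tau\hat\beta_n$, so the lifts of $\beta$ it can meet form a block of at most $w_n+1$. Your bookkeeping there, including the case $w_n=0$, is correct. The upper bound is the Bowden--Hensel--Mann--Militon--Webb surgery, which the paper cites rather than reproves.

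The gap is in your explicit surgery step, which as written does not produce a curve with the properties you claim. The phrase ``the complementary component of $\hat\beta\cup\bigcup_i\gamma_i$ lying left of $\gamma_b$'' is ill-defined, since there are many such components. Under the reading ``boundary of a thin neighborhood of $\hat\beta\cup(\text{region left of }\gamma_b)$'', you get a curve just to the right of the upper envelope of $\hat\beta\cup\gamma_b$. That curve lies strictly between $\gamma_b$ and $\gamma_{b+1}$, so it misses $\gamma$ entirely. If it were also disjoint from $\beta$, you would get $\dfiber(\beta,\gamma)\le 2$ for every pair, contradicting your own lower bound. Indeed, when $C\ge 2$ the translate $\tau\hat\beta$ meets both $\gamma_b$ and $\gamma_{b+1}$, so it must cross that curve. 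For the same reason, ``$\tau$ shifts everything equivariantly'' does not give disjointness from $\beta$. What is needed is that $\hat\beta'$ lies in an open strip between two consecutive lifts of $\beta$, and that depends on pushing in the correct direction. Also, the push is not ``slight'': it must clear every excursion of $\hat\beta$ past $\gamma_b$.

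The repair is as follows. Let $Y$ be the component of $\tilde S_\alpha\setminus(\hat\beta\cup\gamma_b)$ containing the left end, and let $\hat\beta'\subset Y$ be an essential simple closed curve close to the frontier of $Y$. Since $\tau^{-1}\hat\beta$ meets exactly $\gamma_{a-1},\dots,\gamma_{b-1}$, it is disjoint from $\gamma_b$. Hence both $\hat\beta$ and $\gamma_b$ lie strictly to the right of $\tau^{-1}\hat\beta$ and of $\gamma_{a-1}$. Because the frontier of $Y$ is contained in $\hat\beta\cup\gamma_b$, a close enough $\hat\beta'$ also lies to the right of $\tau^{-1}\hat\beta$ and $\gamma_{a-1}$. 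So $\hat\beta'$ lies in the open strip between $\tau^{-1}\hat\beta$ and $\hat\beta$, and is therefore disjoint from its own translates and from every lift of $\beta$. Its projection $\beta'$ is then a simple closed curve isotopic to $\alpha$ and disjoint from $\beta$. Since $\hat\beta'$ lies between $\gamma_{a-1}$ and $\gamma_b$, it meets only lifts among $\gamma_a,\dots,\gamma_{b-1}$, so $C(\beta',\gamma)\le C-1$. With this step justified, your induction on $C$ goes through.
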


\begin{proof}
    Since $\dfiber(\beta,\gamma)\geq C(\beta,\gamma)+1$ was proven in Lemma 4.5 of \cite{BHMMW}, it remains to show that $\dfiber(\beta,\gamma)\leq C(\beta,\gamma)+1.$
    
    We prove this result by induction on $\dfiber(\beta,\gamma).$ In the base case, $\dfiber(\beta,\gamma)=1,$ so $\beta$ and $\gamma$ are disjoint and therefore $C=0,$ satisfying the inequality.

    As an inductive hypothesis, suppose that for any pair of curves $\beta$ and $\gamma$ with $\dfiber(\beta,\gamma)= n-1,$ we have $n-1=\dfiber(\beta,\gamma)\geq C(\beta,\gamma).$

    Suppose now that $\dfiber(\beta,\gamma)=n,$ so we can find a geodesic path $\beta=\beta^0-\beta^1-\cdots-\beta^n=\gamma.$ Let $\hat\gamma$ be a lift of $\gamma$; we aim to count the number of elevations of $\beta$ it intersects. We know that $\hat\gamma$ is between two elevations of $\beta^{n-1}$---call these $\beta^{n-1}_1$ and $\beta^{n-1}_2$---and can only intersect the elevations of $\beta$ between (without loss of generality) the minimum elevation $\beta^{n-1}_1$ intersects and the maximum elevation $\beta^{n-1}_2$ intersects. Since the minimum and maximum elevations differ by at most $(n-1)-1+1=n-1,$ we have that $\gamma$ can intersect at most $n-1$ levels of $\beta.$ We conclude that $\dfiber(\beta,\gamma)\leq C(\beta,\gamma)+1,$ as desired.
\end{proof}

\bibliographystyle{plainurl} 
\bibliography{mainbib}

\end{document}